\author{Gregor Kemper}
\address{Technische Universit\"at M\"unchen, Zentrum Mathematik - M11,
Boltzmannstr. 3, 85748 Garching, Germany}
\email{kemper@ma.tum.de}
\title{Quotients by Connected Solvable Groups}
\subjclass[2010]{14L24, 13A50, 14L30}
\keywords{Solvable groups, unipotent groups, geometric quotient,
  algorithmic invariant theory}
\begin{document}

\begin{abstract}
  This paper introduces the notion of an excellent quotient, which is
  stronger than a universal geometric quotient. The main result is
  that for an action of a connected solvable group $G$ on an affine
  scheme $\Spec(R)$ there exists a semi-invariant~$f$ such that
  $\Spec(R_f) \to \Spec\bigl((R_f)^G\bigr)$ is an excellent
  quotient. The paper contains an algorithm for computing~$f$ and
  $(R_f)^G$. If $R$ is a polynomial ring over a field, the algorithm
  requires no Gr\"obner basis computations, and it also computes a
  presentation of $(R_f)^G$. In this case, $(R_f)^G$ is a complete
  intersection. The existence of an excellent quotient extends to
  actions on quasi-affine schemes.
\end{abstract}

\maketitle

\section*{Introduction} \label{sIntro}%

In the theory of connected algebraic groups, two cases stand out as
being well understood: reductive groups and solvable groups. While the
invariant theory of reductive groups is well-behaved and, in many
aspects, well understood, this is not the case for solvable and, in
particular, unipotent groups. For example, invariant rings of
unipotent groups need not be finitely generated, and even even if they
are, categorical quotients need not exist (see
\mycite[Example~4.10]{Ferrer:Santos:Rittatore})). However, a result of
\mycite{Rosenlicht:63} tells us that any variety $X$ with an action of
an algebraic group has a dense open subset $U \subseteq X$ that admits
a geometric quotient. A constructive version, involving huge Gr\"obner
basis computations, can by found in
\mycite{Kemper.Rosenlicht.07}. This raises the question if more can be
said for actions of special classes of groups, and if computations
become easier for such groups. This brings us back to the case of
unipotent groups, for which some further reaching results have indeed
been obtained. In fact, quite a few authors have studied invariant
theory of unipotent groups, e.g. \mycite{Hochschild:Mostow:73},
\mycite{Grosshans:83},
\citename{Fauntleroy:85}~[\citenumber{Fauntleroy:85},
\citenumber{Fauntleroy:88}], and \mycite{Berczi:Doran:Hawes:Kirwan};
but the papers on the subject that are relevant in our context are
\citename{Greuel:Pfister:1993}~[\citenumber{Greuel:Pfister:1993},
\citenumber{Greuel:Pfister:1998}] and \mycite{Sancho:2000}. Among
other results, these papers contain the following key statement: If a
connected unipotent group acts on $X$, there is a nonzero
invariant~$f$ such that $X_f$ admits a geometric quotient $X_f \to Y$.
(More specifically, in [\citenumber{Greuel:Pfister:1993}] $X$ is a
quasi-affine scheme over a field of characteristic~$0$ and the authors
also show that $X_f \cong \AA^n \times Y$ as schemes over $Y$, while
in ~[\citenumber{Sancho:2000}] $X$ is an affine scheme over a field of
any characteristic; but see \rref{2rGPS} below about these
statements.)
 
The above statement of \citename{Greuel:Pfister:1993} leads to the
definition, made in this paper as \dref{2dExcellent}, of an
``excellent quotient,'' which is essentially a universal geometric
quotient $X \to Y$ with the additional property that
$X \cong F \times Y$ as schemes over $Y$, with $F$ another
scheme. Unsurprisingly, an excellent quotient is better than a
geometric quotient. For example, an excellent quotient implies the
existence of a {\em cross section} $Y \to X$ (meaning that the
composition $Y \to X \to Y$ is the identity), and if $X = \Spec(R)$
and $Y = \Spec(R^G)$, then the cross section means that $R^G$ is the
image of a ring map from $R$ to $R$. So the invariant ring tends to
have exceptionally few generators with an easy way of computing them.

This paper goes beyond unipotent groups by considering connected
solvable groups. This is of interest not only because solvable groups
naturally extend the class of unipotent groups, but also because if
$G$ is a connected algebraic group acting on an affine variety $X$ and
$B$ is a Borel subgroup, then $K[X]^G = K[X]^B$ (see
\mycite[Exercise~21.8]{Humphreys}); so computing invariant rings of
connected solvable groups means computing invariant rings of all
connected groups. The main results of the paper, to be found in detail
in \tref{5tMain} and \rref{5rQuasiaffine}, have already been stated in
the abstract above. To put them in the context of the existing
literature, it should be mentioned that already
\mycite[Theorem~10]{Rosenlicht:1956} showed that the quotient
$X \to Y$ by a connected solvable group, viewed as a rational map, has
a cross section. However, working only with rational maps, he did not
consider geometric quotients in the modern setup, and his proof is far
from constructive. \mycite[Theorem~3]{Popov:2016} proved that if a
connected solvable group $G$ acts on an irreducible algebraic variety
$X$ over an algebraically closed field, then $X$ has a $G$-stable
dense open subset that admits an excellent quotient. Again, the result
is not constructive.

Thus the results of this paper extend the earlier results mentioned
above in generality ($X$ need not be integral, the ground ring $K$
need not be a field), in scope (solvable groups instead of unipotent
groups), and because the results are fully
algorithmic. (\mycite{Sancho:2000} presents algorithms for the
additive group and gives ideas towards algorithms for unipotent
groups.)  The result about complete intersections (see
\tref{5tMain}\eqref{5tMainB}) seems to be entirely new.

It might seem that generalizing from unipotent to solvable groups is
meaningless since a solvable group consists of a unipotent group $U$
and a torus on top, for which the invariant theory is easy and
harmless. However, for the group $U$, an excellent quotient (or even a
categorical quotient) only exists after passing to $\Spec(R_f)$
with~$f$ an invariant, and for most choices of such an~$f$, the torus
will not act on $R_f^U$. In fact, the most technically involved part
of this paper is the proof that~$f$ can be chosen as a semi-invariant
of the torus, and that such a choice can be made in the general
situation assumed here and at a low computational cost. \\

The first section of this paper is devoted to actions of the additive
group on an affine scheme. Such actions have been studied in various
papers, e.g. \mycite{Tan:89}, \mycite{essen},
\mycite{Derksen.Kemper06}, \mycite{Freudenburg:2006}, and
\mycite{Tanimoto:2007}. Although the main ideas of the section are
already present in these papers (particularly
in~[\citenumber{Tanimoto:2007}]), none of them reaches the level of
generality we require: a general ring, of any characteristic, as
ground ring, and actions on possibly non-integral schemes. \sref{1sGa}
introduces a variant of the notion of a ``local slice'' and gives a
simplified algorithm for computing it. The main result,
\tref{1tSlice}, is the algebraic way of saying that
$\Spec(R_f) \to \Spec(R_f^\Ga)$ is an excellent quotient.

\sref{2sExcellent} introduces the notion of an excellent quotient and
studies some basic properties: An excellent quotient is a universal
geometric quotient, and excellent quotients can be put on top of each
other along a chain of normal subgroups.

Sections~\ref{3sAmbient} and~\ref{4sUnipotent} are rather technical
and address the question, mentioned above, how a local slice can be
found such that the denominator~$f \in R$ is a semi-invariant. The
setup is that the additive group appears as a normal subgroup of an
ambient group, which in \sref{4sUnipotent} is assumed to be connected
and solvable. More precisely, the ambient group is assumed to be ``in
standard solvable form'' according to \dref{4dSolvable}, a hypothesis
that is automatically satisfied when the ground ring is an
algebraically closed field.

\sref{5sTorus} starts by dealing with actions of the multiplicative
group $\Gm$. The results bear an uncanny resemblance to those about
$\Ga$-actions. Putting all the strands together then yields the main
results of the paper (\tref{5tMain} and \rref{5rQuasiaffine}) and its
algorithmic version (\aref{5aSolvable}). The algorithm has been
implemented in the computer algebra system MAGMA~[\citenumber{magma}],
though not in complete generality. It turns out that the excellent
quotient by a connected solvable group has fibers that are isomorphic,
as a scheme without group structure or group action, to another
connected solvable group.

The final section contains a sort of a converse: If a group action
restricts to an open subset $X_f$ where the orbits are all of the type
described above, then the action is ``essentially solvable''
(\tref{6tConverse}).



\vspace{-1mm}
\subsection*{Acknowledgements}

This article benefited from interesting and helpful conversations with
Stephan Neupert, Vladimir Popov, Hanspeter Kraft, and Igor
Dolgachev. In particular, Vladimir Popov provided a proof of the last
statement of \lref{5lKraft} and then discovered that this had also
been shown by \mycite{Borel:1985}. He also made me aware of his
paper~[\citenumber{Popov:2016}] and pointed out that Theorem~2 in that
paper permits to extend my results to actions on arbitrary irreducible
varieties (see \rref{5rQuasiaffine}). The idea for the proof of
\tref{6tConverse}\eqref{6tConverseB} goes back to Hanspeter Kraft, and
Igor Dolgachev helped me with \exref{2exSL2}. My thanks go to all of
them.

\section{Additive group actions} \label{1sGa}

In this section we consider a morphic action of the additive group
$\Ga = \Spec(K[z])$ over a ring $K$ on an affine scheme $\Spec(R)$,
with $R$ a $K$-algebra. Such an action induces a homomorphism
$\map{\phi}{R}{R[z]}$ of $K$-algebras. If $s \in R$ and
$g := \phi(s)$, then
\begin{equation} \label{1eqGaAction}%
  g(0) = s \quad \text{and} \quad \phi(g(w)) = g(w + z),
\end{equation}
where in the second equality~$\phi$ is applied to the polynomial ring
$R[w]$ coefficient-wise. Let us call $\deg(s) := \deg_z(g)$ the
\df{degree} of~$s$. The invariant ring is $R^\Ga := \ker(\phi - \id)$.
If $g = \sum_{i=0}^d c_i z^i$ with $c_i \in R$, $c_d \ne 0$, it
follows that
\begin{equation} \label{1eqBinom}
  \phi(c_i) = \sum_{j=0}^{d-i} \binom{i + j}{i} c_{i+j} z^j.
\end{equation}
In particular, $c := c_d \in R^\Ga$ is an invariant. As we will see,
throughout the paper $c$ is the invariant that was denoted by~``$f$''
in the abstract and introduction. (Here we need the letter~$f$ for
another purpose.)  In fact, we can form the localization $R_c$ of $R$
with respect to the multiplicative set $\{1,c,c^2,\ldots\}$ and
extend~$\phi$ to a homomorphism $R_c \to R_c[z]$, which we will also
call~$\phi$ and which satisfies~\eqref{1eqGaAction}.

I learned the following argument, leading up to the proof of
\pref{1pDWR}, from \linebreak \mycite{Tanimoto:2007}. It is presented
here for the convenience of the reader and since our situation is
slightly different. Let $a \in R$ be another ring element and set
$f := \phi(a) \in R[z]$.  Since $c \in R^\Ga$ is the highest
coefficient of the above polynomial~$g$, we can perform division with
remainder in $R_c[z]$, which gives
\begin{equation} \label{1eqDWR}%
  f = q g + h
\end{equation}
with $q,h \in R_c[z]$, $\deg_z(h) \le d-1$ and $\deg_z(q) \le \deg(a) - d$
(where we assign the degree~$-\infty$ to the zero polynomial).
Using~\eqref{1eqGaAction}, we obtain
\begin{multline*}
  g(w + z) \bigl(q(w + z) - \phi(q(w))\bigr) + \bigl(h(w + z) -
  \phi(h(w))\bigr) = \\
  g(w + z) q(w + z) + h(w + z) - \phi(g(w)) \phi(q(w)) - \phi(h(w)) =
  f(w + z) - \phi(f(w)) = 0
\end{multline*}
Considering this as an equality of polynomials in $w$ and using the
$w$-degree, we conclude that $q(w+z) = \phi(q(w))$ and
$h(w+z) = \phi(h(w))$. Substituting $w = 0$ yields $\phi(q(0)) = q$
and $\phi(h(0)) = h$, so $\deg(q(0)) \le \deg(a) - d$ and
$\deg(h(0)) \le d-1$. We can write $q(0) = r/c^m$ and $h(0) = b/c^m$ with
$r,b \in R$, choosing the integer~$m$ large enough such that~$r$
and~$b$ have the same degrees as~$q(0)$ and~$h(0)$, respectively. Now
substituting $z = 0$ in~\eqref{1eqDWR} and possibly choosing~$m$ even
larger yields $c^m a = r s + b$. In summary, we obtain the following
``division with remainder principle'' in $R$:

\begin{prop} \label{1pDWR}%
  For $s,a \in R$, let~$c \in R^\Ga$ be the highest coefficient of
  $\phi(s)$. Then there exist $r,b \in R$ and $m \in \NN_0$ such that
  \[
  c^m a = r s + b, \quad \deg(r) \le \deg(a) - \deg(s), \quad
  \text{and} \quad \deg(b) \le \deg(s) - 1.
  \]
  If it is possible to perform addition, multiplication, and zero
  testing of elements of $R$, then~$m$, $r$ and~$b$ can be computed.
\end{prop}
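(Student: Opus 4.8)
The plan is to follow the computation already carried out in the text before the proposition statement, but to verify that it yields exactly the claimed degree bounds and to confirm that each step is effective. The essential tool is division with remainder by the polynomial $g = \phi(s) \in R_c[z]$. Since the highest coefficient of $g$ is the unit $c = c_d$ in the localization $R_c$, division with remainder is legitimate in $R_c[z]$ even though $R$ need not be a field, and it produces $q, h \in R_c[z]$ with $\deg_z(h) \le d-1$ and $\deg_z(q) \le \deg(a) - d$, as in~\eqref{1eqDWR}. The first task is then to descend this identity in $R_c[z]$ to an identity in $R$ itself.

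The key observation, already extracted in the preceding paragraph, is that $q(0)$ and $h(0)$ are themselves $\Ga$-related to $q$ and $h$ via $\phi(q(0)) = q$ and $\phi(h(0)) = h$; this is what lets me read off the degrees. Concretely, I would set $q(0) = r/c^m$ and $h(0) = b/c^m$ with $r, b \in R$, and the fact that multiplication by the invariant power $c^m$ clears denominators without changing degrees (for $m$ large) is what keeps $\deg(r) = \deg(q(0)) \le \deg(a) - \deg(s)$ and $\deg(b) = \deg(h(0)) \le \deg(s) - 1$. Here $\deg(s) = d$, so these are exactly the asserted bounds. Substituting $z = 0$ into~\eqref{1eqDWR} gives $f(0) = q(0) g(0) + h(0)$, i.e. $a = q(0) s + h(0)$, and clearing the common denominator $c^m$ (enlarging $m$ if necessary so that a single power works for all three of $r$, $b$, and the substituted identity simultaneously) yields $c^m a = r s + b$, which is the desired relation in $R$.

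For the effectivity claim I would argue that every operation in the derivation is realizable using only addition, multiplication, and zero-testing in $R$. The homomorphism $\phi$ is computable because the action is given by the data defining it; division with remainder in $R_c[z]$ reduces to arithmetic in $R$ together with inverting the known element $c$, which amounts to tracking numerators over powers of $c$; extracting the constant terms $q(0)$ and $h(0)$ is trivial; and determining an $m$ large enough to clear denominators is a finite search that terminates precisely because we can test whether a candidate numerator lies in $R$, i.e. whether the relevant element of $R_c$ has a representative with the required power of $c$ in the denominator. The one point requiring care, and the step I expect to be the main obstacle, is confirming that a \emph{single} exponent $m$ can be chosen to serve all three purposes at once---matching the degrees of $r$ and $b$ to those of $q(0)$ and $h(0)$ and simultaneously clearing the denominator in the substituted identity---so that the final equation $c^m a = r s + b$ holds on the nose in $R$ with the stated degree bounds intact. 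This is settled by taking $m$ to be the maximum of the finitely many exponents arising individually, after which no further difficulty remains.
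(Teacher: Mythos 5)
Your proposal is correct and follows essentially the same route as the paper: the paper's proof of this proposition is precisely the pre-statement derivation you invoke---division by $g=\phi(s)$ in $R_c[z]$ using invertibility of $c$, the equivariance identities $\phi(q(0))=q$ and $\phi(h(0))=h$ to transfer the degree bounds, substitution of $z=0$, and clearing denominators by a sufficiently large common power $c^m$. Your explicit attention to choosing a single $m$ (taking the maximum of the finitely many exponents, and enlarging it so that the numerators $r,b\in R$ have the same degrees as $q(0),h(0)$ in $R_c$) matches exactly the paper's ``possibly choosing $m$ even larger,'' and your effectivity sketch is consistent with the paper's claim.
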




\begin{defi} \label{1dSlice}%
  A \df{local slice} of degree~$d$ with denominator~$c$ is a
  noninvariant $s \in R \setminus R^\Ga$ of degree~$d$ with $c \in
  R^\Ga$ the highest coefficient of $\phi(s)$, such that every $a \in
  R_c$ with $\deg(a) < d$ lies in $R_c^\Ga$.
\end{defi}

The significance of this notion lies in the fact that if $s$ is a
local slice, then $b$ from \pref{1pDWR} is an invariant in $R_c^\Ga$.

\begin{rem} \label{1rSlice}%
  If $s \in R \setminus R^\Ga$ has minimal degree among all
  noninvariants, it is a local slice. This shows the existence of
  local slices if the action is nontrivial. If $R$ is a domain, the
  converse holds. So our definition of a local slice is consistent
  with the one from \mycite{Freudenburg:2006} and
  \mycite{Tanimoto:2007}, who only considered domains.
\end{rem}

If the characteristic of $K$ is~$0$ or a prime and if $R$ is reduced,
it is not hard to see from~\eqref{1eqBinom} that the degree~$d$ of a
local slice must be a power of the characteristic of $K$. In
particular, if $\ch(K) = 0$ and $R$ is reduced, an element is a local
slice if and only if it has degree~$1$. The following example shows
that local slices of degree $> 1$ occur.

\begin{ex} \label{1exSlice}%
  \begin{enumerate}
    \renewcommand{\theenumi}{\arabic{enumi}}
  \item Let $R = K[x,y]$ be a polynomial ring over a field of
    characteristic $p > 0$ and define $\map{\phi}{R}{R[z]}$ by
    \[
    \phi(x) = x + y z + z^p, \ \phi(y) = y.
    \]
    Then $s = x$ is a local slice of degree~$p$.
  \item With $K$ a ring of any characteristic, let $R = K[x,y]/(y^n)$
    with $n \ge 2$ and $\phi(x) = x + \overline{y} z$,
    $\phi(\overline{y}) = \overline{y}$. Then for $0 < d < n$, we see
    that $s = x^d$ is a local slice of degree~$d$ with denominator
    $c = \overline{y}^d$, since $R_c = \{0\}$. Perhaps more
    significantly, all local slices have nilpotent denominators, so
    $R_c$ is always the zero ring. \exend
  \end{enumerate}
  \renewcommand{\exend}{}
\end{ex}

Algorithms for finding a local slice (in the case that $R$ is a
finitely generated domain) were given by \mycite{Sancho:2000} and
\mycite{Tanimoto:2007}. The following algorithm for the same purpose
is simpler, and does not require $R$ to be a domain.

\begin{alg}[Computation of a local slice] \label{1aSlice} \mbox{}%
  \begin{description}
  \item[\bf Input] A nontrivial morphic $\Ga$-action on $\Spec(R)$ for
    a finitely generated $K$-algebra $R = K[a_1 \upto a_n]$, given by
    $\map{\phi}{R}{R[z]}$ as above.  We assume that it is possible to
    perform addition, multiplication, and zero testing of elements of
    $R$.
  \item[\bf Output] A local slice $s \in R$.
  \end{description}
  \begin{enumerate}
    \renewcommand{\theenumi}{\arabic{enumi}}
  \item \label{1aSlice1} For $i = 1 \upto n$, set $b_i = a_i$. Repeat
    steps~\ref{1aSlice2}--\ref{1aSlice3} until all $b_i$ have
    degree~$0$.
  \item \label{1aSlice2} Choose $s \in R$ as a noninvariant
    coefficient of one of the $\phi(b_i)$ such that $d := \deg(s)$
    becomes minimal. One can use~\eqref{1eqBinom} for determining
    $\phi(s)$. If $\ch(K) = 0$, then automatically $d = 1$, so~$s$ is
    a local slice and we are done.
  \item \label{1aSlice3} This step updates the~$b_i$. For
    $i = 1 \upto n$, find elements $r_i, b_i \in R$ and $m_i \in
    \NN_0$ such that
    \begin{equation} \label{1eqCai}%
      c^{m_i} a_i = r_i s + b_i \quad \text{and} \quad \deg(b_i) \le d
      -1
    \end{equation}
    according to \pref{1pDWR}.
  \end{enumerate}
\end{alg}

\begin{proof}[Proof of correctness of \aref{1aSlice}]
  The choice of~$s$ implies $d \le \deg(b_i)$ for all~$i$, so updating
  the $b_i$ in step~\ref{1aSlice3} strictly decreases their degree. So
  the algorithm will reach its termination point where all~$b_i$ have
  degree~$0$.

  Suppose this is the case and let $a \in R_c$ be of degree $< d$.
  There is a nonnegative integer~$k$ such that
  $a = c^{-k} F(a_1 \upto a_n)$ with $F \in K[x_1 \upto x_n]$ a
  polynomial. In the ring $R_c$, $s$ divides
  $F(a_1 \upto a_n) - F(c^{-m_1} b_1 \upto c^{-m_n} b_n)$
  by~\eqref{1eqCai}, so it also divides
  $a - c^{-k} F(c^{-m_1} b_1 \upto c^{-m_n} b_n)$. But this difference
  has degree $< d$, and since the highest coefficient of $\phi(s)$ is
  invertible in $R_c$, this implies that
  $a = c^{-k} F(c^{-m_1} b_1 \upto c^{-m_n} b_n)$, which is an
  invariant in $R_c^\Ga$.
\end{proof}

The following theorem shows why local slices are useful. For example,
by part~\eqref{1tSliceB}, generators of $R_c^\Ga$ can be determined
immediately if a local slice is known. While parts~\eqref{1tSliceA}
and~\eqref{1tSliceB} are essentially well known (at least in more
restricted settings), \eqref{1tSliceC} and~\eqref{1tSliceD} seem to be
entirely new.

\begin{theorem} \label{1tSlice}%
  For a nontrivial action of the additive group $\Ga$ over a ring $K$
  on an affine $K$-scheme $\Spec(R)$, given by a homomorphism
  $\map{\phi}{R}{R[z]}$, let~$s$ be a local slice of degree~$d$ with
  denominator $c \in R^\Ga$.
  \begin{enumerate}
  \item \label{1tSliceA} The homomorphism $R_c^\Ga[x] \to R_c$ sending
    the indeterminate~$x$ to~$s$ is an isomorphism. We write
    $\map{\psi}{R_c}{R_c^\Ga[x]}$ for the inverse isomorphism.
  \item \label{1tSliceB} The composition
    \[
    \pi\mbox{:}\ R_c \xlongrightarrow{\psi} R_c^\Ga[x]
    \xlongrightarrow{x \mapsto 0} R_c^\Ga
    \]
    is a homomorphism of $R_c^\Ga$-algebras with $\ker(\pi) = (s)$. In
    particular,~$\pi$ is surjective. For $a \in R_c$, $\pi(a)$ is
    given by
    \[
    \phi(a) = q \cdot \phi(s) + \pi(a)
    \]
    with $q \in R_c[z]$ (division with remainder).
  \item \label{1tSliceC} The composition
    \[
    R_c \stackrel{\phi}{\longrightarrow} R_c[z]
    \stackrel{\pi}{\longrightarrow} R_c^\Ga[z]
    \]
    (with~$\pi$ applied coefficient-wise) is injective and makes
    $R_c^\Ga[z]$ into an $R_c$-module that is generated by~$d$
    elements. In particular, if $d = 1$, then it is an isomorphism.
  \item \label{1tSliceD} Let $B$ be a ring with a homomorphism
    $R_c^\Ga \to B$. Then
    \[
    (B \otimes_{R_c^\Ga} R)^\Ga = B \otimes 1.
    \]
  \end{enumerate}
\end{theorem}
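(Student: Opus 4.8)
The plan is to pass at once to the localization $R_c$, in which the highest coefficient $c$ of $g := \phi(s)$ is a unit; note that then also $c^{-1} \in R_c^\Ga$, since $c$ is invariant. The single mechanism underlying every part is a leading-coefficient principle: $\phi$ is injective because $\phi(a)(0) = a$ by \eqref{1eqGaAction}, and for any $a = \sum_i u_i s^i$ with $u_i \in R_c^\Ga$ one has $\phi(a) = \sum_i u_i g^i$, whose coefficient of $z^{Nd}$ (with $N$ the top index and $d = \deg_z g$) is $u_N c^N$; as $c$ is a unit, this is nonzero whenever $u_N$ is.

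For \eqref{1tSliceA}, surjectivity of $R_c^\Ga[x] \to R_c$, $x \mapsto s$, follows by induction on $\deg(a)$: if $\deg(a) < d$ then $a \in R_c^\Ga$ by the defining property of a local slice (\dref{1dSlice}); otherwise \pref{1pDWR} gives $a = (r/c^m)s + b/c^m$ with $b/c^m \in R_c^\Ga$ and $\deg(r/c^m) \le \deg(a) - d < \deg(a)$, and $r/c^m$ is handled by induction. Injectivity is the transcendence of $s$ over $R_c^\Ga$: a relation $\sum_i u_i s^i = 0$ maps under the injective $\phi$ to $\sum_i u_i g^i = 0$, whose leading coefficient $u_N c^N$ forces $u_N = 0$. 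This fixes $\psi$, and \eqref{1tSliceB} is then formal: $\pi = (x \mapsto 0)\circ\psi$ is an $R_c^\Ga$-algebra map, its kernel is the preimage of $(x)$, namely $(s)$, and it is surjective since it is the identity on $R_c^\Ga$. Writing $a = \sum_i u_i s^i$ gives $\phi(a) = u_0 + g\cdot\sum_{i\ge1}u_i g^{i-1}$, so $u_0 = \pi(a)$ is precisely the remainder of $\phi(a)$ modulo $\phi(s)$.

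For \eqref{1tSliceC}, set $\Phi := \pi\circ\phi$ and give $R_c^\Ga[z]$ its $R_c$-module structure through $\Phi$. Injectivity again uses the principle above: for $a \ne 0$ the coefficient of $z^{Nd}$ in $\phi(a)$ is $u_N c^N \in R_c^\Ga$, which $\pi$ fixes, so $\Phi(a) \ne 0$. For the generation statement, \eqref{1eqBinom} and $\pi(c) = c$ give $\Phi(s) = \pi(g) = c z^d + \sum_{j<d}\pi(c_j)z^j$ with all coefficients in $R_c^\Ga$ and $c$ a unit. Solving for $z^d$ and multiplying by powers of $z$, an induction on $n$ expresses every $z^n$ as an $R_c$-combination of $1,z,\ldots,z^{d-1}$, so these $d$ elements generate. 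When $d = 1$ this single generator together with injectivity of $\Phi$ forces $\Phi$ to be an isomorphism.

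Finally, for \eqref{1tSliceD} I would use \eqref{1tSliceA} to identify $R_c$ with the polynomial ring $R_c^\Ga[x]$, so that base change along $R_c^\Ga \to B$ turns the invariant computation into one over $B[x]$, with induced $\Ga$-action sending $x \mapsto \phi(s)$, a polynomial of $z$-degree $d \ge 1$ whose leading coefficient is the unit $c$ (a unit in $B$, since $c^{-1} \in R_c^\Ga$). A degree-in-$z$ count then shows that any $p = \sum_{i=0}^N \beta_i x^i$ with $\beta_N \ne 0$ and $N \ge 1$ has $\phi(p)$ of $z$-degree $Nd > 0$, with leading coefficient $\beta_N c^N \ne 0$, hence $p$ is not invariant; so the invariants are exactly the constants $B \otimes 1$. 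I expect the main obstacle to lie in the bookkeeping of \eqref{1tSliceC}—correctly setting up the module structure through $\Phi$ and running the induction on powers of $z$—together with verifying in \eqref{1tSliceD} that the $\Ga$-action descends to the base change (which rests on $\phi$ being $R_c^\Ga$-linear); everything else reduces to the leading-coefficient principle made available by $c$ being a unit in $R_c$.
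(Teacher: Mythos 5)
Your proposal is correct and follows essentially the same route as the paper: localize so that $c$ becomes a unit, use \pref{1pDWR} and the local-slice property to obtain $R_c = R_c^\Ga[s]$, and settle parts~\eqref{1tSliceB}--\eqref{1tSliceD} by leading-coefficient-in-$z$ arguments. Your minor variations---deriving the remainder description in~\eqref{1tSliceB} by expanding $\phi(a)=\sum_i u_i g^i$ and invoking uniqueness of division with remainder, and in~\eqref{1tSliceC} solving for $z^d$ from $\Phi(s)=\pi\bigl(\phi(s)\bigr)$ instead of the paper's identity $f(g-s)=0$ and the relation $\bigl(g(x)-s\bigr)-(g-s)$ satisfied by~$z$---are cosmetic repackagings of the same computation.
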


\begin{proof}
  \begin{enumerate}
  \item[\eqref{1tSliceA}] Let $f \in R_c^\Ga[x]$ with $f(s) = 0$.
    Since $\phi$ is a homomorphism of $R_c^\Ga$-algebras, this implies
    $f(\phi(s)) = 0$, so $f = 0$ since the highest coefficient of
    $g := \phi(s) \in R_c[z]$ is invertible. This shows that the map
    is injective. To prove surjectivity, let $a \in R_c$. Applying
    \pref{1pDWR} to $R_c$ instead of $R$ yields $a = r s + b$ with
    $r,b \in R_c$ such that $\deg(b) < d$ and
    $\deg(r) \le \deg(a) - d < \deg(a)$. By \dref{1dSlice},
    $b \in R_c^\Ga$. Now using induction on $\deg(a)$ yields
    $a = f(s)$ with $f \in R_c^\Ga[x]$.
  \item[\eqref{1tSliceB}] The first statement follows
    from~\eqref{1tSliceA}. For the second statement, observe that the
    map that is claimed to be equal to~$\pi$ is a homomorphism of
    $R_c^\Ga$-algebras, since the remainder from division by $\phi(s)$
    has degree~$0$. Since $R_c = R_c^\Ga[s]$, it suffices to check the
    equality of the maps for $a = s$, which is immediate.
  \item[\eqref{1tSliceC}] Take $a \in R_c$ that is mapped to
    zero. By~\eqref{1tSliceA} we may write $a = f(s)$ with $f \in
    R_c^\Ga[x]$, so
    \[
    0 = \pi\bigl(\phi\bigl(f(s)\bigr)\bigr) = f\bigl(\pi(g)\bigr) =
    f\bigl(\pi(g - s) + \pi(s)\bigr)= f(g - s)
    \]
    since by~\eqref{1eqBinom} all coefficients of~$g - s$ have degree
    $< d$ and are therefore invariants in $R_c^\Ga$. Since
    $g - s \in R_c[z]$ has $z$-degree $d > 0$ and an invertible
    highest coefficient, we obtain $f = 0$. This establishes
    injectivity. The above equality also shows that
    $g - s = \pi(\phi(s))$ lies in the image. So~$z$ satisfies the
    polynomial $(g(x) - s) - (g - s)$, whose coefficients (as a
    polynomial in~$x$) lie in the image. This proves the second
    statement.
  \item[\eqref{1tSliceD}] By~\eqref{1tSliceA}, the element
    $1 \otimes s \in B \otimes_{R_c^\Ga} R := R'$ is algebraically
    independent over $B$, and $R' = B[1 \otimes s]$. By definition,
    $(R')^\Ga = \ker(\phi' - \id)$ with $\map{\phi'}{R'}{R'[z]}$
    obtained by tensoring~$\phi$. Let $a \in R'$ and write
    $a = \sum_{i=0}^k b_k (1 \otimes s)^i$ with $b_i \in B$,
    $b_k \ne 0$. With the given map $\map{\eta}{R_c^\Ga}{B}$ applied
    to $\RR_c^\Ga[z]$ coefficient-wise, we obtain
    \[
    \phi'(a) = \sum_{i=0}^k b_i (1 \otimes g)^i = \sum_{i=0}^k b_i
    \bigl(\eta(g - s) \otimes 1 + 1 \otimes s\bigr)^i.
    \]
    If $k > 0$, the coefficient of $z^{k d}$ of this is $b_k \eta(c)^k
    \otimes 1$, which is nonzero since $\eta(c)$ is invertible in
    $B$. So if $a \in (R')^\Ga$, then $k = 0$ and therefore $a \in B
    \otimes 1$. The reverse inclusion $B \otimes 1 \subseteq (R')^\Ga$
    is clear. \qed
  \end{enumerate}
  \renewcommand{\qed}{}
\end{proof}

\tref{1tSlice} has a geometric interpretation. In fact, the morphism
$\Spec(R_c) \to \Spec(R_c^\Ga)$ induced from the inclusion is a
excellent quotient by $\Ga$ with fibers $\Spec(K[x])$ according to
\dref{2dExcellent} in the following section (see \pref{2pExcellent}).

\section{Excellent quotients} \label{2sExcellent}%

In the following, $S$ is a scheme and all schemes, morphisms and fiber
products will be over $S$ unless stated otherwise.

\newcommand{\act}{\operatorname{act}}
\newcommand{\qu}{\operatorname{quo}}
\newcommand{\pt}{\operatorname{pt}}
\newcommand{\iso}{\operatorname{iso}}
\newcommand{\pr}{\operatorname{pr}}
\newcommand{\sect}{\operatorname{sect}}
\newcommand{\incl}{\operatorname{incl}}
\newcommand{\emb}{\operatorname{emb}}
\newcommand{\conj}{\operatorname{conj}}
\newcommand{\mult}{\operatorname{mult}}
\newcommand{\inv}{\operatorname{inv}}
\newcommand{\sm}[1]{\text{\scriptsize $#1$}}
\newcommand{\OX}[1]{\mathcal{O}_{#1}}
\newcommand{\gs}[1]{\Gamma(#1,\OX{#1})}

\begin{defi} \label{2dExcellent}%
  Let $G$ be a group scheme acting on a scheme $X$ by a morphism
  $\map{\act}{G \times X}{X}$. A morphism $\map{\qu}{X}{Y}$ of schemes
  is called an \df{excellent quotient} (of $X$ by $G$ with fibers $F$)
  if there is a faithfully flat scheme $F$ with a morphism
  $\map{\pt}{S}{F}$ (i.e., an $S$-valued point of $F$) and an
  isomorphism $\map{\iso}{F \times Y}{X}$ of schemes over $Y$ such
  that the following conditions hold.
  \begin{enumerate}
    \renewcommand{\theenumi}{\roman{enumi}}
  \item \label{2dExcellent1} With $\map{\pr_2}{G \times X}{X}$ the
    second projection, the diagram \vspace{-2mm}%
    \DIAGV{60}%
    {G \times X} \n{\Ear{\sm{\act}}} \n{X} \nn%
    {\Sar{\sm{\pr_2}}} \n{} \n{\Sar{\sm{\qu}}} \nn%
    X \n{\Ear{\sm{\qu}}} \n{Y}%
    \diag%
    commutes.
  \item \label{2dExcellent2} The composition
    \[
    G \times Y \xlongrightarrow{(\id_G,\pt,\id_Y)} G \times F \times Y
    \xlongrightarrow{(\id_G,\iso)} G \times X \xlongrightarrow{\act} X
    \]
    is surjective.
  \item \label{2dExcellent3} Let $Y' \to Y$ be a morphism of schemes,
    giving rise to morphisms $\map{\qu'}{X' := X \times_Y Y'}{Y'}$ and
    $\map{\act'}{G \times X'}{X'}$ by base change. Then the map
    $\gs{Y'} \to \gs{X'}$ induced by $\qu'$ has the ring $\gs{X'}^G$
    of $G$-invariant functions as its image, defined (in the usual
    way) as follows: An element $f \in \gs{X'}$, interpreted as an
    element of $\Hom_\ZZ(X',\AA^1)$ with
    $\AA^1 := \AA^1_\ZZ := \Spec(\ZZ[x])$, lies in $\gs{X'}^G$ if and
    only if the diagram \vspace{-2mm}%
    \begin{equation} \label{2eqInvariant}
      \begin{minipage}{0.8\linewidth}
        \DIAGV{60}%
        {G \times X'} \n{\Ear{\sm{\act'}}} \n{X'} \nn%
        {\Sar{\sm{\pr_2}}} \n{} \n{\Sar{\sm{f}}} \nn%
        {X'} \n{\Ear{\sm{f}}} \n{\AA^1}%
        \diag%
      \end{minipage}
    \end{equation}
    commutes.
  \end{enumerate}
\end{defi}

The following remark should provide a better understanding of
\dref{2dExcellent}.

\begin{rem} \label{2rExcellent}
  \begin{enumerate}
  \item \label{2rExcellentC} If $\map{\qu}{X}{Y}$ is an excellent
    quotient, the commutative diagram \vspace{-2mm}%
    \DIAGV{60}%
    {Y} \n{\Ear{\sm{(\pt,\id)}}} \n{F \times Y} \n{\Ear{\sm{\iso}}}
    \n{X} \nn%
    {} \n{} \n{\eseaR{\sm{\id}}} \n{\Sear{\sm{\pr_2}}}
    \n{\saR{\sm{\qu}}} \nn%
    {} \n{} \n{} \n{} \n{Y}%
    \diag%
    shows that the morphism
    $\map{\sect := \iso \circ (\pt,\id_Y)}{Y}{X}$ satisfies
    \begin{equation} \label{2eqSection}%
      \qu \circ \sect = \id_Y,
    \end{equation}
    so it is a \df{cross section} of the quotient. This implies that
    $X \to Y$ is surjective. Condition~\eqref{2dExcellent2} in
    \dref{2dExcellent} demands the surjectivity of
    \begin{equation} \label{2eqSurj}%
      G \times Y \xlongrightarrow{(\id,\sect)} G \times X
      \xlongrightarrow{\act} X.
    \end{equation}
    If $S$ is the spectrum of an algebraically closed field, this
    means that every $G$-orbit in $X$ meets the image of the cross
    section, or, equivalently, that the fibers of the quotient are
    precisely the orbits.
  \item \label{2rExcellentB} In \dref{2dExcellent}, the point~$\pt$
    and the isomorphism~$\iso$ only appear in
    Condition~\eqref{2dExcellent2}. It is not hard to show that if
    this condition holds for some choice of a point and an
    isomorphism, then it holds for all choices.
  \item \label{2rExcellentE} If $\map{\qu}{X}{Y}$ is an excellent
    quotient with fibers $F$, then all fibers of $S$-valued points of $Y$
    are isomorphic to $F$. This follows since $X$ and $F \times Y$ are
    isomorphic as schemes over $Y$.
  \item \label{2rExcellentF} Since $G$ acts on the fiber of every
    $S$-valued point of $Y$, it follows from~\eqref{2rExcellentE} that
    every such point affords a $G$-action on $F$. But these actions
    are in general different, so there is usually no $G$-action on $F$
    that makes the isomorphism $F \times Y \to X$ into a
    $G$-isomorphism.
  \item \label{2rExcellentD} A base change of an excellent quotient is
    again an excellent quotient (by the same group and with the same
    fibers). This follows directly from the definition and the fact
    that surjectivity is stable under base change (see
    \mycite[Proposition~4.32]{Goertz:Wedhorn:2010}).
  \item \label{2rExcellentG} Let $\map{\qu}{X}{Y}$ be an excellent
    quotient by a group $G$, and let $U \subseteq X$ be a $G$-stable
    open subscheme. With $\map{\sec}{Y}{X}$ the cross section,
    $V := \sect^{-1}(U) \subseteq Y$ is an open subscheme, and it is
    not hard to see that $U = \qu^{-1}(V)$.
    Therefore the restriction $\map{\qu|_U}{U}{V}$ arises from
    $X \to Y$ by base change. By~\eqref{2rExcellentD}, it is an
    excellent quotient of $U$ by $G$. \remend
  \end{enumerate}
  \renewcommand{\remend}{}
\end{rem}

The purpose of the following examples is to show how strong the notion
of an excellent quotient is.

\begin{ex} \label{2exSL2}%
  In this example we assume that $K$ is an algebraically closed field
  of characteristic $\ne 2$. Consider the action of $G = \PGL_2$ on
  $\SL_2$ by conjugation. The only invariant is given by the trace,
  and it is well known that restricting to the matrices with distinct
  eigenvalues gives a geometric quotient
  \[
  X := \bigl\{A \in \SL_2 \mid \tr(A) \ne \pm 2\bigr\} \to Y := K
  \setminus \{\pm 2\}.
  \]
  The quotient has a cross section, given by mapping $a \in Y$ to the
  matrix
  $\left(\begin{smallmatrix} 0 & -1 \\ 1 &
      a \end{smallmatrix}\right)$.
  Next we see that all fibers are isomorphic. Indeed, the fiber of
  $a \ne \pm 2$ consists of the matrices
  $\left(\begin{smallmatrix} x & y \\ z & a -
      x \end{smallmatrix}\right)$ satisfying
  \[
  0 = x^2 - a x + y z + 1 = \left(\frac{2 x - a}{2}\right)^2 + y z +
  \frac{4 - a^2}{4} = \frac{4 - a^2}{4}
  \left(\left(\frac{2x-a}{\sqrt{4 - a^2}}\right)^2 + \frac{4y}{4 -
      a^2} \cdot z + 1\right).
  \]
  The last form of the equation shows that all fibers are isomorphic
  to the $0$-fiber given by $x^2 + y z + 1$. So the quotient has
  extremely good properties, but we claim that is is not excellent.

  In fact, if it were excellent, then $X$ would be isomorphic, as a
  scheme over $Y$, to $F \times Y$, with $F$ the surface given by
  $x^2 + y z + 1$. Since $X$ is given by the equation
  $x^2 - a x + y z + 1$ (see above), a $Y$-isomorphism
  $F \times Y \cong X$ would imply that the surfaces {\em over the
    rational function field} $K(t)$ given by $x^2 + y z + 1$ and by
  $x^2 - t x + y z + 1$ are isomorphic. But they are not, and the
  reason for this was explained to me by Igor Dolgachev. First,
  homogenizing the equations defines two projective quadrics in
  $\PP^3_L$, which are not isomorphic since their discriminants are in
  different square classes. Second (and this is the hard part), it
  follows from a result by \mycite[Theorem~6]{Gizatullin:Danilov} that
  also the original affine surfaces cannot be isomorphic over $K(t)$.

  If we consider the action of $\SL_2$ on quadratic binary forms, we
  also get a geometric quotient on a subset which has a cross section,
  and all fibers are isomorphic, but the quotient is not
  excellent. The proof is virtually the same as above.
\end{ex}

\begin{ex} \label{2exSO2}%
  Let $K$ be a field in which~$-1$ is not a square. Consider the
  natural action of
  $G = \SO_2(K) = \{A \in \GL_2(K) \mid A^T A = I_2,\ \det(A) = 1\}$
  on $X = \AA^2_K$. The invariant ring $K[X]^G$ is known to be
  generated by $x_1^2 + x_2^2$, which defines a quotient
  $X \to \AA^1_K$.  But this has no cross section, even after choosing
  a nonempty open subset $Y \subseteq \AA^1_K$ and restricting to its
  preimage $X$. In fact, giving such a cross section is equivalent to
  giving polynomials $f,g,h \in K[x]$ such that $f^2 + g^2 = x h^2$
  with $h^2 \ne 0$. It is not hard to see that this is possible (if
  and) only if~$-1$ is a square in $K$.
  It follows that the quotient $X \to Y$ is not
  excellent.
\end{ex}

The following proposition deals with the affine case and shows how
Condition~\eqref{2dExcellent3} of \dref{2dExcellent} can be verified.

\begin{prop} \label{2pExcellent}%
  Assume the situation of \dref{2dExcellent}. If $X = \Spec(R)$ and
  $Y = \Spec(B)$ with $B \subseteq R$ rings, then
  Condition~\eqref{2dExcellent1} of \dref{2dExcellent} is equivalent
  to $B \subseteq R^G$, and, given~\eqref{2dExcellent1},
  Condition~\eqref{2dExcellent3} is equivalent to the following:
  \begin{enumerate}
    \renewcommand{\theenumi}{\ref{2dExcellent3}'}
  \item \label{2dExcellent3p} For every homomorphism $B \to B'$ of
    rings we have
    \[
    (B' \otimes_B R)^G \subseteq B' \otimes 1.
    \]
  \end{enumerate}
  In particular, in the situation of \tref{1tSlice}, the morphism
  $\Spec(R_c) \to \Spec(R_c^\Ga)$ is an excellent quotient by $\Ga$
  with fibers $\AA^1_K$.
\end{prop}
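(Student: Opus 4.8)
The goal is to prove \pref{2pExcellent}. I need to establish three things: the equivalence of Condition~\eqref{2dExcellent1} with $B \subseteq R^G$, the equivalence of Condition~\eqref{2dExcellent3} with \eqref{2dExcellent3p} given~\eqref{2dExcellent1}, and finally the application to \tref{1tSlice}.

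The plan is to first unwind Condition~\eqref{2dExcellent1}. In the affine setting, the commuting square translates dually into a statement about ring homomorphisms. The morphism $\qu$ corresponds to the inclusion $B \hookrightarrow R$, the projection $\pr_2$ corresponds to $R \to R \otimes_{\text{something}}$ (the structural map into $G \times X$), and the action $\act$ corresponds to the coaction $R \to \mathcal{O}(G) \otimes R$. The square commutes precisely when the two composites $B \to R \to \mathcal{O}(G)\otimes R$ agree, one sending $b$ to $1 \otimes b$ (via $\pr_2$) and the other applying the coaction. By the standard definition of invariants (made precise by the diagram~\eqref{2eqInvariant} specialized to $X' = X$, $f \in B$), this says exactly that every element of $B$ is $G$-invariant, i.e. $B \subseteq R^G$. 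So the first equivalence is essentially a translation of the diagram into the dual algebraic statement.

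For the second equivalence, I would take $Y' \to Y$ to be an arbitrary morphism; in the affine case I may reduce to $Y' = \Spec(B')$ affine, corresponding to a ring map $B \to B'$, since Condition~\eqref{2dExcellent3} really only needs to be tested on affine base changes (functions are determined affine-locally, and the formation of invariants via the diagram is compatible with affine covers). Then $X' = X \times_Y Y' = \Spec(B' \otimes_B R)$ and $\qu'$ corresponds to the map $B' \to B' \otimes_B R$, $b' \mapsto b' \otimes 1$. The image of the induced map $\gs{Y'} = B' \to \gs{X'} = B' \otimes_B R$ is exactly $B' \otimes 1$. Condition~\eqref{2dExcellent3} asks that this image equals the invariant ring $(B' \otimes_B R)^G$. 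The inclusion $B' \otimes 1 \subseteq (B' \otimes_B R)^G$ always holds once we know $B \subseteq R^G$ (invariance is preserved under base change), so the condition reduces to the reverse inclusion $(B' \otimes_B R)^G \subseteq B' \otimes 1$, which is precisely~\eqref{2dExcellent3p}.

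The main obstacle I anticipate is justifying the reduction of the scheme-theoretic Condition~\eqref{2dExcellent3} — quantified over \emph{all} morphisms $Y' \to Y$ of schemes — to the purely ring-theoretic statement quantified over ring homomorphisms $B \to B'$. I need to argue that it suffices to check affine $Y'$, and that global sections of the relevant invariant sheaf are computed correctly by the tensor-product formula; this rests on the fact that both $\gs{X'}$ and the invariant subring are compatible with the affine structure and with localization, so testing on an affine cover of $Y'$ suffices. Once the dictionary is set up, the final claim follows immediately: \tref{1tSlice}\eqref{1tSliceA} gives $R_c \cong R_c^\Ga[x]$, providing the faithfully flat fiber $F = \AA^1_K$ together with the point $\pt$ and isomorphism $\iso$; Condition~\eqref{2dExcellent1} holds since $R_c^\Ga \subseteq R_c$ consists of invariants; Condition~\eqref{2dExcellent3p} is exactly \tref{1tSlice}\eqref{1tSliceD} with $B = B'$; and Condition~\eqref{2dExcellent2} (surjectivity) holds because, over a field, the fibers of the quotient are single $\Ga$-orbits, which follows from the cross section together with~\eqref{1tSliceC}.
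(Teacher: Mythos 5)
Your two equivalences follow essentially the paper's route, but at the step you yourself single out as the main obstacle --- deducing Condition~\eqref{2dExcellent3} for an arbitrary morphism $Y' \to Y$ from the affine statement~\eqref{2dExcellent3p} --- there is a genuine gap. Covering $Y'$ by affine opens $V = \Spec(B')$ and applying~\eqref{2dExcellent3p} yields, for each $V$, an element $g_V \in \gs{V}$ with $g_V \circ \qu'|_U = f|_U$, where $U = (\qu')^{-1}(V)$. To glue the $g_V$ into a global $g$ via the sheaf property of $\OX{Y'}$ you must know that they agree on overlaps, and for that you need each $g_V$ to be \emph{unique}, i.e., the pullback $\gs{V} \to \gs{U}$ to be injective. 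Your justification (``functions are determined affine-locally'') is the sheaf axiom on $X'$ and does not provide this: nothing in~\eqref{2dExcellent3p} by itself prevents $\qu'$ from killing functions. The paper closes exactly this hole: in the cartesian diagram~\eqref{2eqCart} the outer rectangle exhibits $\qu'|_U$ as a base change of $F \to S$, hence faithfully flat because $F$ is assumed faithfully flat in \dref{2dExcellent}, and then $\gs{V} \to \gs{U}$ is injective by \mycite[Corollaire~2.2.8]{EGA4}. (This is, incidentally, the one place where the faithful flatness of $F$ enters; alternatively, the base-changed cross section $\map{\sect'}{Y'}{X'}$ with $\qu' \circ \sect' = \id$ splits the pullback and also gives injectivity --- but some such argument is indispensable, and your sketch contains none.)

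Two further points. For the last statement, your verification of Condition~\eqref{2dExcellent2} (``over a field, the fibers are single orbits'') proves nothing in the actual setting of \tref{1tSlice}, where $K$ is an arbitrary ring. The correct use of \tref{1tSlice}\eqref{1tSliceC} is algebraic: the composite $\Ga \times Y \to X$ in Condition~\eqref{2dExcellent2} is dual to the homomorphism $R_c \to R_c^\Ga[z]$, $a \mapsto \pi\bigl(\phi(a)\bigr)$, which by~\eqref{1tSliceC} is injective and makes $R_c^\Ga[z]$ a module-finite $R_c$-algebra; lying over then gives surjectivity of $\Spec\bigl(R_c^\Ga[z]\bigr) \to \Spec(R_c)$, which is the surjectivity of~\eqref{2eqSurj}. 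Finally, in the first equivalence you tacitly identify $\gs{G \times X}$ with $\mathcal{O}(G) \otimes R$; \dref{2dExcellent} allows an arbitrary group scheme $G$ over an arbitrary base $S$, and the paper's argument deliberately works with $\gs{G \times X}$ and with morphisms into the affine scheme $Y$ precisely to avoid assuming $G$ affine --- a minor but real imprecision in your dictionary.
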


\begin{proof}
  First assume that Condition~\eqref{2dExcellent1} of
  \dref{2dExcellent} holds, and let
  $f \in B = \gs{Y} = \Hom_\ZZ(Y,\AA^1)$. Viewing~$f$ as an element of
  $R$ means to consider $\map{f \circ \qu}{X}{\AA^1}$, which lies in
  $\gs{X}^G = R^G$ by~\eqref{2dExcellent1}. Conversely, assume that
  $f \circ \qu$ lies in $\gs{X}^G$ for all $f \in
  \Hom_\ZZ(Y,\AA^1)$. So
  \[
  f \circ \qu \circ \act = f \circ \qu \circ \pr_2.
  \]
  The morphisms $\qu \circ \act$ and $\qu \circ \pr_2$ are both into
  the affine scheme $Y$, so they are given by homomorphisms
  $\map{\phi_1,\phi_2}{B}{\gs{G \times X}}$ (see
  \mycite[Proposition~3.4]{Goertz:Wedhorn:2010}), and, by the above
  equality, for every homomorphism $\map{\psi}{\ZZ[x]}{B}$ we have
  $\phi_1 \circ \psi = \phi_2 \circ \psi$. This implies
  $\phi_1 = \phi_2$ and therefore~\eqref{2dExcellent1}.
  
  Now assume that Condition~\eqref{2dExcellent3} of \dref{2dExcellent}
  holds, and let $B \to B'$ be a homomorphism of rings, inducing a
  morphism $Y' := \Spec(B') \to Y$. The map $\map{\qu'}{X'}{Y'}$
  induces the homomorphism $\mapl{\phi}{B' = \gs{Y'}}{\gs{X'} = B'
    \otimes_B R}{b'}{b' \otimes 1}$, and we have
  \[
  (B' \otimes_B R)^G = \gs{X'}^G \underset{\eqref{2dExcellent3}}{=}
  \phi\bigl(\gs{Y'}\bigr) = B' \otimes 1.
  \]
  so~\eqref{2dExcellent3p} follows.

  Conversely assume~\eqref{2dExcellent3p} and let $Y' \to Y$ be a
  morphism of schemes. Since $\qu' \circ \act' = \qu' \circ \pr_2$,
  the image of the map $\map{\phi}{\gs{Y'}}{\gs{X'}}$ induced by
  $\qu'$ is contained in $\gs{X'}^G$.  It remains to show the reverse
  inclusion. Let $V \subseteq Y'$ be an open subset and let
  $U := (\qu')^{-1}(V) \subseteq X'$ be its inverse image. Since all
  squares in the diagram
  \begin{equation} \label{2eqCart}%
  \begin{minipage}{0.8\linewidth}
    \DIAGV{60}%
    {U} \n{\Ear{\sm{\incl}}} \n{X'} \n{\ear} \n{X}
    \n{\Ear{\sm{\iso^{-1}}}} \n{F \times Y} \n{\ear} \n{F} \nn%
    {\Sar{\sm{\qu'|_U}}} \n{\square} \n{\Sar{\sm{\qu'}}} \n{\square}
    \n{\Sar{\sm{\qu}}} \n{\square} \n{\Sar{\sm{\pr_2}}} \n{\square}
    \n{\sar} \nn%
    {V} \n{\Ear{\sm{\incl}}} \n{Y'} \n{\ear} \n{Y} \n{\Ear{\sm{\id}}}
    \n{Y} \n{\ear} \n{S}%
    \diag%
  \end{minipage}
  \end{equation}
  are cartesian, so is the outer rectangle (see
  \mycite[Proposition~4.16]{Goertz:Wedhorn:2010}). Therefore $\qu'|_U$
  is faithfully flat (see~[\citenumber{Goertz:Wedhorn:2010},
  Remark~14.8]), and it follows by \mycite[Corollaire~2.2.8]{EGA4}
  that the map $\gs{V} \to \gs{U}$ induced by it is injective%
  . If $V$ is affine, say
  $V = \Spec(B')$, then $U = V \times_Y X = \Spec(B' \otimes_B R)$,
  and the map $B' = \gs{V} \to \gs{U} = B' \otimes_B R$ induced by
  $\qu'|_U$ is given by $b' \to b' \otimes 1$. Let $f \in \gs{X'}^G$,
  viewed as a morphism $X' \to \AA^1$. Then
  \[
  f|_U \in \Hom_\ZZ(U,\AA^1)^G = (B' \otimes_B R)^G
  \underset{\eqref{2dExcellent3p}}{\subseteq} B' \otimes 1,
  \]
  so there exists $g_V \in B' = \Hom_\ZZ(V,\AA^1)$ with $g_V \circ
  \qu'|_U = f|_U$. By the injectiveness of $\gs{V} \to \gs{U}$, $g_V$
  is uniquely determined. Now it follows from the sheaf property of
  $\OX{Y'}$ that there exists $g \in \Hom_\ZZ(Y',\AA^1)$ with $g \circ
  \qu' = f$, i.e., $f = \phi(g)$. This completes the proof of the
  equivalence.

  For the last statement of the proposition, observe that $\AA^1_K$ is
  faithfully flat over $\Spec(K)$, and that all conditions from
  \dref{2dExcellent} follow directly from \tref{1tSlice} and from this
  proposition.
\end{proof}

We will now prove that an excellent quotient is a geometric
quotient. Let us recall this notion. According to
\mycite[Definition~0.6]{MFK}, a morphism $\map{\qu}{X}{Y}$ of schemes
(where $X$ has an action of a group scheme $G$) is a \df{geometric
  quotient} if:

\begin{enumerate}
  \renewcommand{\theenumi}{g$_{\arabic{enumi}}$}
\item \label{g1} Condition~\eqref{2dExcellent1} from
  \dref{2dExcellent} holds.
\item \label{g2} The morphism $\map{(\act,\pr_2)}{G \times X}{X
    \times_Y X}$ is surjective. (If $S$ is the spectrum of an
  algebraically closed field, this means that the fibers of $\qu$ are
  the $G$-orbits.)
\item \label{g3} The morphism $\qu$ is \df{submersive}, i.e., it is
  surjective and a subset $V \subseteq Y$ is open if its preimage
  $\qu^{-1}(V) \subseteq X$ is open.
\item \label{g4} Condition~\eqref{2dExcellent3} from
  \dref{2dExcellent} holds for the case that $Y' = V \subseteq Y$ is
  an open subset of $Y$.
\end{enumerate}

By~[\citenumber{MFK}, Definition~0.7], the morphism is called a
\df{universal geometric quotient} if for every morphism $Y' \to Y$ of
schemes, the morphism $\map{\qu'}{X \times_Y Y'}{Y'}$ obtained by base
change is a geometric quotient. Recall that a geometric quotient is
always a categorical quotient. Therefore the following result implies
that if an excellent quotient $X \to Y$ exists, it is unique up to
isomorphism. However, the cross section $Y \to X$ (see
\rref{2rExcellent}\eqref{2rExcellentC}) is in general not unique.

\begin{theorem} \label{2tGeo}%
  Let $\map{\qu}{X}{Y}$ be an excellent quotient by a group scheme $G$
  with fibers $F$. Then it is a faithfully flat universal geometric
  quotient.
\end{theorem}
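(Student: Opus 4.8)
The plan is to verify, in turn, faithful flatness and the four conditions \eqref{g1}--\eqref{g4}, and then to promote everything to a universal statement by base change. For faithful flatness, note that under the isomorphism $\iso$ the morphism $\qu$ becomes the second projection $\map{\pr_2}{F \times Y}{Y}$, which is the base change of the structure morphism $F \to S$ along $Y \to S$; since $F$ is faithfully flat over $S$, so is $\qu$. For the universal assertion I would invoke \rref{2rExcellent}\eqref{2rExcellentD}: every base change of an excellent quotient is again an excellent quotient by the same group. Hence it suffices to prove that an excellent quotient is a geometric quotient and then to apply this to each base change $\qu'$, which is again excellent. So the real work is to check \eqref{g1}--\eqref{g4}.

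Conditions \eqref{g1} and \eqref{g4} are immediate, being literally \eqref{2dExcellent1} and the special case $Y' = V$ of \eqref{2dExcellent3}, both of which hold by hypothesis. For submersiveness \eqref{g3}, I would use the cross section $\map{\sect}{Y}{X}$ from \rref{2rExcellent}\eqref{2rExcellentC}, which satisfies $\qu \circ \sect = \id_Y$ and already forces $\qu$ to be surjective. If $V \subseteq Y$ has open preimage $\qu^{-1}(V)$, then $V = \sect^{-1}\bigl(\qu^{-1}(V)\bigr)$ is open by continuity of $\sect$, which is exactly the quotient-topology property; thus $\qu$ is submersive.

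The hard part is \eqref{g2}, the surjectivity of $\map{(\act,\pr_2)}{G \times X}{X \times_Y X}$, and my plan is to reduce it to \eqref{2dExcellent2} by a two-copies-of-the-section argument. Write $\beta$ for the surjective morphism of \eqref{2eqSurj}, so $\beta(g,y) = \act(g,\sect(y))$; from \eqref{2dExcellent1} and $\qu \circ \sect = \id_Y$ one checks $\qu \circ \beta = \pr_2$, i.e.\ $\beta$ is a morphism over $Y$. Forming the fibre product over $Y$ then gives $\beta \times_Y \beta$, which I identify with a morphism $\epsilon \colon G \times G \times Y \to X \times_Y X$, $(g_1,g_2,y) \mapsto \bigl(\beta(g_1,y),\beta(g_2,y)\bigr)$. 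Since $\epsilon$ is obtained from the surjection $\beta$ by two successive base changes, it is surjective (surjectivity being stable under base change). Finally I would factor $\epsilon$ through $(\act,\pr_2)$: define $\delta \colon G \times G \times Y \to G \times X$ by $(g_1,g_2,y) \mapsto \bigl(g_1 g_2^{-1},\, \act(g_2,\sect(y))\bigr)$, using multiplication and inversion on $G$. The action axioms give
\[
(\act,\pr_2)\bigl(g_1 g_2^{-1},\,\act(g_2,\sect(y))\bigr) = \bigl(\act(g_1,\sect(y)),\,\act(g_2,\sect(y))\bigr) = \epsilon(g_1,g_2,y),
\]
so $(\act,\pr_2) \circ \delta = \epsilon$ is surjective, whence $(\act,\pr_2)$ is surjective. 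I expect this to be the main obstacle, precisely because the pointwise identity $x_1 = (g_1 g_2^{-1})x_2$ for $x_i = g_i\sect(y)$ has to be upgraded to a genuine morphism; routing it through $\delta$ and $\epsilon$ avoids having to ``divide'' by a section point and keeps everything in the category of schemes.

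Assembling the pieces, an excellent quotient is faithfully flat and satisfies \eqref{g1}--\eqref{g4}, hence is a geometric quotient; applying the identical argument to every base change (again excellent by \rref{2rExcellent}\eqref{2rExcellentD}) upgrades this to a universal geometric quotient. The only genuinely substantive point is \eqref{g2}; the remaining conditions are either definitional or follow formally from the existence of the cross section and the product structure.
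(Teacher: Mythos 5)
Your proposal is correct, and its skeleton matches the paper's: faithful flatness via the product structure $\iso$ (the paper packages this as the cartesian diagram~\eqref{2eqCart}), universality reduced to base-change stability of excellent quotients (\rref{2rExcellent}\eqref{2rExcellentD}), \eqref{g1} and~\eqref{g4} immediate, and \eqref{g3} from the cross section exactly as you argue. But your treatment of the one substantive condition~\eqref{g2} takes a genuinely different route. The paper checks surjectivity of $(\act,\pr_2)$ on geometric points with values in fields (citing G\"ortz--Wedhorn, Proposition~4.8): given a point of $X \times_Y X$, it enlarges the field $L$ until the two components lift through the surjection~\eqref{2eqSurj} to pairs $(g_i,y_i)$, argues pointwise that $y_1 = y_2$, and divides: $x_1 = g_1 g_2^{-1}(x_2)$. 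You globalize this division into honest scheme morphisms: $\epsilon = \beta \times_Y \beta$ is surjective because it factors as $(\beta \times_Y \id)\circ(\id \times_Y \beta)$, a composition of two base changes of the surjection $\beta$ (your phrase ``two successive base changes'' is shorthand for exactly this, and is fine), and the identity $(\act,\pr_2)\circ\delta = \epsilon$ is checked on $T$-valued points for arbitrary $T$ using the group-scheme action axioms, hence holds as an equality of morphisms by Yoneda. Your version buys brevity and stays entirely in the category of schemes, with no field extensions or lifting of individual points; it works because the required relation $x_1 = (g_1g_2^{-1})x_2$ is realized uniformly by the single morphism $\delta$ built from $\mult$, $\inv$, $\sect$, and $\act$. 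The paper's pointwise method is heavier here but is the technique it reuses in the proof of \tref{2tSubgroup}, where the analogous lifts ($h$, $\widetilde{h}$, through two different quotients) depend on the point in a way not obviously packaged into one morphism, so setting up the field-valued-point machinery once pays off later. Both arguments are complete; yours is arguably the cleaner proof of \tref{2tGeo} in isolation.
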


\begin{proof}
  The cartesian diagram~\eqref{2eqCart} (without the first two
  columns) and the argument after it show that $X \to Y$ is faithfully
  flat. Since by \rref{2rExcellent}\eqref{2rExcellentD} excellent
  quotients are stable under base change, we only need to show that
  $X \to Y$ is a geometric quotient. The conditions~\eqref{g1}
  and~\eqref{g4} are immediate, and~\eqref{g3} follows since $X \to Y$
  has a cross section by \rref{2rExcellent}\eqref{2rExcellentC}.

  It remains to prove the condition~\eqref{g2}. We will establish the
  surjectivity of $\map{(\act,\pr_2)}{G \times X}{X \times_Y X}$ by
  proving surjectivity on the geometric points with values in fields
  $L$ (see \mycite[Proposition~4.8]{Goertz:Wedhorn:2010}). Fixing $L$
  with a morphism $\Spec(L) \to S$, we have a functor from the
  category of $S$-schemes to the category of sets, which assigns to an
  $S$-scheme $A$ the set $\widehat{A} := \Hom_S(\Spec(L),A)$, and to a
  morphism $\map{f}{A}{B}$ of $S$-schemes the map
  $\mapl{\widehat{f}}{\widehat{A}}{\widehat{B}}{z}{f \circ z}$. It is
  easy to see that
  $\map{(\widehat{\pr}_1,\widehat{\pr}_2)}{\widehat{A \times
      B}}{\widehat{A} \times \widehat{B}}$
  is a bijection between the fiber product and the cartesian product,
  and, more generally, for an $S$-scheme $C$ and morphisms
  $\map{f}{A}{C}$, $\map{g}{B}{C}$ the map
  \[
  \map{(\widehat{\pr}_1,\widehat{\pr}_2)}{\widehat{A \times_C
      B}}{\bigl\{(x,y) \in \widehat{A} \times \widehat{B} \mid
    \widehat{f}(x) = \widehat{g}(y)\bigr\} =: \widehat{A}
    \times_{\widehat{C}} \widehat{A}}
  \]
  is a bijection (see \mycite[(3.4.3.2)]{EGA1}). In particular,
  $\widehat{G}$ is a group acting on $\widehat{X}$. To prove the
  surjectivity, take an arbitrary morphism $\Spec(K) \to X \times_Y X$
  with $K$ a field. For a field extension $L$ this yields morphisms
  $\Spec(L) \to X \times_Y X$, and, by composition $\Spec(L) \to S$.
  By the above, we receive a pair
  $(x_1,x_2) \in \widehat{X} \times_{\widehat{Y}} \widehat{X}$. The
  claimed surjectivity will follow if we can show that $(x_1,x_2)$
  corresponds to a point in the image of
  $\map{(\widehat{\act},\widehat{\pr}_2)}{\widehat{G \times
      X}}{\widehat{X \times _Y X}}$.
  Using the surjectivity of~\eqref{2eqSurj} and Proposition~4.8
  from~[\citenumber{Goertz:Wedhorn:2010}], we can choose $L$ large
  enough such that there exist $g_1,g_2 \in \widehat{G}$ and
  $y_1,y_1 \in \widehat{Y}$ such that
  $x_i = g_i\bigl(\widehat{\sect}(y_i)\bigr)$.
  We have
  \[
  y_i \underset{\eqref{2eqSection}}{=}
  \widehat{\qu}\bigl(\widehat{\sect}(y_i)\bigr)
  \underset{\eqref{2dExcellent1}}{=}
  \widehat{\qu}\left(g_i\bigl(\widehat{\sect}(y_i)\bigr)\right) =
  \widehat{\qu}(x_i),
  \]
  which with
  $(x_1,x_2) \in \widehat{X} \times_{\widehat{Y}} \widehat{X}$ implies
  $y_1 = y_2$. Therefore $x_1 = g_1 g_2^{-1} (x_2)$, so indeed
  $(x_1,x_2)$ corresponds to a point in the image of
  $\map{(\widehat{\act},\widehat{\pr}_2)}{\widehat{G \times
      X}}{\widehat{X \times _Y X}}$.
\end{proof}

It is hardly surprising that the converse of \tref{2tGeo} does not
hold. The following example illustrates this.

\begin{ex} \label{2exCounter}%
  Let the cyclic group $G$ of order~$2$ act on the ring
  $R = K[x,x^{-1}]$ of Laurent polynomials over an integral domain in
  which~$2$ is invertible by mapping~$x$ to $-x$. Then
  $R^G = K[x^2,x^{-2}]$, and $X := \Spec(R) \to \Spec(R^G) =: Y$ is a
  faithfully flat universal geometric quotient with all fibers of
  $K$-points isomorphic to $F := \Spec(K[x]/(x^2 - 1))$. But $X \to Y$
  is not an excellent quotient since $X$ is irreducible, but
  $F \times Y$ is not.

  In fact, for finite group actions the quotient usually has no cross
  section $Y \to X$.
\end{ex}

\begin{rem} \label{2rGPS}%
  As mentioned in the introduction, the papers by
  \mycite{Greuel:Pfister:1993} and \mycite{Sancho:2000} both revolve
  around geometric quotients. In both papers, it appears that the
  following argument is used (see~[\citenumber{Greuel:Pfister:1993},
  Proof of Proposition~1.6] and~[\citenumber{Sancho:2000}, last
  statement of Proposition~2.3]): If $G = \Ga$ (or a connected
  unipotent group) acts on a ring $R$ and $R$ is purely transcendental
  over $R^G$, then $\Spec(R) \to \Spec(R^G)$ is a geometric
  quotient. But this is not true in general: Consider the $\Ga$-action
  on the polynomial ring $R = K[x_1,x_2]$ given by mapping~$x_1$ to
  $x_1 + x_2 z$ and fixing~$x_2$. Then $R^G = K[x_2]$, but the
  quotient is not geometric since fiber $x_2 = 0$ consists of
  $0$-dimensional orbits. So the proofs
  in~[\citenumber{Greuel:Pfister:1993}] and~[\citenumber{Sancho:2000}]
  seem to have a gap. But the statements are correct, the missing link
  being provided by \tref{1tSlice}\eqref{1tSliceC} of this paper.
\end{rem}

The following lemma, which we will need later, deals with invariant
fields and geometric quotients, but not with excellent quotients. This
may be a good place to prove it. Although the lemma is almost
certainly well-known, I could not find it in the literature.

\begin{lemma} \label{2lField}%
  Let $X \to Y$ be a geometric quotient by a group scheme $G$, with
  $X$ an integral scheme. Then $K(X)^G = K(Y)$.
\end{lemma}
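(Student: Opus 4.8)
The plan is to prove the two inclusions $K(Y) \subseteq K(X)^G$ and $K(X)^G \subseteq K(Y)$ separately. The first is the easy direction: since $\qu\colon X \to Y$ is a geometric quotient, Condition~\eqref{2dExcellent1} gives $\qu \circ \act = \qu \circ \pr_2$, so the pullback $\qu^*\colon K(Y) \hookrightarrow K(X)$ lands inside the $G$-invariant rational functions. (Here I would first note that because $X$ is integral and $\qu$ is dominant—it is surjective, being submersive by~\eqref{g3}—the function field $K(Y)$ embeds into $K(X)$, and $Y$ is integral as well.) So $K(Y) \subseteq K(X)^G$ is immediate from the definition of the invariants.

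The substance is the reverse inclusion. The idea is to take an invariant rational function $\varphi \in K(X)^G$ and realize it on a suitable affine open piece of $X$ where it becomes a genuine invariant regular function, then descend it through the quotient. First I would choose an affine open $V = \Spec(B') \subseteq Y$ small enough that $U := \qu^{-1}(V)$ is affine, say $U = \Spec(R')$, and that $\varphi$ is regular on $U$; shrinking $V$ is legitimate because the quotient restricted to $U \to V$ is again a geometric quotient (this is the content of~\eqref{g4} together with the base-change formulation of~\eqref{2dExcellent3}). On $U$ the function $\varphi$ is an element of $(R')^G = \gs{U}^G$, and since $U \to V$ is a geometric quotient, property~\eqref{g4}—i.e.\ Condition~\eqref{2dExcellent3} applied to the open immersion $V \hookrightarrow Y$—tells us precisely that the map $\gs{V} \to \gs{U}^G$ induced by $\qu$ is \emph{surjective}. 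Hence $\varphi = \qu^*(\psi)$ for some $\psi \in \gs{V} = B'$, which shows $\varphi \in K(Y)$.

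I expect the main obstacle to be the bookkeeping that makes ``shrinking to where $\varphi$ is regular and $U,V$ are affine'' rigorous, together with checking that $\varphi$, a priori only an element of $K(X)^G$, actually restricts to an element of the invariant ring $\gs{U}^G$ rather than merely to a rational invariant on $U$. The first part is routine once one knows $X$ and $Y$ are integral and $\qu$ is dominant with a well-behaved generic fiber: the locus of regularity of $\varphi$ is a $G$-stable open whose image one can intersect with an affine open of $Y$. The second part is the real heart: one must use that on the affine $U$ the abstractly-defined invariance (commutativity of diagram~\eqref{2eqInvariant}) for the regular function $\varphi|_U$ coincides with the field-theoretic $G$-invariance inherited from $\varphi$. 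This is where the precise definition of $\gs{X'}^G$ via $\Hom_\ZZ(X',\AA^1)$ in~\eqref{2dExcellent3} does the work, and I would lean on \pref{2pExcellent}'s reformulation~\eqref{2dExcellent3p} to translate between the scheme-theoretic condition and the algebraic statement $(B' \otimes_B R')^G$-style invariance. Once regularity and invariance are secured on an affine chart, the surjectivity in~\eqref{g4} closes the argument cleanly.
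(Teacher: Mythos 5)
Your two-inclusion plan, with \eqref{g1} for the easy direction and \eqref{g4} for the hard one, matches the paper's architecture, but the step you defer to ``routine bookkeeping'' is the actual crux of the proof, and your sketch of it would fail as described. Given $\varphi \in K(X)^G$ with domain of definition $U_0 \subseteq X$, you need an open $V \subseteq Y$ with $\qu^{-1}(V) \subseteq U_0$; otherwise $\varphi$ does not restrict to an element of $\gs{\qu^{-1}(V)}^G$ and \eqref{g4} has nothing to bite on. Your recipe --- take the image of the $G$-stable open $U_0$ and intersect it with an affine open of $Y$ --- does not produce such a $V$: even for open $V \subseteq \qu(U_0)$, the preimage $\qu^{-1}(V)$ will in general still meet $X \setminus U_0$, unless $U_0$ is \emph{saturated}, i.e.\ $\qu^{-1}\bigl(\qu(U_0)\bigr) = U_0$. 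Saturation of $G$-stable opens is exactly what condition~\eqref{g2} supplies (the fibers are single orbits; the paper cites the argument of remark~(4) in \mycite[page~6]{MFK} for this), and it is conspicuous that your proposal never invokes \eqref{g2} at all. This is a genuine missing idea rather than a notational chore: without the saturation step there may be no open $V$ whatsoever whose full preimage lies in the regularity locus (picture a regularity locus whose closed complement dominates $Y$, as happens for non-saturated opens).

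Once saturation is in hand, the paper's route is shorter than yours and avoids your other questionable step: set $V := \qu(U_0)$; then $\qu^{-1}(V) = U_0$ by \eqref{g2}, $V$ is open by submersiveness \eqref{g3}, and \eqref{g4} applied to this $V$ says directly that $\varphi \in \gs{U_0}^G$ is the image of some element of $\gs{V}$, i.e.\ $\varphi \in K(Y)$. In particular, no affineness is needed anywhere: \eqref{g4} is stated for arbitrary open $V \subseteq Y$ in terms of $\gs{\qu^{-1}(V)}^G$, so your requirement that $V$ and $U = \qu^{-1}(V)$ both be affine --- the latter is not guaranteed, since preimages of affine opens need not be affine --- and your appeal to \pref{2pExcellent} and \eqref{2dExcellent3p} (statements tailored to excellent quotients and arbitrary base change, whereas a geometric quotient only gives you the open-immersion case) are unnecessary detours. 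Your worry about reconciling the field-theoretic and scheme-theoretic notions of invariance is also a non-issue in the paper's setup: an element of $K(X)^G$ is by definition a morphism $\map{f}{U_0}{\AA^1}$ on a $G$-stable domain making diagram~\eqref{2eqInvariant} commute, which is verbatim the membership condition for $\gs{U_0}^G$, so no translation step is required.
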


\begin{proof}
  We view elements of the function field $K(X)$ as morphisms
  $\map{f}{U}{\AA^1}$, where $U \subseteq X$ is the domain of
  definition of the rational function $X \dashrightarrow \AA^1$
  represented by~$f$ (see \mycite[page~235]{Goertz:Wedhorn:2010}). The
  elements of $K(X)^G$ are those where $U$ is $G$-stable and the
  diagram~\eqref{2eqInvariant} (with $X'$ replaced by $U$ and $\qu'$
  by $\qu|_U$) commutes.

  For an element of $K(Y)$, given by $\map{g}{V}{\AA^1}$, the
  property~\eqref{g1} of the geometric quotient implies that the
  composition $U := \qu^{-1}(V) \xlongrightarrow{\qu} V
  \xlongrightarrow{g} \AA^1$ defines an element of $K(X)^G$, so we
  obtain an embedding $K(Y) \subseteq K(X)^G$. To prove equality, take
  an element of $K(X)^G$, given by $\map{f}{U}{\AA^1}$. Let $V =
  \qu(U)$ be the image of $U$ in $Y$. Since $U$ is $G$-stable, it
  follows from~\eqref{g2} that $\qu^{-1}(V) = U$ (see the argument in
  the proof of remark~(4) in \mycite[page~6]{MFK}). By~\eqref{g3}, $V$
  is open, and now~\eqref{g4} implies that~$f$ lies in $K(Y)$.
\end{proof}

To be able to deal with a solvable group by iterating over a chain of
subgroups, we need that ``putting together'' excellent quotients along
a subgroup chain yields an excellent quotient. This is the contents of
the following result.

\begin{theorem} \label{2tSubgroup}%
  Let $G$ be a group scheme acting on a scheme $X$ by a morphism
  $\map{\act_X}{G \times X}{X}$. Let $H \subseteq G$ be a normal
  subgroup scheme and let $\map{\qu_1}{X}{Y}$ be an excellent quotient
  by $H$ with fibers $F_1$. Then there exists a unique $G$-action on
  $Y$ such that the diagram
  \begin{equation} \label{2eqActY}%
    \begin{minipage}{0.8\linewidth}
      \DIAGV{60}%
      {G \times X} \n{\Ear{\sm{\act_X}}} \n{X} \nn%
      {\Sar{\sm{(\id,\qu_1)}}} \n{} \n{\Sar{\sm{\qu_1}}} \nn%
      {G \times Y} \n{\Ear{\sm{\act_Y}}} \n{Y}%
      \diag%
    \end{minipage}
  \end{equation}
  commutes. Suppose there is an excellent quotient $\map{\qu_2}{Y}{Z}$
  by $G$ with fibers $F_2$. Then $\map{\qu_2 \circ \qu_1}{X}{Z}$ is an
  excellent quotient of $X$ by $G$ with fibers $F_1 \times F_2$.
\end{theorem}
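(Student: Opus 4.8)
\emph{Constructing the $G$-action on $Y$.} The plan is to produce $\act_Y$ by a universal property rather than by hand. Since $\qu_1$ is an excellent quotient by $H$, it is a universal geometric quotient (\tref{2tGeo}) and hence a categorical quotient by $H$; consequently its base change $\map{(\id_G,\qu_1)}{G \times X}{G \times Y}$ along $\map{\pr_Y}{G \times Y}{Y}$ is again an excellent quotient by $H$ (\rref{2rExcellent}\eqref{2rExcellentD}), hence a categorical one (\tref{2tGeo}), where $H$ acts on $G \times X$ through the second factor by $h\cdot(g,x)=(g,h\cdot x)$. First I would verify that $\qu_1 \circ \act_X$ is invariant for this $H$-action: for $h \in H$ normality gives $g\cdot(h\cdot x)=(ghg^{-1})\cdot(g\cdot x)$ with $ghg^{-1}\in H$, so $\qu_1(g\cdot(h\cdot x))=\qu_1(g\cdot x)$ by Condition~\eqref{2dExcellent1} for $\qu_1$. \emph{This is the only place normality of $H$ enters.} The categorical quotient property then yields a unique $\act_Y$ with $\qu_1\circ\act_X=\act_Y\circ(\id_G,\qu_1)$, which is exactly~\eqref{2eqActY}. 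That $\act_Y$ is a group action follows from the corresponding axioms for $\act_X$ by the same uniqueness: each axiom, after precomposition with a suitable base change of $\qu_1$ (a categorical quotient by $H$), reduces to the axiom for $\act_X$.

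\emph{The product structure for $\qu:=\qu_2\circ\qu_1$.} I would take $F:=F_1\times F_2$ (faithfully flat over $S$, being a product of faithfully flat schemes), $\pt:=(\pt_1,\pt_2)$, and
\[
\iso:=\iso_1\circ(\id_{F_1}\times\iso_2)\colon\ F_1\times F_2\times Z\longrightarrow X,
\]
the composite through $F_1\times Y$. As a composite of isomorphisms $\iso$ is an isomorphism, and from $\qu_1\circ\iso_1=\pr_Y$ and $\qu_2\circ\iso_2=\pr_Z$ one gets $\qu\circ\iso=\pr_Z$, so $\iso$ is over $Z$. The associated cross section (\rref{2rExcellent}\eqref{2rExcellentC}) is $\sect=\sect_1\circ\sect_2$, with $\sect_i$ the cross sections of $\qu_i$; by \rref{2rExcellent}\eqref{2rExcellentB} it suffices to test Condition~\eqref{2dExcellent2} for this choice.

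\emph{Conditions \eqref{2dExcellent1} and \eqref{2dExcellent3}.} Condition~\eqref{2dExcellent1} for $\qu$ is a chase of squares: $\qu\circ\act_X=\qu_2\circ\act_Y\circ(\id_G,\qu_1)=\qu_2\circ\pr_2\circ(\id_G,\qu_1)=\qu\circ\pr_2$, using~\eqref{2eqActY} and Condition~\eqref{2dExcellent1} for $\qu_2$. For Condition~\eqref{2dExcellent3}, given $Z'\to Z$ I set $Y':=Y\times_Z Z'$ and note $X':=X\times_Z Z'=X\times_Y Y'$, so that $\map{\qu_1'}{X'}{Y'}$ and $\map{\qu_2'}{Y'}{Z'}$ are the base changes, excellent by $H$ and by $G$ respectively. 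Condition~\eqref{2dExcellent3} for $\qu_1$ identifies the image of $\gs{Y'}\to\gs{X'}$ with $\gs{X'}^H$, and for $\qu_2$ identifies the image of $\gs{Z'}\to\gs{Y'}$ with $\gs{Y'}^G$. The point to check is that $(\qu_1')^*$ maps $\gs{Y'}^G$ onto $\gs{X'}^G$: the inclusion ``into'' is clear since $\qu_1'$ is $G$-equivariant (base change of~\eqref{2eqActY}); for ``onto'', take $f\in\gs{X'}^G\subseteq\gs{X'}^H$, write $f=(\qu_1')^*\tilde f$ with $\tilde f\in\gs{Y'}$ unique (as $(\qu_1')^*$ is injective by faithful flatness, \mycite[Corollaire~2.2.8]{EGA4}), and deduce $\tilde f\in\gs{Y'}^G$ by observing that $\tilde f\circ\act_{Y'}$ and $\tilde f\circ\pr_2$ become equal after the injective pullback along the faithfully flat $(\id_G,\qu_1')$. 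Since the image of $\gs{Z'}\to\gs{X'}$ equals $(\qu_1')^*\bigl(\gs{Y'}^G\bigr)=\gs{X'}^G$, Condition~\eqref{2dExcellent3} follows.

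\emph{Condition~\eqref{2dExcellent2}, the main obstacle.} The difficulty is that $\sect_1$ is not $G$-equivariant, so surjectivity of $G\times Z\xlongrightarrow{(\id,\sect)}G\times X\xlongrightarrow{\act_X}X$ cannot be read off directly. I would test it on geometric points with values in fields, as in the proof of \tref{2tGeo}, enlarging the field freely. Given a point $x$ of $X$, Condition~\eqref{2dExcellent2} for $\qu_1$ writes $x=h\cdot\sect_1(y)$ with $h\in H$, $y\in Y$, and Condition~\eqref{2dExcellent2} for $\qu_2$ writes $y=g\cdot_Y\sect_2(z)$ with $g\in G$, $z\in Z$. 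Using~\eqref{2eqActY} and $\qu_1\circ\sect_1=\id_Y$ one computes $\qu_1\bigl(g\cdot\sect(z)\bigr)=g\cdot_Y\sect_2(z)=y$, so $g\cdot\sect(z)$ and $\sect_1(y)$ lie in the same fiber of $\qu_1$. As $\qu_1$ is a geometric quotient by $H$, Condition~\eqref{g2} yields (after a further extension) an $h_0\in H$ with $\sect_1(y)=h_0\cdot\bigl(g\cdot\sect(z)\bigr)$, whence $x=(h\,h_0\,g)\cdot\sect(z)$ is in the image. This establishes Condition~\eqref{2dExcellent2}, and with fibers $F=F_1\times F_2$ the composite $\qu_2\circ\qu_1$ is an excellent quotient of $X$ by $G$.
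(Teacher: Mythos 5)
Your proposal is correct and follows essentially the same route as the paper: the $G$-action on $Y$ via the categorical-quotient property of $\qu_1$ (with normality of $H$ entering exactly to make $\qu_1 \circ \act_X$ $H$-invariant), the isomorphism $\iso = \iso_1 \circ (\id_{F_1},\iso_2)$ with point $(\pt_1,\pt_2)$ and cross section $\sect = \sect_1 \circ \sect_2$, Condition~\eqref{2dExcellent1} by a diagram chase, Condition~\eqref{2dExcellent3} by factoring $f$ through $g \in \gs{Y'}$ and proving $g \in \gs{Y'}^G$ via injectivity of the pullback along the faithfully flat $(\id_G,\qu_1')$, and Condition~\eqref{2dExcellent2} on field-valued geometric points. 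The only (harmless) deviation is at the end of Condition~\eqref{2dExcellent2}: where you invoke property~\eqref{g2} of the geometric quotient $\qu_1$ (available by \tref{2tGeo}) to produce $h_0 \in H$ moving $g\bigl(\sect(z)\bigr)$ to $\sect_1(y)$ inside the fiber over $y$, the paper instead applies Condition~\eqref{2dExcellent2} of $\qu_1$ a second time to the auxiliary point $x' := g\bigl(\sect(z)\bigr)$, writing $x' = \widetilde{h}\bigl(\sect_1(\widetilde{y})\bigr)$ and deducing $\widetilde{y} = y$ --- the two steps are equivalent in substance.
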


\begin{proof}
  By \tref{2tGeo} $\qu_1$ is a universal geometric quotient, so by
  \mycite[Proposition~0.1]{MFK} it is a universal categorical
  quotient. In particular, $G \times X \to G \times Y$ is a
  categorical quotient by $H$, with $H$ acting trivially on $G$. We
  leave it to the
  reader to check, using the normality of $H$, that the diagram
  \vspace{-3mm}%
  \DIAGV{56}%
  {H \times G \times X} \n{} \n{\Earv{\sm{(\id_G,\act_H)}}{75}} \n{}
  \n{G \times X} \nn%
  {} \n{} \n{} \n{} \n{\Sar{\sm{\act_X}}} \nn%
  {\Sarv{\sm{(\pr_2,\pr_3)}}{130}} \n{} \n{} \n{} \n{X} \nn%
  {} \n{} \n{} \n{} \n{\Sar{\sm{\qu_1}}} \nn%
  {G \times X} \n{\Ear{\sm{\act_X}}} \n{X} \n{\Ear{\sm{\qu_1}}} \n{Y}%
  \diag%
  (with $\act_H$ standing for the $H$-action on $X$) commutes. The
  universal property of $G \times X \to G \times Y$ now yields a
  unique morphism $\map{\act_Y}{G \times Y}{Y}$ such
  that~\eqref{2eqActY} commutes. By a further diagram chase, one
  checks that $\act_Y$ defines an action.

  To show that $\qu_2 \circ \qu_1$ is an excellent quotient, we first
  remark that $F_1 \times F_2$ is faithfully flat (see
  \mycite[Remark~14.8]{Goertz:Wedhorn:2010}). From the given
  $S$-valued points $\map{\pt_i}{S}{F_i}$ we form the composition
  $\pt\mbox{:} \ S \xlongrightarrow{\pt_2} F_2 = S \times F_2
  \xlongrightarrow{(\pt_1,\id)} F_1 \times F_2$. With
  $\map{\iso_1}{F_1 \times Y}{X}$ and $\map{\iso_2}{F_2 \times Z}{Y}$
  the given isomorphisms, the diagram \vspace{-3mm}%
  \DIAGV{60}%
  {F_1 \times F_2 \times Z} \n{}
  \n{\Earv{\sm{(\id_{F_1},\iso_2)}}{70}} \n{} \n{F_1 \times Y} \n{}
  \n{\Earv{\sm{\iso_1}}{80}} \n{} \n{X} \nn%
  {} \n{\seaR{\sm{(\pr_1,\pr_3)}}} \n{} \n{\Swar{\sm{(\id,\qu_2)}}}
  \n{} \n{\Sear{\sm{\pr_2}}} \n{} \n{\swaR{\sm{\qu_1}}} \nn%
  {} \n{} \n{F_1 \times Z} \n{} \n{} \n{} \n{Y} \nn%
  {} \n{} \n{} \n{\seaR{\sm{\pr_2}}} \n{} \n{\swaR{\sm{\qu_2}}} \nn%
  {} \n{} \n{} \n{} \n{Z} \diag%
  commutes, so its upper row defines a $Z$-isomorphism $\map{\iso}{F_1
    \times F_2 \times Z}{X}$. We go on by
  proving~\eqref{2dExcellent1}--\eqref{2dExcellent3} from
  \dref{2dExcellent}.
  \begin{enumerate}
  \item[\eqref{2dExcellent1}] This follows since the diagram
    \vspace{-2mm}%
    \DIAGV{60}%
    {G \times X} \n{} \n{\Earv{\sm{\act_X}}{100}} \n{} \n{X} \nn%
    {\Sar{\sm{\pr_2}}} \n{\Searv{\sm{(\id,\qu_1)}}{47}} \n{} \n{}
    \n{\Sar{\sm{\qu_1}}} \nn%
    {X} \n{} \n{G \times Y} \n{\Ear{\sm{\act_Y}}} \n{Y} \nn%
    {} \n{\seaR{\sm{\qu_1}}} \n{\Sar{\sm{\pr_2}}} \n{}
    \n{\Sarv{\sm{\qu_2}}{48}} \nn%
    {} \n{} \n{Y} \n{\Ear{\sm{\qu_2}}} \n{Z}%
    \diag%
    commutes.
  \item[\eqref{2dExcellent2}] By
    \rref{2rExcellent}\eqref{2rExcellentC} the $\qu_i$ have cross
    sections $\map{\sec_1 = \iso_1 \circ (\pt_1,\id_Y)}{Y}{X}$ and
    $\map{\sec_2 = \iso_2 \circ (\pt_2,\id_Z)}{Z}{Y}$. The commutative
    diagram \vspace{-2mm}%
    \DIAGV{60}%
    {Z} \n{} \n{\Earv{\sm{(\pt_2,\id)}}{100}} \n{} \n{F_2 \times Z}
    \n{} \n{\Earv{\sm{\iso_2}}{90}} \n{} \n{Y} \nn%
    {} \n{} \n{\eseaR{\sm{(\pt,\id)}}} \n{} \n{\saR{\sm{(\pt_1,\id)}}}
    \n{} \n{} \n{} \n{\Sar{\sm{(\pt_1,\id)}}} \nn%
    {} \n{} \n{} \n{} \n{F_1 \times F_2 \times Z} \n{}
    \n{\Earv{\sm{(\id_{F_1},\iso_2)}}{65}} \n{} \n{F_1 \times Y} \nn%
    {} \n{} \n{} \n{} \n{} \n{} \n{\eseaR{\sm{\iso}}} \n{}
    \n{\Sar{\sm{\iso_1}}} \nn%
    {} \n{} \n{} \n{} \n{} \n{} \n{} \n{} \n{X}%
    \diag%
    shows that $\sect:= \iso \circ (\pt,\id_Z)$ is the cross section
    of $\qu := \qu_2 \circ \qu_1$. We know that
    $H \times Y \xlongrightarrow{(\id,\sect_1)} H \times X
    \xlongrightarrow{\act_X} X$
    and
    $G \times Z \xlongrightarrow{(\id,\sect_2)} G \times Y
    \xlongrightarrow{\act_Y} Y$
    are surjective, and will deduce that
    $G \times Z \xlongrightarrow{(\id,\sect)} G \times X
    \xlongrightarrow{\act_X} X$
    is surjective. As in the proof of \tref{2tGeo} we will use
    Proposition~4.8 from \mycite{Goertz:Wedhorn:2010}, and write
    $\widehat{X} = \Hom_S(\Spec(L),X)$ for $L$ a field, and so on. Let
    $\Spec(K) \to X$ be a $K$-geometric point (with $K$ a field), and
    let $x \in \widehat{X}$ be the point obtained by composing with
    $\Spec(L) \to \Spec(K)$ for a field extension $L$. Set
    $y := \widehat{\qu}_1(x) \in \widehat{Y}$. Choosing $L$ large
    enough, we obtain $g \in \widehat{G}$ and $z \in \widehat{Z}$ with
    $g\bigl(\widehat{\sect}_2(z)\bigr) = y$, and $h \in \widehat{H}$,
    $y' \in \widehat{Y}$ with
    $h\bigl(\widehat{\sect}_1(y')\bigr) = x$. It follows that
    \begin{equation} \label{2eqYys}%
      y = \widehat{\qu}_1(x) \underset{\eqref{2dExcellent1}}{=}
      \widehat{\qu}_1\bigl(\widehat{\sect}_1(y')\bigr) = y'
    \end{equation}
    Set $x' := g\bigl(\widehat{\sect}(z)\bigr)$. Then
    \begin{equation} \label{2eqQuoX}%
      \widehat{\qu}_1(x') \underset{\eqref{2eqActY}}{=}
      g\left(\widehat{\qu}_1\bigl(\widehat{\sect}_1
        (\widehat{\sect}_2(z))\bigr)\right) =
      g\bigl(\widehat{\sect}_2(z)\bigr) = y.
    \end{equation}
    Enlarging $L$ again we obtain $\widetilde{h} \in \widehat{H}$ and
    $\widetilde{y} \in \widehat{Y}$ such that $x' =
    \widetilde{h}\bigl(\widehat{\sect}_1(\widetilde{y})\bigr)$. It
    follows that
    \[
    \widetilde{y} =
    \widehat{\qu}_1\bigl(\widehat{\sect}_1(\widetilde{y})\bigr)
    \underset{\eqref{2dExcellent1}}{=} \widehat{\qu}_1(x')
    \underset{\eqref{2eqQuoX}}{=} y,
    \]
    so $\widetilde{h}^{-1}(x') = \widehat{\sect}_1(y)$, and we obtain
    \[
    (h \widetilde{h}^{-1} g)\bigl(\widehat{\sect}(z)\bigr) =
    h\bigl(\widetilde{h}^{-1}(x')\bigr) =
    h\bigl(\widehat{\sect}_1(y)\bigr) \underset{\eqref{2eqYys}}{=} x.
    \]
    This shows that $G \times Z \xlongrightarrow{(\id,\sect)} G \times
    X \xlongrightarrow{\act_X} X$ is surjective, as claimed.
  \item[\eqref{2dExcellent3}] Let $Z' \to Z$ be a morphism of schemes,
    and set $Y' = Y \times_Z Z'$ and $X' = X \times_Y Y'$. With
    $\qu_i'$ obtained by base change, the diagram%
    \DIAGV{55}%
    {X'} \n{\Ear{\sm{\qu_1'}}} \n{Y'} \n{\Ear{\sm{\qu_2'}}} \n{Z'}
    \nn%
    {\sar} \n{\square} \n{\sar} \n{\square} \n{\sar} \nn%
    {X} \n{\Ear{\sm{\qu_1}}} \n{Y} \n{\Ear{\sm{\qu_2}}} \n{Z}%
    \diag%
    is cartesian, so $X' = X \times_Z Z'$ and
    $\qu' = \qu_2' \circ \qu_1'$. By
    \rref{2rExcellent}\eqref{2rExcellentD}, the $\qu_i'$ are excellent
    quotients, and in order to prove~\eqref{2dExcellent3} for $\qu'$
    we may replace $X'$, $Y'$ and $Z'$ by the original $X$, $Y$ and
    $Z$. The assertion $\gs{X}^G = \gs{Z}$ would be trivial if $G$
    were a group. But since it is a group scheme we need to do more.

    It follows from~\eqref{2dExcellent1} that the map $\gs{Z} \to
    \gs{X}$ has its image inside $\gs{X}^G$. For the converse, take $f
    \in \gs{X}^G = \Hom_\ZZ(X,\AA^1)^G$, so the
    diagram~\eqref{2eqInvariant} (with $X'$ replaced by $X$)
    commutes. By restricting the action to $H$ and since $X
    \xlongrightarrow{\qu_1} Y$ is an excellent quotient by $H$ we
    obtain $g \in \Hom_\ZZ(Y,\AA^1)$ with $f = g \circ \qu_1$. We
    claim that $g \in \gs{Y}^G$. The diagram \vspace{-2mm}%
    \DIAGV{48}%
    {G \times Y} \n{} \n{} \n{\Earv{\sm{\act_Y}}{190}} \n{} \n{} \n{Y}
    \nn%
    {} \n{\Nwarv{\sm{(\id,\qu_1)}}{45}} \n{} \n{} \n{}
    \n{\Near{\sm{\qu_1}}} \nn%
    {} \n{} \n{G \times X} \n{\Ear{\sm{\act_X}}} \n{X} \nn%
    {\Sarv{\sm{\pr_2}}{200}} \n{} \n{\Sar{\sm{\pr_2}}} \n{} \n{} \n{}
    \n{\saRv{\sm{g}}{200}} \nn%
    {} \n{} \n{X} \n{} \n{} \n{\Ssear{\sm{f}}} \nn%
    {} \n{\swaR{\sm{\qu_1}}} \n{} \n{} \n{\eseaR{\sm{f}}} \nn%
    {Y} \n{} \n{} \n{\Earv{\sm{g}}{200}} \n{} \n{} \n{\AA^1}%
    \diag%
    commutes. We need to show that $g \circ \act_Y = g \circ
    \pr_2$. Both functions are elements of $\gs{G \times Y}$, and the
    diagram shows that mapping them into $\gs{G \times X}$ yields the
    same element. But since $G \times X \to G \times Y$ is faithfully
    flat, the map $\gs{G \times Y} \to \gs{G \times X}$ is injective
    (see the argument made after~\eqref{2eqCart}). So $g \in
    \gs{Y}^G$, and since $\map{\qu_2}{Y}{Z}$ is a excellent quotient
    by $G$ it follows that there is $h \in \Hom_Z(Z,\AA^1)$ with $g =
    h \circ \qu_2$. So $f = h \circ \qu$, and the proof is
    complete. \qed
  \end{enumerate}
  \renewcommand{\qed}{}
\end{proof}

\section{The additive group as a normal subgroup}
\label{3sAmbient}%

If the additive group $\Ga$ acts on an affine scheme $\Spec(R)$, we
know from \tref{1tSlice} that by choosing a local slice with
denominator~$c$ one obtains an excellent quotient
$\Spec(R_c) \to \Spec(R_c^\Ga)$. Now we assume that $\Ga$ appears as a
normal subgroup in a connected solvable group $G$, and wish to build
an excellent quotient by $G$ by working upwards along a chain of
normal subgroups with factor groups $\Ga$ and $\Gm$ (the
multiplicative group, which will be dealt with in \sref{5sTorus}), and
using \tref{2tSubgroup} in each step. But this only works if~$c$ is
chosen in such a way that $G$ acts on $R_c^\Ga$. This is the case
if~$c$ is a semi-invariant (see after \dref{4dSolvable}). A rather
straightforward strategy for producing a local slice with a
semi-invariant denominator, which would work in the case that the
ground ring $K$ is a field and $R$ is a domain, is the following: One
shows that the denominators of local slices form a $G$-stable
$K$-subspace of $R$. Choose a nonzero finite-dimensional $G$-stable
subspace. Inside this, the fixed space of the unipotent radical is
nonzero, and it decomposes into a direct sum of spaces of
semi-invariants of the torus sitting at the top of $G$. Picking a
semi-invariant in that space yields the desired denominator of a local
slice. Essentially, this is the approach taken by \mycite{Sancho:2000}
for showing that there exists a geometric quotient.

For the following reasons we choose a different, more involved
approach:
\begin{enumerate}
  \renewcommand{\theenumi}{\arabic{enumi}}
\item $K$ may not be a field and $R$ may not be a domain.
\item We wish to obtain a fast and simple algorithm that avoids the
  Gr\"obner basis computations and even the linear algebra that would
  be involved in putting the above strategy into practice.
\end{enumerate}

Instead of assuming $G$ to be a connected solvable group right away,
it is convenient to take $G$ as an affine group scheme with an
embedding of $\Ga$ as a normal subgroup.  Under rather mild
assumptions, this implies that the map $G \to G/\Ga =: H$ splits,
i.e., there is a morphism $\map{\sect}{H}{G}$ of {\em schemes} (not
group schemes) such that $H \xlongrightarrow{\sect} G \to H$ is the
identity, and this yields an isomorphism $G \cong \Ga \times H$ of
schemes (see \mycite[Corollary~1, page~100]{Rosenlicht:1963},
\mycite[Splitting Lemma, page~147]{KMT:1974}). This justifies making
the existence of such a splitting into an {\em assumption}. More
precisely, we work in the following setup:

$G$ and $H$ are affine group schemes over a ring $K$. There is a
morphism $\map{\emb}{\Ga}{G}$ of group schemes (with $\Ga$ the
additive group over $K$) and a morphism $\map{\sect}{H}{G}$ of schemes
such that the composition
$\iso\mbox{:}\ \Ga \times H \xlongrightarrow{(\emb,\sect)} G \times G
\xlongrightarrow{\mult} G$
is an isomorphism of schemes. It is easy to see that we may assume
that $\sect$ takes the identity of $H$ to the identity of $G$. We make
the normal subgroup assumption precise as follows: $H$ acts on $\Ga$
by automorphisms, with the action given by a morphism
$\map{\conj}{H \times \Ga}{\Ga}$, such that the diagram \vspace{-2mm}%
\DIAGV{50}%
{H \times \Ga} \n{} \n{\Earv{\sm{(\conj,\id_H)}}{80}} \n{} \n{\Ga
  \times H} \nn%
{\Sar{\sm{(\sect,\emb)}}} \n{} \n{} \n{} \n{\saR{\sm{(\emb,\sect)}}}
\nn%
{G \times G} \n{} \n{} \n{} \n{G \times G} \nn%
{} \n{\seaR{\sm{\mult}}} \n{} \n{\swaR{\sm{\mult}}} \nn%
{} \n{} \n{G}%
\diag%
commutes. If we write $H = \Spec(A)$ then $\conj$ induces a
homomorphism of $K$-algebras $K[z] \to A[z]$, and it is easy to see
that~$z$ must be sent to $\chi \cdot z$ with $\chi \in A$
invertible. (Viewing $\chi$ as a morphism $H \to \AA^1_K$, it must be
a character of $H$.)

Now let $X = \Spec(R)$ be an affine $K$-scheme with a morphic action
$\map{\act}{G \times X}{X}$. Then the action
$\Ga \times X \xlongrightarrow{(\emb,\id)} G \times X
\xlongrightarrow{\act} X$
induces a homomorphism $\map{\phi}{R}{R[z]}$, and the morphism
$H \times X \xlongrightarrow{(\sect,\id)} G \times X
\xlongrightarrow{\act} X$
(which is not an action) induces $\map{\psi}{R}{A \otimes R}$ (where
this and all other tensor products are over $K$). With $\Ga$ acting
trivially on $H$, it also acts on $H \times X$. The homomorphism
induced by this action is
$\map{\id_A \otimes \phi}{A \otimes R}{(A \otimes R)[z]}$.

As we will see, the following lemma contains everything that is needed
to construct a simple and fast algorithm (\aref{4aSlice}) for
producing a local slice with a semi-invariant denominator, as
discussed above.

\begin{lemma} \label{3lPhi}%
  In the above situation, let $s \in R$ be nonzero and write $\phi(s)
  = \sum_{i=0}^d c_i z^i$ with $c_i \in R$, $c_d \ne 0$.
  \begin{enumerate}
  \item \label{3lPhiA} We have
    \begin{equation} \label{3eqPhiPsi}%
      (\id_A \otimes \phi)\bigl(\psi(s)\bigr) = \sum_{i=0}^d \chi^i
      \psi(c_i) z^i.
    \end{equation}
    Moreover, $\chi^d \psi(c_d) \ne 0$. In particular,
    $\deg\bigl(\psi(s)\bigr) = \deg(s)$, with the degree as defined in
    \sref{1sGa}.
  \item \label{3lPhiB} If~$s$ is a local slice, then so is~$\psi(s)$.
  \end{enumerate}
\end{lemma}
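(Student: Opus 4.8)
The plan is to prove the lemma by establishing \eqref{3eqPhiPsi} first and then feeding it into part~\eqref{3lPhiB}. For \eqref{3lPhiA} the crux is the conjugation relation encoded in the commutative square preceding the lemma, which at the level of points reads $\sect(h)\,\emb(a)=\emb(\conj(h,a))\,\sect(h)$ with $\conj(h,a)=\chi(h)\,a$. I would track $s$ along the composition defining $(\id_A\otimes\phi)\circ\psi$: the map $\psi$ records $s(\sect(h)\cdot x)$, and $\id_A\otimes\phi$ lets $\emb(a)$ act on the $X$-variable, giving $s\bigl(\sect(h)\,\emb(a)\cdot x\bigr)$. Rewriting $\sect(h)\,\emb(a)=\emb(\chi(h)a)\,\sect(h)$ and expanding $\phi$ (so that the $\Ga$-parameter is scaled by $\chi$) produces $\sum_i\chi^i\psi(c_i)z^i$, which is \eqref{3eqPhiPsi}; rigorously this is a chase of comodule maps, but the content is exactly this substitution. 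For the nonvanishing of $\chi^d\psi(c_d)$ I would note that $\psi$ is split injective, with left inverse $\epsilon\otimes\id_R$, where $\epsilon\colon A\to K$ is the counit from the identity section (here one uses $\sect(e)=e$); hence $\psi(c_d)\neq0$, and since $\chi\in A^\times$ the factor $\chi^d$ is a unit of $A\otimes R$, so $\chi^d\psi(c_d)\neq0$ and $\deg(\psi(s))=d$.

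For part~\eqref{3lPhiB} the three conditions of \dref{1dSlice} must be verified for $\psi(s)$ under the action $\id_A\otimes\phi$ on $A\otimes R$. That $\psi(s)$ is a noninvariant of degree $d$ is immediate from \eqref{3lPhiA} together with $d\ge1$; that its highest coefficient $\chi^d\psi(c)$ is invariant follows since $A\otimes1\subseteq(A\otimes R)^\Ga$ (as $\Ga$ acts trivially on $H$) and $\psi(c)\in(A\otimes R)^\Ga$, the latter obtained by applying \eqref{3eqPhiPsi} with $c$ in place of $s$ (then $\phi(c)=c$ collapses the right-hand side to $\psi(c)$). The only real content is the localization condition: every element of $(A\otimes R)_{\chi^d\psi(c)}=(A\otimes R)_{\psi(c)}$ of degree $<d$ must be invariant. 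I expect this to be the main obstacle, because we localize at $\psi(c)$ rather than at $1\otimes c$, so $(A\otimes R)_{\psi(c)}$ is \emph{not} simply $A\otimes R_c$ and the local slice property of $s$ on $R$ does not transfer directly.

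To resolve this I would introduce the automorphism $\Theta$ of $H\times X$ over $H$ given on points by $(h,x)\mapsto(h,\sect(h)\cdot x)$, with inverse $(h,x)\mapsto(h,\sect(h)^{-1}\cdot x)$. Its comodule automorphism $\Theta^*$ of $A\otimes R$ fixes $A\otimes1$ and restricts to $\psi$ on $1\otimes R$, so $\Theta^*(1\otimes s)=\psi(s)$ and $\Theta^*(1\otimes c)=\psi(c)$; in particular $\Theta^*$ carries $(A\otimes R)_{1\otimes c}=A\otimes R_c$ isomorphically onto $(A\otimes R)_{\psi(c)}$. Using the conjugation relation once more, one checks that $\Theta^*$ conjugates the action $\id_A\otimes\phi$ into its reparametrization by the group automorphism $z\mapsto\chi^{-1}z$ of $\Ga$ over $H$. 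Since such a reparametrization (scaling $\Ga$ by a unit) changes neither the invariant ring nor the degree function, and alters denominators only by units, being a local slice is literally the same condition for both actions. It therefore suffices to prove that $1\otimes s$ is a local slice for $\id_A\otimes\phi$ with denominator $1\otimes c$, and then transport along $\Theta^*$.

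Finally, $1\otimes s$ is handled cleanly via \tref{1tSlice}\eqref{1tSliceA}: there $R_c=R_c^\Ga[s]$ is a polynomial ring, so $A\otimes R_c=(A\otimes R_c^\Ga)[1\otimes s]$, the subring $A\otimes R_c^\Ga$ lies in the invariants, and $(\id_A\otimes\phi)(1\otimes s)=1\otimes\phi(s)$ has $z$-degree $d$ with the unit $1\otimes c$ as leading coefficient. Writing $u\in A\otimes R_c$ as $\sum_k u_k(1\otimes s)^k$ with $u_k\in A\otimes R_c^\Ga$ invariant, the $z^{md}$-coefficient of $(\id_A\otimes\phi)(u)$ is $u_m(1\otimes c)^m$, where $m$ is the top index with $u_m\neq0$; this is nonzero, so $\deg(u)=md$, whence $\deg(u)<d$ forces $m=0$ and $u\in A\otimes R_c^\Ga$, an invariant. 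This gives the local slice property for $1\otimes s$, and $\Theta^*$ then yields it for $\psi(s)$. I would emphasize that no flatness of $A$ over $K$ is needed at any point, since tensoring preserves both the relevant polynomial-ring structure and the units involved.
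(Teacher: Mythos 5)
Your proposal is correct. Part~\eqref{3lPhiA} is essentially the paper's own argument: the conjugation square yields~\eqref{3eqPhiPsi} (the paper formalizes your pointwise chase as a commutative diagram of schemes inducing one of rings), and the nonvanishing of $\chi^d\psi(c_d)$ is obtained exactly as you do, from the splitting $(\varepsilon \otimes \id_R) \circ \psi = \id_R$ given by the identity section together with $\varepsilon(\chi) = 1$. Part~\eqref{3lPhiB}, however, takes a genuinely different route. The paper works elementwise: for $a \in R$ of degree $<d$ it forms $b := (\eta \otimes \id_R)\bigl((\psi \otimes \id_{K[z]})(\phi(a))\bigr)$, with $\eta$ induced by $H \xlongrightarrow{\sect} G \xlongrightarrow{\inv} G \xlongrightarrow{\iso^{-1}} \Ga \times H$, checks $\deg(b) < d$, applies the local slice property of~$s$ to get $(1 \otimes c_d)^k b \in A \otimes R^\Ga$, and recovers $1 \otimes a$ from~$b$ via $(\mu \otimes \id_R) \circ (\id_A \otimes \psi)$; a general $\tilde{a} \in A \otimes R$ of degree $<d$ is then reduced to pure tensors by writing $\tilde{a} = \sum_i a_i \otimes r_i$ with $\deg(r_i) < d$. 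Your $\Theta^*$ is in fact exactly this composite $(\mu \otimes \id_R) \circ (\id_A \otimes \psi)$ (it sends $a \otimes r$ to $(a \otimes 1)\psi(r)$), so both proofs exploit the same untwisting by $\sect(h)^{\pm 1}$; but you globalize it into an automorphism of $A \otimes R$ that conjugates $\id_A \otimes \phi$ into its reparametrization by the unit $\chi^{-1}$, observe that such a reparametrization changes neither invariants nor the degree function, and then settle the case of $1 \otimes s$ cleanly from \tref{1tSlice}\eqref{1tSliceA}, since $A \otimes R_c = (A \otimes R_c^\Ga)[1 \otimes s]$ with $1 \otimes c$ a unit. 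What your version buys: it is structurally more transparent, and it bypasses the paper's reduction of $\tilde{a}$ to pure tensors with $\deg(r_i) < d$ --- a step the paper asserts without justification and which is the delicate point over a general ground ring $K$; your argument instead shows directly that \emph{every} element of $A \otimes R_c$ of degree $<d$ lies in $A \otimes R_c^\Ga$, and you rightly note that no flatness of $A$ over $K$ is needed. What the paper's version buys: it never leaves the explicit diagram-induced homomorphisms, so it needs neither the verification that $\Theta$ is an automorphism nor the intertwining with the conjugated action --- verifications that are routine on functor points, as you indicate, but that your write-up does leave at the level of a sketch.
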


\begin{proof}
  \begin{enumerate}
  \item[\eqref{3lPhiA}] The outer edges of the commutative diagram
    \vspace{-2mm}%
    \DIAGV{42}%
    {H \times \Ga \times X} \n{} \n{} \n{} \n{} \n{} \n{} \n{}
    \n{\Earv{\sm{(\conj,\id_H,\id_X)}}{500}} \n{} \n{} \n{} \n{} \n{}
    \n{} \n{} \n{\Ga \times H \times X} \nn%
    {\Sar{\sm{(\id_H,\emb,\id_X)}}} \n{} \n{} \n{} \n{} \n{} \n{} \n{}
    \n{} \n{} \n{} \n{} \n{} \n{}\n{} \n{}
    \n{\saR{\sm{(\id_\Ga,\sect,\id_X)}}} \nn%
    {H \times G \times X} \n{} \n{} \n{} \n{} \n{} \n{} \n{} \n{} \n{}
    \n{} \n{} \n{} \n{}\n{} \n{} \n{\Ga \times G \times X} \nn%
    {\Sar{\sm{(\id_H,\act)}}} \n{}
    \n{\Esear{\sm{(\sect,\id_G,\id_X)}}} \n{} \n{} \n{} \n{} \n{} \n{}
    \n{} \n{} \n{} \n{} \n{} \n{\Wswar{\sm{(\emb,\id_G,\id_X)}}} \n{}
    \n{\saR{\sm{(\id_\Ga,\act)}}} \nn%
    {H \times X} \n{} \n{} \n{} \n{G \times G \times X} \n{} \n{} \n{}
    \n{} \n{} \n{} \n{} \n{G \times G \times X} \n{} \n{} \n{} \n{\Ga
      \times X} \nn%
    {} \n{} \n{\ \,\,\eseaR{\sm{(\sect,\id)}}} \n{}
    \n{\Sar{\sm{(\id,\act)}}} \n{}
    \n{\Esear{\!\!\!\!^{(\mult,\id_X\!)\!\!\!\!}}}  \n{} \n{} \n{}
    \n{\Wswar{^{(\mult,\id_X\!)\!\!\!\!}}} \n{}
    \n{\saR{\sm{(\id,\act)}}} \n{} \n{\wswaR{\sm{(\emb,\id)}}} \nn%
    {} \n{} \n{} \n{} \n{G \times X} \n{} \n{} \n{} \n{G \times X}\n{}
    \n{} \n{} \n{G \times X} \nn%
    {} \n{} \n{} \n{} \n{} \n{} \n{\eseaR{\sm{\act}}} \n{}
    \n{\Sar{\sm{\act}}} \n{} \n{\wswaR{\sm{\act}}} \nn%
    {} \n{} \n{} \n{} \n{} \n{} \n{} \n{} \n{X}%
    \diag%
    induce the commutative diagram \vspace{-2mm}%
    \DIAGV{53}%
    {} \n{} \n{R} \nn%
    {} \n{\Swar{\phi}} \n{} \n{\Sear{\psi}} \nn%
    {R[z]} \n{} \n{} \n{} \n{A \otimes R} \nn%
    {\Sar{\psi \otimes \id_{K[z]}}} \n{} \n{} \n{} \n{\saR{\id_A
        \otimes \phi}} \nn%
    {(A \otimes R)[z]} \n{} \n{\Earv{z \mapsto \chi z}{70}} \n{} \n{(A
      \otimes R)[z]}%
    \diag%
    From this~\eqref{3eqPhiPsi} follows directly.

    If $\map{\varepsilon}{A}{K}$ is induced by the identity $\Spec(K)
    \to H$ of $H$, then $(\varepsilon \otimes \id_R) \circ \psi =
    \id_R$ and $\varepsilon(\chi) = 1$. So $(\varepsilon \otimes
    \id_R)\bigl(\chi^d \psi(c_d)\bigr) = c_d \ne 0$, and $\chi^d
    \psi(c_d) \ne 0$ follows.
  \item[\eqref{3lPhiB}] By~\eqref{3lPhiA} the highest coefficient of
    $(\id_A \otimes \phi)\bigl(\psi(s)\bigr)$ is $\chi^d \psi(c_d)$.
    So we need to show that if $a \in A \otimes R$ has $\deg(a) < d$,
    then there exists~$k$ such that
    $\chi^{k d} \psi(c_d)^k a \in A \otimes R^\Ga$. Consider the
    commutative diagram \vspace{-2mm}%
    \DIAGV{35}%
    {H \times X} \n{} \n{} \n{}
    \n{\Earv{\sm{(\operatorname{diag},\id_X)}}{140}} \n{} \n{} \n{}
    \n{H \times H \times X} \n{} \n{} \n{}
    \n{\Earv{\sm{(\id_H,\sect,\id_X)}}{160}} \n{} \n{} \n{} \n{H
      \times G \times X} \n{} \n{} \n{}
    \n{\Earv{\sm{(\id_H,\act)}}{140}} \n{} \n{} \n{} \n{H \times X}
    \nn%
    {\Sar{\sm{\pr_2}}} \n{} \n{} \n{} \n{} \n{} \n{} \n{} \n{} \n{}
    \n{} \n{} \n{} \n{} \n{} \n{} \n{} \n{} \n{} \n{} \n{} \n{} \n{}
    \n{} \n{\Sar{\sm{(\sect,\id_X)}}} \nn%
    {X} \n{} \n{} \n{} \n{} \n{} \n{} \n{} \n{\Warv{\sm{\act}}{500}}
    \n{} \n{} \n{} \n{} \n{} \n{} \n{} \n{G \times X} \n{} \n{} \n{}
    \n{\Warv{\sm{(\inv,\id_X)}}{140}} \n{} \n{} \n{} \n{G \times X}
    \nn%
    {\Nar{\sm{\act}}} \n{} \n{} \n{} \n{} \n{} \n{} \n{} \n{} \n{}
    \n{} \n{} \n{} \n{} \n{} \n{} \n{\Nar{\sm{(\mult,\id_X)}}} \nn%
    {G \times X} \n{} \n{} \n{} \n{} \n{} \n{} \n{} \n{} \n{} \n{}
    \n{} \n{} \n{} \n{} \n{} \n{G \times G \times
        X} \n{\Escurve{\sm{(\iso^{-1},\id_X)}}{130}} \nn%
    {\naR{\sm{(\emb,\id_X)}}} \n{} \n{} \n{} \n{} \n{} \n{} \n{} \n{}
    \n{} \n{} \n{} \n{} \n{} \n{} \n{}
    \n{\Nar{\sm{(\emb,\sect,\id_X)}}} \nn%
    {\Ga \times X} \n{} \n{} \n{}
    \n{\Warv{\sm{(\id_{\Ga},\act)}}{140}} \n{} \n{} \n{} \n{\Ga \times
      G \times X} \n{} \n{} \n{}
    \n{\Warv{\sm{(\id_{\Ga},\sect,\id_X)}}{160}} \n{} \n{} \n{} \n{\Ga
      \times H \times X}%
    \diag%
    in which $\map{\inv}{G}{G}$ is the inversion. This induces the
    commutative diagram \vspace{-2mm}%
    \DIAGV{60}%
    {R} \n{} \n{\Earv{\phi}{100}} \n{} \n{R[z]} \n{} \n{\Earv{\psi
        \otimes \id_{K[x]}}{90}} \n{} \n{A[z] \otimes R} \nn%
    {\saR{r \mapsto 1 \otimes r}} \n{} \n{} \n{} \n{} \n{} \n{} \n{}
    \n{\Sar{\eta \otimes \id_R}} \nn%
    {A \otimes R} \n{} \n{\Warv{\mu \otimes \id_R}{75}} \n{} \n{A
      \otimes A \otimes R} \n{} \n{\Warv{\id_A \otimes \psi}{75}} \n{}
    \n{A \otimes R}%
    \diag%
    in which $\map{\mu}{A \otimes A}{A}$ is given by multiplication
    and $\map{\eta}{A[z]}{A}$ is the homomorphism induced by the
    composition
    $H \xlongrightarrow{\sect} G \xlongrightarrow{\inv} G
    \xlongrightarrow{\iso^{-1}} \Ga \times H$.
    First let $a \in R$ with $\deg(a) < d$. By~\eqref{1eqBinom}, all
    coefficients of $\phi(a)$ have degree $< d$, so by~\eqref{3lPhiA}
    the same is true for all coefficients of
    $(\psi \otimes \id_{K[z]})\bigl(\phi(a)\bigr)$. It follows that
    \[
    b := (\eta \otimes \id_R)\Bigl((\psi \otimes
    \id_{K[z]})\bigl(\phi(a)\bigr)\Bigr) \in A \otimes R
    \]
    has degree $< d$. Since~$s$ is a local slice, this implies that
    there is a nonnegative integer~$k$ such that
    $(1 \otimes c_d)^k b \in A \otimes R^\Ga$. The diagram implies
    $(\mu \otimes \id_R)\bigl((\id_A \otimes \psi)(b)\bigr) = 1
    \otimes a$. Moreover,
    \[
    (\mu \otimes \id_R)\bigl((\id_A \otimes \psi)(1 \otimes c_d)\bigr)
    = (\mu \otimes \id_R)\bigl(1 \otimes \psi(c_d)\bigr) = \psi(c_d),
    \]
    and we obtain
    \[
    \psi(c_d)^k (1 \otimes a) = (\mu \otimes \id_R)\bigl((\id_A
    \otimes \psi)\bigl((1 \otimes c_d)^k b)\bigr).
    \]
    By~\eqref{3lPhiA}, applying~$\psi$ to an element of $R^\Ga$ yields
    an element of $A \otimes R^\Ga$, so $(\id_A \otimes \psi)\bigl((1
    \otimes c_d)^k b)\bigr) \in A \otimes A \otimes R^\Ga$, which is
    mapped into $A \otimes R^\Ga$ by $\mu \otimes \id_R$. This shows
    that $\psi(c_d)^k (1 \otimes a) \in A \otimes R^\Ga$, so also
    $\chi^{k d} \psi(c_d)^k (1 \otimes a) \in A \otimes R^\Ga$.

    Now let $\tilde{a} \in A \otimes R$ with $\deg(\tilde{a}) <
    d$. Write~$\tilde{a}$ as a finite sum $\tilde{a} = \sum_i a_i
    \otimes r_i$ with $a_i \in A$ and $r_i \in R$ such that $\deg(r_i)
    < d$. But by the above, there exists~$k$ such that $\chi^{k d}
    \psi(c_d)^k (1 \otimes r_i) \in A \otimes R^\Ga$, from which the
    claim follows.  \qed
  \end{enumerate}
  \renewcommand{\qed}{}
\end{proof}

\section{Unipotent group actions} \label{4sUnipotent}%

In this section we give an algorithm, built on the previous section,
that produces a local slice for an action of an additive group that
appears as a normal subgroup of a connected solvable group, such that
the denominator of the local slice is a semi-invariant. From this, we
construct an algorithm for computing invariants of a unipotent group,
which (for later purposes) is also assumed to be contained in a
connected solvable group.

If $G$ is a connected solvable linear algebraic group over an
algebraically closed field $K$, a lot is known about its structure
(see \mycite[Section~19]{Humphreys}): The factor group $G/U$ by the
unipotent radical is a torus, and $U$ has a chain of subgroups, normal
in $G$, such that all factor groups are isomorphic to $\Ga$. Moreover,
as a variety, $U$ is isomorphic to $\AA^n_K$ ($n = \dim(U)$), with an
isomorphism that is consistent with the subgroup chain just mentioned
(see \mycite[Corollary~2, page~101]{Rosenlicht:1963}). This justifies
making this structure into an assumption for a group scheme in our
more general setting, even though such examples as the group
$\SO_2(K)$ over a field $K$ in which~$-1$ is not a square do not meet
this assumption. In fact, for purposes of stating algorithms, we
assume that the group scheme is given in a way that reflects the above
structure. This is a mild assumption since in practice a connected
solvable group will almost always be given in such a way, for example
if it is defined as a closed subgroup of the group of invertible upper
triangular matrices.

\begin{defi} \label{4dSolvable}%
  A group scheme $G$ over a ring $K$ is said to be in \df{standard
    solvable form} if $G = \Spec\bigl(K[z_1 \upto z_l,t_1 \upto
  t_m,t_1^{-1} \upto t_m^{-1}]\bigr)$ with~$l$ and~$m$ nonnegative
  integers such that:
  \begin{enumerate}
    \renewcommand{\theenumi}{\arabic{enumi}}
  \item \label{4dSolvable1} The closed subscheme $G_i \subseteq G$
    given by the ideal $(z_{i+1} \upto z_l,t_1 - 1 \upto t_m - 1)$ ($i
    = 0 \upto l$) is a normal subgroup. We write $U = G_l$.
  \item \label{4dSolvable2} The morphism $G_i \to \Ga$ given by
    $K[z] \to K[G_i] = K[z_1 \upto z_i], \ z \mapsto z_i$ is a
    morphism of group schemes.
  \item \label{4dSolvable3} With $T := Spec\bigl(K[t_1^{\pm 1} \upto
    t_m^{\pm}]\bigr)$ the $m$-dimensional torus, the morphism $G \to
    T$ given by $t_j \mapsto t_j$ is a morphism of group schemes.
  \end{enumerate}
  If $G$ is in standard solvable form, the torus $T$ acts on each
  $G_i/G_{i-1} \cong \Ga$ by conjugation. The actions are given by
  characters~$\chi_i$ ($i = 1 \upto l$), which are power products of
  the $t_j^{\pm 1}$.
\end{defi}

It is intuitively clear that with
$\Ga \stackrel{\sim}{\longrightarrow} G_1 \to G$ and
$H = \Spec\bigl(K[z_2 \upto z_l,t_1^{\pm 1} \upto t_m^{\pm 1}]\bigr)$
the hypotheses of \lref{3lPhi} are satisfied. The formal verification
of this is a bit tedious and left to the reader.

If a group scheme $G$ in standard solvable form acts on an affine
scheme $\Spec(R)$ with the action given by a homomorphism
$\map{\Phi}{R}{R[z_1 \upto z_l,t_1^{\pm 1}\upto z_m^{\pm 1}]}$, then
an element $c \in R$ is called a \df{semi-invariant} of weight~$\chi$
if $\Phi(c) = \chi \cdot c$, where $\chi = \prod_{j=1}^m t_i^{e_i}$
with~$e_j$ integers. In this case the action extends to $\Spec(R_c)$
by $\Phi(\frac{a}{c^k}) := \chi^{-k} \frac{\Phi(a)}{c^k}$ for $a \in
R$.

We now come to the algorithm for producing a local slice whose
denominator is a semi-invariant. As in \aref{1aSlice} we assume that
it is possible to perform addition, multiplication, and zero testing
of elements of $R$. Notice that the algorithm does not require any
Gr\"obner basis computations and not even linear algebra (unless the
underlying computations in $R$ require Gr\"obner bases).

\begin{alg}[Computation of a local slice with semi-invariant
  denominator] \label{4aSlice} \mbox{}%
  \begin{description}
  \item[\bf Input] A group scheme $G$ in standard solvable form acting
    on an affine scheme $\Spec(R)$ with $R = K[a_1 \upto a_n]$ a
    finitely generated algebra, where the action given is by a
    homomorphism
    $\map{\Phi}{R}{R[z_1 \upto z_l,t_1^{\pm 1}\upto z_m^{\pm
        1}]}$.
    Assume that the characters
    $\chi_i \in K[t_1^{\pm 1}\upto z_m^{\pm 1}]$ as in
    \dref{4dSolvable} are given, and that the subgroup $G_1 \cong \Ga$
    acts nontrivially.
  \item[\bf Output] A local slice $s \in R$ of degree~$d$ with
    denominator~$c$ for the action of $G_1$ such that~$c$ is a
    semi-invariant. Moreover, a homomorphism
    $\map{\pi}{R_c}{R_c^{G_1}}$ of ${R_c^{G_1}}$-algebras, given by
    the $\pi(a_i)$, with $\ker(\pi) = (s)$.
  \end{description}
  \begin{enumerate}
    \renewcommand{\theenumi}{\arabic{enumi}}
  \item \label{4aSlice1} For $i = 1 \upto l$, let
    $\map{\phi_i}{R}{R[z_i]}$ be the homomorphism obtained by
    composing~$\Phi$ with the map fixing~$z_i$, and sending the
    other~$z_j$ to~$0$ and the~$t_j$ to~$1$. Apply \aref{1aSlice} to
    $\phi_1$. Let $s \in R$ be the resulting local slice of degree~$d$
    with denominator~$c$.
  \item \label{4aSlice2} For $i = 2 \upto l$ repeat
    step~\ref{4aSlice3}.
  \item \label{4aSlice3} With~$k$ be the degree of $\phi_i(c)$,
    redefine~$s$ to be the coefficient of~$z_i^k$ in $\phi_i(s)$,
    and~$c$ to be the coefficient of~$z_i^k$ in $\phi_i(c)$. Now~$s$
    is a local slice of degree~$d$ with denominator~$c$, and $c \in
    R^{G_i}$.
  \item \label{4aSlice4} Compute $\Phi(c) \in R[t_1^{\pm 1} \upto
    t_m^{\pm 1}]$ and choose a monomial~$t^*$ occurring in this Laurent
    polynomial. If $d > 1$ and $R$ is not a domain,~$t^*$ has to be
    chosen as the leading monomial of $\Phi(c)$ with respect to an
    arbitrary monomial ordering. Redefine~$s$ to be the coefficient of
    $\chi_1^d \cdot t^*$ in $\Phi(s)$, and~$c$ to be the coefficient
    of~$t^*$ in $\Phi(c)$. Now~$s$ is a local slice of degree~$d$ with
    denominator~$c$, and~$c$ is a semi-invariant with weight~$t^*$.
  \item \label{4aSlice5} With $g := \phi_1(s)$ and
    $f_i := \phi_1(a_i)$, obtain $r_i \in R_c$ by division with
    remainder:
    \[
    f_i = q_i g + r_i
    \]
    with $q_i \in R_c[z_1]$. Then $r_i \in R_c^G$ and $\pi(a_i) = r_i$.
  \end{enumerate}
\end{alg}

\begin{rem} \label{4rSlice}%
  The requirement that $R$ be finitely generated is only used for
  producing a local slice by \aref{1aSlice}. Since local slices always
  exist by \rref{1rSlice}, the algorithm proves the existence of a
  local slice with semi-invariant denominator also when $R$ is not
  finitely generated.
\end{rem}

\begin{proof}[Proof of correctness of \aref{4aSlice}]
  After step~\ref{4aSlice1}, $s$ is a local slice of degree~$d$ with
  denominator $c \in R^{G_1}$. To prove the correctness of
  step~\ref{4aSlice3}, we assume, using induction on $i \ge 2$,
  that~$s$ is a local slice of degree~$d$ with denominator
  $c \in R^{G_{i-1}}$. The factor group $G_i/G_{i-1} \cong \Ga$ acts
  on $R^{G_{i-1}}$ by (the restriction of)~$\phi_i$. So it follows
  by~\eqref{1eqBinom} that the highest coefficient $c'$ of
  $\phi_i(c)$, which is the ``new''~$c$, lies in $R^{G_i}$. We apply
  \lref{3lPhi} to the action of $G_i$ on $\Spec(R)$. The algebra $A$
  from the lemma is $A = K[z_2 \upto z_i]$, and we have to consider
  that map $\map{\psi_i}{R}{R[z_2 \upto z_i]}$ obtained by
  composing~$\Phi$ with the map fixing~$z_2 \upto z_i$ and sending
  $z_1,z_{i+1} \upto z_l$ to~$0$ and all the~$t_j$ to~$1$. The lemma
  tells us that $\psi_i(s)$ is a local slice of degree~$d$ with
  denominator $\psi_i(c)$. Since $c \in R^{G_{i-1}}$, we have
  $\psi_i(c) = \phi_i(c)$. By \lref{3lPhi}, the $z_1^d$-coefficient of
  $\phi_1\bigl(\psi_i(s)\bigr)$ (with~$\phi_1$ applied
  coefficient-wise to $\psi_i(s) \in R[z_2 \upto z_i]$) is
  $\phi_1\bigl(\psi_i(s)\bigr)_d = \psi_i(c)$, so for the
  coefficient~$s'$ of the monomial $z_i^k$ in $\psi_i(s)$ we have
  \[
  \phi_1(s')_d = c'.
  \]
  Taking the coefficient of $z_i^k$ in $\psi_i(s)$ is the same as
  taking the coefficient of $z_i^k$ in $\phi_i(s)$, so~$s'$ is the
  ``new''~$s$. To show that~$s'$ is a local slice, let $a \in R$ with
  $\deg(a) < d$. Since $\phi_i(c)$ is the denominator of the local
  slice $\psi_i(s)$, multiplying~$a$ by a high enough power of
  $\phi_i(c)$ sends it into $R^{G_1}[z]$, so multiplying it by a high
  enough power of~$c'$ sends it into $R^{G_1}$.

  So when the algorithm reaches step~\ref{4aSlice4},~$s$ is a local
  slice with denominator $c \in R^U$ with $U = G_l$. The factor group
  $G/U \cong T$ (the $m$-dimensional torus) acts on $R^U$ with the
  action given by~$\Phi$. So $\Phi(c) \in R[t_1^{\pm 1} \upto t_m^{\pm
    1}]$, and by \lref{4lTorus}, which we prove below, the
  coefficient~$c'$ of any monomial~$t^*$ is a semi-invariant with
  weight~$t^*$. We apply \lref{3lPhi}, so in this case
  $\map{\psi}{R}{R[z_2 \upto z_l,t_1^{\pm 1} \upto t_m^{\pm 1}]}$ is
  the composition of~$\Phi$ with sending~$z_1$ to~$0$. As above, we obtain
  \[
  \phi_1\bigl(\psi(s)\bigr)_d = \chi_1^d \psi(c).
  \]
  This is an equality of (Laurent-)polynomials in $R[z_2 \upto
  z_l,t_1^{\pm 1} \upto t_m^{\pm 1}]$, on which~$\phi_1$ is applied
  coefficient-wise, so comparing the coefficients of $\chi_1^d \cdot
  t^*$ shows that for the coefficient~$s'$ of $\chi_1^d \cdot t^*$ in
  $\psi(s)$ we have
  \[
  \phi_1(s')_d = c'.
  \]
  But~$s'$ is also the coefficient of $\chi_1^d \cdot t^*$ in
  $\Phi(s)$, which is the ``new''~$s$. Since~$c'$ is the ``new''~$c$,
  we are done if we can show that~$s'$ is a local slice of
  degree~$d$. By \lref{3lPhi}, the degree of~$\psi(s)$ is~$d$, so
  $\deg(s') \le d$. But since $c' \ne 0$, the above equation shows
  that $\deg(s') = d$. If $d = 1$ or if $R$ is a domain, then all
  elements of $R$ of degree~$d$ are local slices (see \rref{1rSlice})
  and we are done. If $d > 1$ and $R$ is not a domain (and so~$t^*$ is
  the leading monomial of $\Phi(c)$), then the proof uses
  \lref{3lPhi}\eqref{3lPhiB} and works as above.

  The correctness of step~\ref{4aSlice5} follows directly from
  \tref{1tSlice}\eqref{1tSliceB}.
\end{proof}

The following lemma, which is surely folklore, was used in the above
proof and will be used later, too.

\begin{lemma} \label{4lTorus}%
  Let $T = \Spec\bigl(K[t_1^{\pm 1} \upto t_m^{\pm 1}]\bigr)$ be an
  $m$-dimensional torus over a ring $K$, acting on an affine
  $K$-scheme $X = \Spec(R)$ by a morphism $\map{\act}{T \times
    X}{X}$. With $\map{\Phi}{R}{R[t_1^{\pm 1} \upto t_m^{\pm 1}]}$ the
  induced homomorphism, let $a \in R$ and, for a power product~$t$ of
  the $t_i^{\pm 1}$, let $a_t \in R$ be the coefficient of~$t$ in
  $\Phi(a)$. Then $a_t$ is a semi-invariant of weight~$t$.
\end{lemma}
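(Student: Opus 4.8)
The plan is to use the fact that a morphic torus action is precisely a comodule structure on $R$ over the Hopf algebra $A := K[t_1^{\pm 1} \upto t_m^{\pm 1}]$. First I would record the relevant part of this structure: the comultiplication $\map{\Delta}{A}{A \otimes A}$ coming from the group multiplication on $T$ sends $t_j \mapsto t_j \otimes t_j$, so every monomial $t = \prod_j t_j^{e_j}$ is group-like, i.e. $\Delta(t) = t \otimes t$. Identifying $R[t_1^{\pm 1} \upto t_m^{\pm 1}]$ with $R \otimes A$, the associativity of $\act$ translates into the coassociativity identity
\[
(\Phi \otimes \id_A) \circ \Phi = (\id_R \otimes \Delta) \circ \Phi,
\]
both sides being maps $R \to R \otimes A \otimes A$. (The counit axiom $(\id_R \otimes \varepsilon) \circ \Phi = \id_R$, with $\varepsilon(t_j) = 1$, is not even needed for this statement.)

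Next I would expand $\Phi(a) = \sum_t a_t \otimes t$, a finite sum over monomials $t$, and evaluate both sides of the coassociativity identity on $a$. Since each $t$ is group-like, the right-hand side becomes
\[
(\id_R \otimes \Delta)\Bigl(\sum_t a_t \otimes t\Bigr) = \sum_t a_t \otimes t \otimes t.
\]
The left-hand side is $\sum_t \Phi(a_t) \otimes t$; writing $\Phi(a_t) = \sum_s (a_t)_s \otimes s$ turns this into $\sum_t \sum_s (a_t)_s \otimes s \otimes t$.

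The final step is to compare coefficients in $R \otimes A \otimes A$. Because $A$ is a Laurent polynomial ring, it is free as a $K$-module on the monomials, so the elements $s \otimes t$ form an $R$-module basis of $R \otimes A \otimes A$; this freeness is exactly what makes the coefficient comparison legitimate over an arbitrary ground ring $K$. Matching the coefficient of $s \otimes t$ yields $(a_t)_s = a_t$ when $s = t$ and $(a_t)_s = 0$ otherwise, whence $\Phi(a_t) = a_t \otimes t = t \cdot a_t$; that is, $a_t$ is a semi-invariant of weight $t$. I do not expect a genuine obstacle here: the only points needing care are the careful passage from the geometric action axiom to the coassociativity of $\Phi$, and the observation that freeness of $A$ over $K$ validates the coefficient comparison without any hypothesis on $K$.
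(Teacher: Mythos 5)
Your proposal is correct and is essentially the paper's own proof in comodule dress: the paper derives the identity from the commutative diagram expressing $\act \circ (\mult,\id_X) = \act \circ (\id_T,\act)$, which is precisely your coassociativity $(\Phi \otimes \id_A)\circ\Phi = (\id_R \otimes \Delta)\circ\Phi$, and its auxiliary variables $s_1,\ldots,s_m$ with the substitution $t_j \mapsto s_j t_j$ are exactly your first tensor factor of $A \otimes A$ together with $\Delta(t_j) = t_j \otimes t_j$. The concluding coefficient comparison over the monomial basis (valid by freeness of $A \otimes A$ over $K$, as you note) matches the paper's comparison of the coefficients of $s_1^{e_1}\cdots s_m^{e_m}$.
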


\begin{proof}
  The commutative diagram \vspace{-3mm}%
  \DIAGV{55}%
  {T \times T \times X} \n{} \n{\Earv{\sm{(\id_T,\act)}}{70}} \n{}
  \n{T \times X} \nn%
  {\Sar{\sm{(\mult,\id_X)}}} \n{} \n{} \n{} \n{\Sar{\sm{\act}}} \nn%
  {T \times X} \n{} \n{\Earv{\sm{\act}}{90}} \n{} \n{X}%
  \diag%
  induces the commutative diagram \vspace{-3mm}%
  \DIAGV{60}%
  {R} \n{} \n{\Earv{\sm{\Phi}}{100}} \n{} \n{R[\underline{t}^{\pm 1}]}
  \nn%
  {\Sar{\sm{\Phi_s}}} \n{} \n{} \n{} \n{\saR{\sm{t_j \mapsto s_j \cdot
        t_j}}} \nn%
  {R[\underline{s}^{\pm 1}]} \n{}
  \n{\Earv{\sm{\id_{K[\underline{s}^{\pm 1}]} \otimes \Phi}}{75}} \n{}
  \n{R[\underline{s}^{\pm 1},\underline{t}^{\pm 1}]}%
  \diag%
  in which $s_1 \upto s_m$ are new indeterminates and~$\Phi_s$ is the
  composition $R \xlongrightarrow{\Phi} R[\underline{t}^{\pm 1}]
  \xlongrightarrow{t_j \mapsto s_j} R[\underline{s}^{\pm 1}]$. So if
  $\Phi(a) = \sum_{e_1 \upto e_m \in \ZZ} a_{e_1 \upto e_m} t_1^{e_1}
  \cdots t_m^{e_m}$, then
  \[
  \sum_{e_1 \upto e_m \in \ZZ} \Phi(a_{e_1 \upto e_m}) s_1^{e_1}
  \cdots s_m^{e_m} = \sum_{e_1 \upto e_m \in \ZZ} a_{e_1 \upto e_m}
  (s_1 t_1)^{e_1}
  \cdots (s_m t_m)^{e_m}.
  \]
  For every $(e_1 \upto e_n) \in \ZZ^m$ the yields
  $\Phi(a_{e_1 \upto e_m}) = a_{e_1 \upto e_m} \cdot t_1^{e_1} \cdots
  t_m^{e_m}$, which was our claim.
\end{proof}

We can now apply \aref{4aSlice} iteratively along a chain of subgroups
and obtain an algorithm for computing $R_c^U$ with $U$ a unipotent
group. The algorithm requires that addition and multiplication of
elements of $R$ are possible, and that for every $c \in R$, zero
testing in $R_c$ is possible. Recall that for a group scheme $G$ in
standard solvable form, $U = G_l$ stands for its unipotent radical,
with the special case $G = U$ possible.

\begin{alg}[Unipotent group invariants] \label{4aUnipotent} \mbox{}%
  \begin{description}
  \item[\bf Input] A group scheme $G$ in standard solvable form acting
    on an affine scheme $\Spec(R)$ with $R = K[a_1 \upto a_n]$ a
    finitely generated algebra, with the action given by a
    homomorphism $\map{\Phi}{R}{R[z_1 \upto z_l,t_1^{\pm 1}\upto
      z_m^{\pm 1}]}$. Assume that the characters $\chi_i \in
    K[t_1^{\pm 1}\upto z_m^{\pm 1}]$ as in \dref{4dSolvable} are
    given.
  \item[Output]
    \begin{itemize}
    \item A semi-invariant $c \in R$, nonzero if $R \ne \{0\}$.
    \item A homomorphism $\map{\pi}{R_c}{R_c^U}$ of $R_c^U$-algebras
      given by the $b_i := \pi(a_i)$, so $R_c^U = K[c^{-1},b_1 \upto
      b_n]$.
    \item Elements $s_1 \upto s_k \in R_c$ such that the map
      $R_c^U[x_1 \upto x_k] \xlongrightarrow{x_i \mapsto s_i} R_c$, is
      an isomorphism and~$\pi$ is equal to the composition $R_c
      \stackrel{\sim}{\longrightarrow} R_c^U[x_1 \upto x_k]
      \xlongrightarrow{x_i \mapsto 0} R_c^U$. In particular,
      $\ker(\pi) = (s_1 \upto s_k)$.
    \end{itemize}
  \end{description}
  \begin{enumerate}
    \renewcommand{\theenumi}{\arabic{enumi}}
  \item \label{4aUnipotent1} Set $c := 1$, $k := 0$, $b_j := a_j$ ($j
    = 1 \upto n$). For $i = 1 \upto l$ repeat
    steps~\ref{4aUnipotent2}--\ref{4aUnipotent4}.
  \item \label{4aUnipotent2} If none of the $\Phi(b_j)$
    involves~$z_i$, skip steps~\ref{4aUnipotent3}--\ref{4aUnipotent4}
    and proceed with the next~$i$.
  \item \label{4aUnipotent3} Apply \aref{4aSlice} to
    $\tilde{G} := \Spec\bigl(K[z_i \upto z_l,t_1^{\pm 1} \upto
    z_m^{\pm 1}]\bigr)$
    and
    $\tilde{R} := K[c^{-1},b_1 \upto b_n] \linebreak \subseteq R_c$,
    with~$\Phi$ extended to $R_c$. Let $\tilde{s}$ be the resulting
    local slice of degree~$d$ with denominator~$\tilde{c}$ and
    $\map{\tilde{\pi}}{\tilde{R}_{\tilde{c}}}{\tilde{R}_{\tilde{c}}^{\tilde{G}}}$
    the resulting homomorphism.
  \item \label{4aUnipotent4} Choose a semi-invariant $c' \in R$ such
    that $(R_c)_{\tilde{c}} = R_{c'}$. This can be done by
    choosing~$c'$ to be a numerator of~$\tilde{c}$ and then
    multiplying it by a high enough power of~$c$ such that it becomes
    a semi-invariant and an $R$-multiple of~$c$. Redefine~$c$ to
    be~$c'$,~$k$ to be $k + 1$,~$b_j$ to be $\tilde{\pi}(b_j)$ ($j = 1
    \upto n$), and set $s_k := \tilde{s}$.
  \end{enumerate}
\end{alg}

\begin{proof}[Proof of correctness of \aref{4aUnipotent}]
  By induction on~$i$ assume that at the beginning of
  step~\ref{4aUnipotent2} the elements $c$, $b_j$, and~$s_j$ are as
  claimed for the output of the algorithm, but with $U$ replaced by
  $G_{i-1}$. Also assume that $\Phi(b_j) \subseteq R_c^{G_{i-1}}[z_i
  \upto z_l,t_1^{\pm 1} \upto t_m^{\pm 1}]$. We will show that, after
  step~\ref{4aUnipotent4}, the same holds with~$i$ replaced by~$i+1$.

  If none of the $\Phi(b_j)$ involves $z_i$, then $R_c^{G_{i-1}} =
  K[c^{-1},b_1 \upto b_n] = R_c^{G_i}$, so step~\ref{4aUnipotent2} is
  correct. After step~\ref{4aUnipotent3}, the map
  $\tilde{R}^{G_i}_{\tilde{c}}[x_{k+1}] \to \tilde{R}_{\tilde{c}}$,
  $x_{k+1} \mapsto \tilde{s}$ is an isomorphism by
  \tref{1tSlice}\eqref{1tSliceA}, and~$\tilde{\pi}$ equals the
  composition $\tilde{R}_{\tilde{c}} \stackrel{\sim}{\longrightarrow}
  \tilde{R}^{G_i}_{\tilde{c}}[x_{k+1}] \xlongrightarrow{x_{k+1}
    \mapsto 0} \tilde{R}^{G_i}_{\tilde{c}}$ by
  \tref{1tSlice}\eqref{1tSliceB}. Since $\tilde{R}_{\tilde{c}} =
  R_{c'}^{G_{i-1}}$, the composition $R_{c'}^{G_i}[x_1 \upto x_{k+1}]
  \xlongrightarrow{x_{k+1} \mapsto s_{k+1}} R_{c'}^{G_{i-1}}[x_1 \upto
  x_k] \xlongrightarrow{x_j \mapsto s_j} R_{c'}$ is an
  isomorphism. The commutative diagram \vspace{-2mm}%
  \DIAGV{60}%
  {R_{c'}} \n{} \n{\!\!\!\!\!\!\!\!\!\!\!\!\!\!\!\!\Earv{\sim}{60}}
  \n{} \n{R_{c'}^{G_{i-1}}[x_1 \upto x_k]} \n{} \n{\Earv{\sim}{22}}
  \n{} \n{R_{c'}^{G_i}[x_1 \upto x_{k+1}]} \nn%
  {} \n{} \n{\eseaR{\sm{\pi}}} \n{} \n{\Sar{\sm{x_j \mapsto 0}}} \n{}
  \n{\ \eseaR{\sm{\id \otimes \tilde{\pi}}}} \n{} \n{\saR{\sm{x_{k+1}
        \mapsto 0}}} \nn%
  {} \n{} \n{} \n{} \n{\,\,\,\,\,R_{c'}^{G_{i-1}}} \n{} \n{} \n{}
  \n{R_{c'}^{G_i}[x_1 \upto x_k]} \nn%
  {} \n{} \n{} \n{} \n{} \n{} \n{\eseaR{\sm{\tilde{\pi}}}} \n{}
  \n{\saR{\sm{x_j \mapsto 0}}} \nn%
  {} \n{} \n{} \n{} \n{} \n{} \n{} \n{} \n{R_{c'}^{G_i}}%
  \diag%
  shows that the ``new'' map~$\pi$ satisfies what is claimed for the
  output of the algorithm. It remains to show that the ``new'' $b_j$
  satisfy $\Phi(b_j) \in R_{c'}^{G_i}[z_{i+1} \upto z_l,t_1^{\pm 1}
  \upto t_m^{\pm 1}]$. The $b_j$ lie in $R_{c'}^{G_i}$, so it suffices
  to prove the statement for any $a \in R_{c'}^{G_i}$. Since
  $R_{c'}^{G_i} \subseteq R_{c'}^{G_{i-1}}$ we have, by induction,
  $\Phi(a) \in R_{c'}^{G_{i-1}}[z_i \upto z_l,t_1^{\pm 1} \upto
  t_m^{\pm 1}]$. We apply \lref{3lPhi} to the action of $G/G_{i-1}$ on
  $R_{c'}^{G_{i-1}}$, with normal subgroup $G_i/G_{i-1} \cong
  \Ga$. The lemma says that applying~$\psi$ to a $\Ga$-invariant
  yields another $\Ga$-invariant, with~$\psi$ formed by
  applying~$\Phi$ followed by sending~$z_i$ to~$0$. So for elements of
  $R_{c'}^{G_i}$, applying~$\psi$ is the same as applying~$\Phi$. So
  indeed $\Phi(a) \in R_{c'}^{G_i}[z_{i+1} \upto z_l,t_1^{\pm 1} \upto
  t_m^{\pm 1}]$.
\end{proof}

\section{Multiplicative group and torus actions} \label{5sTorus}

This section first deals with actions of the multiplicative group. We
define a notion of a local slice and show that its behavior parallels
that of a local slice for an additive group action. Together with the
results from the previous section, this leads to an algorithm for
solvable group actions.

With~$t$ an indeterminate, let $\Gm := \Spec(K[t^{\pm 1}])$ be the
multiplicative group over a ring $K$, acting morphically on an affine
$K$-scheme $\Spec(R)$. The action is given by a homomorphism
$\map{\phi}{R}{R[t^{\pm 1}]}$ of $K$-algebras. If $c \in R$ is a
semi-invariant of weight $\chi = t^k$ (with~$k$ an integer) then the
map
$\mapl{\phi_c}{R_c}{R_c[t^{\pm 1}]}{a/c^e}{\chi^{-e} \phi(a)/c^e}$,
which will also be written as~$\phi$, defines a $\Gm$-action on
$\Spec(R_c)$. For $a \in R$ we write
\[
\deg(a) := \max\{|k| \mid t^k \ \text{occurs in} \ \phi(a)\},
\]
with $\deg(0) := 0$. So the invariant ring $R^\Gm$ consists of the
elements of degree~$0$. We now define the notion of a local slice for
the multiplicative group, which plays a very similar role as a local
slice for the additive group.

\begin{defi} \label{5dSlice}%
  In the above situation, let $0 \ne c \in R$ be a semi-invariant. An
  element $s \in R_c$ is called a \df{local slice} of degree $d > 0$
  with denominator~$c$ if
  \begin{enumerate}
    \renewcommand{\theenumi}{\roman{enumi}}
  \item \label{5dSlice1} $s$ is a semi-invariant of weight $t^{-d}$,
  \item \label{5dSlice2} $s$ is invertible (in $R_c$), and
  \item \label{5dSlice3} every $a \in R_c$ with $\deg(a) < d$ lies in
    $R_c^\Gm$.
  \end{enumerate}
\end{defi}

\begin{prop} \label{5pSlice}%
  In the above situation, if the action is nontrivial, a local slice
  exists.
\end{prop}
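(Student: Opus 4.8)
The plan is to recast the $\Gm$-action as a $\ZZ$-grading and then imitate the $\Ga$-picture at the level of weights. Indeed, giving a $\Gm$-action on $\Spec(R)$ is the same as giving a $\ZZ$-grading $R=\bigoplus_{k\in\ZZ}R_k$, where $R_k=\{a\in R\mid\phi(a)=t^k a\}$ is the space of semi-invariants of weight $t^k$; this is precisely the weight decomposition provided by \lref{4lTorus} in the case $m=1$, together with the counit relation $\phi(a)|_{t=1}=a$. Under this dictionary $R^\Gm=R_0$, the semi-invariants are the homogeneous elements, and $\deg(a)$ is the largest $|k|$ for which $a$ has a nonzero component in degree~$k$. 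Since the action is nontrivial, the support $W:=\{k\mid R_k\neq 0\}$ contains a nonzero element; let $d_0>0$ be the positive generator of the subgroup $\langle W\rangle\subseteq\ZZ$.

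First I would dispose of condition~\eqref{5dSlice3} for free. For any nonzero homogeneous $c$ of weight $w$ (so $w\in W$ and hence $d_0\mid w$) the grading passes to $R_c$, and a nonzero element of $(R_c)_k$ has the shape $a/c^e$ with $0\neq a\in R_{k+ew}$; then $k+ew\in W\subseteq d_0\ZZ$ forces $d_0\mid k$. Thus the grading of $R_c$ is supported on $d_0\ZZ$, so writing $d$ for the positive generator of the subgroup generated by the support of $R_c$ (a multiple of $d_0$), every $a\in R_c$ with $\deg(a)<d$ lies in $(R_c)_0=R_c^\Gm$; that is, \eqref{5dSlice3} holds automatically. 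Hence the problem reduces to producing a single nonzero semi-invariant $c$ together with a homogeneous unit of $R_c$ of weight $t^{-d}$: such a unit is then a local slice of degree~$d$, since \eqref{5dSlice1} and \eqref{5dSlice2} hold by construction. The degenerate case costs nothing: if some nonzero homogeneous element $c$ happens to be nilpotent, then $R_c=0$ and $s=0$ is a local slice, just as in the second case of \exref{1exSlice}.

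It remains to manufacture the unit, and this is the only real difficulty. If $R$ were a domain I would pick nonzero homogeneous $c_1\upto c_r$ whose weights $w_1\upto w_r$ generate $d_0\ZZ$, set $c:=c_1\cdots c_r$, note that each $c_i$ becomes invertible in $R_c$, and take $s:=\prod_i c_i^{-n_i}$ where $\sum_i n_i w_i=d_0$; this $s$ is a homogeneous unit of weight $t^{-d_0}$. The obstacle in the general, non-integral setting is that the product $c_1\cdots c_r$ may vanish, so one cannot invert a whole generating set of weights at once. I would get around this by an iterated localization, keeping track of the subgroup $\Lambda_c\subseteq\ZZ$ of weights carrying a homogeneous unit of $R_c$ (a subgroup, since homogeneous units are closed under products and inverses). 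Starting from $c=1$: whenever $\Lambda_c$ is strictly smaller than the subgroup generated by the support of $R_c$, there is a weight in that support missing from $\Lambda_c$, and localizing $R_c$ further at a nonzero homogeneous element of that weight either lands in the zero ring (handled above) or leaves $R_c$ nonzero while strictly enlarging $\Lambda_c$. By the ascending chain condition on subgroups of $\ZZ$ this process terminates, and at termination $\Lambda_c$ equals the subgroup generated by the support of $R_c$; in particular $R_c$ contains a homogeneous unit of weight $t^{-d}$ with $d$ as above, which is the desired local slice. The crux of the whole argument — and the step I expect to be most delicate — is exactly this zero-divisor bookkeeping, which simply disappears when $R$ is a domain, as in the settings treated in the cited literature.
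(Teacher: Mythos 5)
Your proposal is correct, and it reaches the conclusion by a genuinely different mechanism than the paper's own proof. The paper makes the same opening reduction (a nonzero nilpotent semi-invariant gives $R_c = \{0\}$, where $0 = 1$ serves as a local slice) and then simply chooses a semi-invariant $c \in R$ of positive degree and a semi-invariant $s \in R_c$ of \emph{minimal} positive degree~$d$, makes~$s$ invertible by localizing at a semi-invariant numerator (the same maneuver you use, cross-referenced to step~4 of \aref{4aUnipotent}), normalizes the weight to $t^{-d}$, and deduces condition~\eqref{5dSlice3} of \dref{5dSlice} from \lref{4lTorus} together with minimality --- where ``minimal'' must be read as minimal over all admissible choices of the pair $(c,s)$, since a further localization can create semi-invariants of smaller positive degree (exactly the gcd phenomenon your example of weights $2$ and $3$ exhibits). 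Your argument replaces this compressed minimal-choice by explicit structure: the grading dictionary, the observation that \eqref{5dSlice3} is automatic once~$d$ generates the subgroup spanned by the support of $R_c$, and the ascending-chain iteration that enlarges the subgroup $\Lambda_c$ of unit weights until it equals that support subgroup. What this buys is precisely the zero-divisor bookkeeping you flag as the crux: it makes transparent why the relevant~$d$ is a gcd of \emph{surviving} weights and why products vanishing cannot break the argument; moreover, your iteration is in substance the same Euclidean mechanism ($e_1 = qd + r$, $\hat{s} := s^q b$, $d := |r|$) on which the paper's \aref{5aSlice} runs, so your proof doubles as a correctness argument for that algorithm, whereas the paper keeps the short nonconstructive proof of \pref{5pSlice} separate from the algorithmic version. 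One step you leave tacit should be said once: each further localization $(R_c)_b$ has to be rewritten as $R_{c'}$ for a single nonzero homogeneous $c' \in R$ (take a homogeneous numerator~$a$ of~$b$ and set $c' := c a$, nonzero because the image of~$a$ in $R_c$ is nonzero), both so that the final denominator is a semi-invariant \emph{in $R$} as \dref{5dSlice} demands, and so that your zero-ring branch really is ``handled above''; this is routine and is exactly the paper's numerator trick, but your write-up uses it silently whenever you keep calling the updated ring $R_c$.
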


\begin{proof}
  If there exists a nonzero nilpotent semi-invariant $c \in R$, then
  $0 = 1 \in R_c$ is a local slice of weight, say, $t$. So we may
  assume that no nonzero semi-invariant is nilpotent. Choose a
  semi-invariant $c \in R$ of positive degree and a semi-invariant
  $s \in R_c$ of {\em minimal} positive degree~$d$. We can choose a
  numerator $s' \in R$ of~$s$ that is a semi-invariant (see
  step~\ref{4aUnipotent4} in \aref{4aUnipotent}). Replacing~$c$ by
  $c s'$ we may assume that~$s$ is invertible, and replacing, if
  necessary,~$s$ by $s^{-1}$, we may assume that~$s$ has weight
  $t^{-d}$. Let $a \in R_c$ be an element of degree $< d$. By
  \lref{4lTorus}, the coefficient of a monomial $t^k$ in $\phi(a)$
  occurring is a semi-invariant of weight $t^k$. Since
  $|k| \le \deg(a) < d$ the minimality of~$d$ implies $k = 0$, so
  $a \in R_c^\Gm$.
\end{proof}


We will present an algorithm for producing a local slice, which
actually does more: It produces a local slice that is a semi-invariant
with respect to a torus containing the multiplicative group. Since we
are still dealing with a single copy of $\Gm$, we postpone presenting
the algorithm and first prove a theorem which shows the usefulness of
local slices and which parallels \tref{1tSlice} and so implies that
the morphism $\Spec(R_c) \to \Spec\bigl((R_c)^\Gm\bigr)$ is an
excellent quotient with fibers $\Spec(K[x^{\pm 1}])$.

\begin{theorem} \label{5tSlice}%
  For a nontrivial action of the multiplicative group $\Gm$ over a
  ring $K$ on an affine $K$-scheme $\Spec(R)$, given by a homomorphism
  $\map{\phi}{R}{R[t^{\pm 1}]}$, let~$s$ be a local slice of
  degree~$d$ with denominator~$c$.
  \begin{enumerate}
  \item \label{5tSliceA} The homomorphism
    $(R_c)^\Gm[y^{\pm 1}] \to R_c$, sending the indeterminate~$y$
    to~$s$, is an isomorphism. We write
    $\map{\psi}{R_c}{(R_c)^\Gm[y^{\pm 1}]}$ for the inverse
    isomorphism.
  \item \label{5tSliceB} The composition
    \[
    \pi\mbox{:}\ R_c \xlongrightarrow{\psi} (R_c)^\Gm[y^{\pm 1}]
    \xlongrightarrow{y \mapsto 1} (R_c)^\Gm
    \]
    is a homomorphism of $(R_c)^\Gm$-algebras with $\ker(\pi) = (s -
    1)$. In particular,~$\pi$ is surjective. For $a \in R_c$, $\pi(a)$
    is given by substituting $t = \sqrt[d]{s}$ in $\phi(a)$, which
    makes sense because $\phi(a) \in R[t^{\pm d}]$.
  \item \label{5tSliceC} The composition
    \[
    R_c \stackrel{\phi}{\longrightarrow} R_c[t^{\pm 1}]
    \stackrel{\pi}{\longrightarrow} (R_c)^\Gm[t^{\pm 1}]
    \]
    (with~$\pi$ applied coefficient-wise) is injective and makes
    $(R_c)^\Gm[t^{\pm 1}]$ into an $R_c$-module that is generated
    by~$d$ elements.
  \item \label{5tSliceD} Let $B$ be a ring with a homomorphism
    $(R_c)^\Gm \to B$. Then
    \[
    (B \otimes_{(R_c)^\Gm} R)^\Gm = B \otimes 1.
    \]
  \end{enumerate}
\end{theorem}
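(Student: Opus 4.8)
The plan is to follow the proof of \tref{1tSlice} step by step, replacing division with remainder (\pref{1pDWR}) by the bookkeeping of $t$-weights supplied by \lref{4lTorus}. Everything reduces to \eqref{5tSliceA}, so I would prove that first. For injectivity of the map $(R_c)^\Gm[y^{\pm 1}] \to R_c$, $y \mapsto s$, suppose $f = \sum_i a_i y^i$ with $a_i \in (R_c)^\Gm$ satisfies $f(s) = 0$. Applying the ring homomorphism $\phi$ and using that it fixes the invariants $a_i$ while $\phi(s) = t^{-d} s$, I get $\sum_i a_i s^i t^{-di} = 0$ in $R_c[t^{\pm 1}]$. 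Since $d > 0$, the powers $t^{-di}$ are pairwise distinct, so comparing coefficients gives $a_i s^i = 0$, and invertibility of $s$ forces $a_i = 0$; hence $f = 0$.

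For surjectivity, take $a \in R_c$ and write $\phi(a) = \sum_k b_k t^k$. By \lref{4lTorus} each $b_k$ is a semi-invariant of weight $t^k$. The key claim is that $b_k \ne 0$ forces $d \mid k$: writing $k = dq + r$ with $0 \le r < d$, the element $b_k s^q$ is a semi-invariant of weight $t^r$ which is nonzero (as $s$ is invertible) and of degree $r < d$, so by \dref{5dSlice}\eqref{5dSlice3} it lies in $(R_c)^\Gm$, which is impossible for a nonzero semi-invariant unless $r = 0$. Thus $\phi(a) = \sum_m b_{dm} t^{dm}$, and $a_m := b_{dm} s^m$ lies in $(R_c)^\Gm$. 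Evaluating $\phi(a)$ at $t = 1$ recovers $a = \sum_m b_{dm} = \sum_m a_m s^{-m}$, exhibiting $a$ as a Laurent polynomial in $s$ with invariant coefficients.

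The remaining parts are then formal. Part \eqref{5tSliceB} follows because $\pi$ is a composition of $(R_c)^\Gm$-algebra homomorphisms and $\psi$ is an isomorphism, so $\ker(\pi) = \psi^{-1}\bigl((y - 1)\bigr) = (s - 1)$; the explicit formula drops out of the identity $\phi(a) = \sum_m a_m s^{-m} t^{dm} \in R_c[t^{\pm d}]$, since substituting $t^d = s$ yields $\sum_m a_m = \pi(a)$. For \eqref{5tSliceC}, the same expansion together with $\pi(s) = 1$ gives that the composition sends $a$ to $\sum_m a_m t^{dm}$, so it is injective with image $(R_c)^\Gm[t^{\pm d}]$; the target $(R_c)^\Gm[t^{\pm 1}]$ is free over this image on $1, t, \ldots, t^{d-1}$, providing $d$ module generators. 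Finally \eqref{5tSliceD} copies \tref{1tSlice}\eqref{1tSliceD}: by \eqref{5tSliceA} the ring $R' := B \otimes_{(R_c)^\Gm} R_c$ equals $B[(1 \otimes s)^{\pm 1}]$ with $1 \otimes s$ an invertible indeterminate, and for $a = \sum_i b_i (1 \otimes s)^i$ the tensored action gives $\phi'(a) = \sum_i b_i (1 \otimes s)^i t^{-di}$; invariance forces $b_i = 0$ for $i \ne 0$, whence $(R')^\Gm = B \otimes 1$ (the reverse inclusion being clear).

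I expect the divisibility claim in the surjectivity half of \eqref{5tSliceA} to be the only real obstacle: it is exactly where invertibility of $s$ and the degree condition \dref{5dSlice}\eqref{5dSlice3} combine, and it is the multiplicative counterpart of the division-with-remainder step. Everything downstream is careful coefficient comparison among the pairwise distinct monomials $t^{-di}$ and $(1 \otimes s)^i t^{-di}$, which \lref{4lTorus} makes routine.
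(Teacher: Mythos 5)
Your proposal is correct and follows essentially the same route as the paper: injectivity of $(R_c)^\Gm[y^{\pm 1}] \to R_c$ via applying $\phi$ and invertibility of~$s$, surjectivity via \lref{4lTorus} together with division with remainder on the weight exponent (your ``$d \mid k$'' claim is exactly the paper's ``$r = 0$'' step, obtained from the same observation that a nonzero semi-invariant of weight $t^r$ with $0 < r < d$ would be an invariant by \dref{5dSlice}\eqref{5dSlice3}, which is impossible), and then \eqref{5tSliceB}--\eqref{5tSliceD} derived formally from \eqref{5tSliceA} just as in the paper, with your computation $\pi(\phi(a)) = \sum_m a_m t^{dm}$ agreeing with the paper's $f(t^{-d})$. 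Your silent replacement of $R$ by $R_c$ in \eqref{5tSliceD} matches what the paper's own proof implicitly does when it writes $B \otimes_{(R_c)^\Gm} R = B[(1 \otimes s)^{\pm 1}]$.
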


\begin{proof}
  \begin{enumerate}
  \item[\eqref{5tSliceA}] Let $f \in (R_c)^\Gm[y^{\pm 1}]$ be a
    Laurent polynomial with $f(s) = 0$. Then
    \[
    0 = \phi\bigl(f(s)\bigr) = f\bigl(\phi(s)\bigr) = f(s t^{-d}).
    \]
    Since~$s$ is invertible, this implies $f = 0$. This proves
    injectivity. For surjectivity, let $a \in R_c$ by a semi-invariant
    of weight $t^k$ with $k \in \ZZ$. Obtain $k = q d + r$ with $q,r
    \in \ZZ$, $0 \le r < d$ by division with remainder. It follows
    that $s^q a$ is a semi-invariant of degree~$r$ and therefore an
    invariant, so $a = s^q a \cdot s^{-q} \in (R_c)^\Gm[s^{\pm
      1}]$. We also obtain $r = 0$, so~$k$ is divisible by~$d$. For $a
    \in R_c$ arbitrary write $\phi(a) = \sum_k a_k t^k$. By
    \lref{4lTorus}, every $a_k$ is a semi-invariant and therefore lies
    in $(R_c)^\Gm[s^{\pm 1}]$, so the same is true for $a = \sum_k
    a_k$. This shows surjectivity.
  \item[\eqref{5tSliceB}] The first claim is clear. Regarding the
    second claim, we have shown above that $\phi(a) \in R[t^{\pm d}]$
    for $a \in R_c$. For showing that the map that is claimed to be
    equal to~$\pi$ really is~$\pi$, it suffices to check this for~$s$,
    which is straightforward.
  \item[\eqref{5tSliceC}] Let $a \in R_c$. By~\eqref{5tSliceA} we have
    $a = f(s)$ with $f \in (R_c)^\Gm[y^{\pm 1}]$. So
    \[
    \pi\bigl(\phi(a)\bigr) = \pi\bigl(f(\phi(s))\bigr) = \pi\bigl(f(s
    t^{-d})\bigr) = f\bigl(\pi(s) t^{-d}\bigr) = f(t^{-d}),
    \]
    from which~\eqref{5tSliceC} follows.
  \item[\eqref{5tSliceD}] By~\eqref{5tSliceA}, we have $R' := B
    \otimes_{(R_c)^\Gm} R = B[(1 \otimes s)^{\pm 1}]$. By definition,
    $(R')^\Gm = \ker(\phi' - \id)$ with $\map{\phi'}{R'}{R'[t^{\pm
        1}]}$ obtained by tensoring~$\phi$. Let $a \in R'$ and write
    $a = \sum_{i \in \ZZ} b_i (1 \otimes s)^i$ with $b_i \in B$. Then
    \[
    \phi'(a) - a = \sum_i b_i \bigl((1 \otimes t^{-d} s)^i - (1
    \otimes s)^i\bigr) = \sum_i b_i (1 \otimes s)^i (t^{-i d} - 1),
    \]
    which is zero if and only if $b_i = 0$ for $i \ne 0$, i.e., $a \in
    B$. \qed
  \end{enumerate}
  \renewcommand{\qed}{}
\end{proof}

Now we come to the announced algorithm for finding a local slice that
is a semi-invariant with respect to an ambient torus.

\begin{alg}[A local slice for the multiplicative group with a
  semi-invariant denominator] \label{5aSlice} \mbox{}%
  \begin{description}
  \item[Input] A torus $T = \Spec\bigl(K[t_1^{\pm 1} \upto t_m^{\pm
      1}]\bigr)$ over a ring $K$ acting on an affine scheme $\Spec(R)$
    with $R = K[a_1 \upto a_n]$, with the action given by
    $\map{\Phi}{R}{R[t_1^{\pm 1} \upto t_m^{\pm 1}]}$. Assume that the
    subgroup $T_1 \cong \Gm$ given by the ideal $(t_2 - 1 \upto t_m -
    1)$ acts nontrivially.
  \item[Output] A local slice $s \in R_c$ with denominator~$c$ for the
    action of $T_1$, such that~$s$ and~$c$ are semi-invariants of the
    group $T$.
  \end{description}
  \begin{enumerate}
    \renewcommand{\theenumi}{\arabic{enumi}}
  \item \label{5aSlice1} Set $s := 1$, $c := 1$, and $d := 0$.
  \item \label{5aSlice2} While not all $\Phi(a_i)$ (viewed as elements
    of $R_c[t_1^{\pm 1} \upto t_m^{\pm 1}]$) lie in $R_c[t_1^{\pm
      d},t_2^{\pm 1} \upto t_m^{\pm 1}]$, repeat
    steps~\ref{5aSlice3}--\ref{5aSlice5}.
  \item \label{5aSlice3} Choose a monomial $t = t_1^{e_1} \cdots
    t_m^{e_m}$ occurring in a $\Phi(a_i)$ with~$e_1$ not a multiple
    of~$d$, and let $b \in R_c$ be the coefficient of~$t$ in
    $\Phi(a_i)$.
  \item \label{5aSlice4} If $d \ne 0$, use division with remainder to
    obtain $e_1 = q d + r$ with $q,r \in \ZZ$, $0 < |r| \le d/2$. If
    $d = 0$, set $r := e_1$ and $q := 0$. In both cases, set
    $\hat{s} := s^q b$.
  \item \label{5aSlice5} If~$\hat{s}$ is not invertible in $R_c$,
    choose a numerator $s' \in R$ of~$\hat{s}$ that is a
    semi-invariant (see step~\ref{4aUnipotent4} in
    \aref{4aUnipotent}), and redefine~$c$ to be $s' c$. Redefine~$s$
    to be $\hat{s} \in R_c$ if $r < 0$ and $\hat{s}^{-1} \in R_c$ if
    $r > 0$. Finally, set $d := |r|$.
  \end{enumerate}
\end{alg}

\begin{rem} \label{5rSlice}%
  What was said in \rref{4rSlice} also applies to \aref{5aSlice}.
\end{rem}

\begin{proof}[Proof of correctness of \aref{5aSlice}]
  Since $T_1$ acts nontrivially, $d$ becomes positive after the first
  passage through steps~\ref{5aSlice3}--\ref{5aSlice5}, and then
  strictly decreases with each subsequent passage. This guarantees
  termination. By \lref{4lTorus},~$b$ from step~\ref{5aSlice3} is a
  semi-invariant of weight~$t$. It follows that after
  step~\ref{5aSlice5},~$s$ is a semi-invariant with weight
  $t_1^{-d} \cdot$(a power product of $t_2 \upto t_m$). Moreover, in
  step~\ref{5aSlice5}, $s' c$ is nonzero since $b \in R_c$ is nonzero
  and therefore also $\hat{s}$, since~$s$ is invertible. Clearly the
  ``new''~$s$ is invertible. (But although the ``new''~$c$ is nonzero,
  it may happen that $R_c$ becomes the zero-ring.) It remains to show
  that~\eqref{5dSlice3} from \dref{5dSlice} holds when the algorithm
  terminates. An element $a \in R_c$ can be written as
  $a = f(a_1 \upto a_n)/c^k$ with~$f$ a polynomial over $K$. Since
  also~$c$ can be written as a polynomial in the $a_i$, the
  termination condition implies
  $\Phi(a) \in R_c[t_1^{\pm d},t_2^{\pm 1} \upto t_m^{\pm 1}]$. So
  if~$a$ has degree $< d$ (with respect to the $T_1$-action), then
  $a \in R_c^{T_1}$, and the proof is complete.
\end{proof}
  
We can now apply \aref{5aSlice} iteratively to the multiplicative
groups in a torus and combine it with \aref{4aUnipotent}. Thus we
obtain an algorithm for computing $R_c^G$ with $G$ a group scheme in
standard solvable form. As \aref{4aUnipotent}, the algorithm requires
that addition and multiplication of elements of $R$ are possible, and
that for every $c \in R$, zero testing in $R_c$ is possible.

\begin{alg}[Solvable group invariants] \label{5aSolvable} \mbox{}%
  \begin{description}
  \item[\bf Input] A group scheme $G$ in standard solvable form (see
    \dref{4dSolvable}, whose notation we adopt), acting on an affine
    scheme $\Spec(R)$ with $R = K[a_1 \upto a_n]$ a finitely generated
    algebra, with the action given by a homomorphism
    $\map{\Phi}{R}{R[z_1 \upto z_l,t_1^{\pm 1}\upto z_m^{\pm
        1}]}$. Assume that the characters $\chi_i \in K[t_1^{\pm
      1}\upto z_m^{\pm 1}]$ as in \dref{4dSolvable} are given.
  \item[Output]
    \begin{itemize}
    \item A semi-invariant $c \in R$, nonzero if $R \ne \{0\}$.
    \item A homomorphism $\map{\pi}{R_c}{(R_c)^G}$ of
      $(R_c)^G$-algebras given by the $b_i := \pi(a_i)$ and by $b :=
      \pi(c)$. So
      \[
      (R_c)^G = K[b^{-1},b_1 \upto b_n].
      \]
    \item Elements $u_1 \upto u_k,s_1 \upto s_r \in R_c$ such that the
      map
      \[
      (R_c)^G[x_1 \upto x_k,y_1^{\pm 1} \upto y_r^{\pm 1}]
      \xlongrightarrow [y_j \mapsto s_j]{x_i \mapsto u_i} R_c
      \]
      (with~$x_i$, $y_j$ indeterminates) is an isomorphism, and~$\pi$
      is equal to the composition
      $R_c \stackrel{\sim}{\longrightarrow} (R_c)^G[x_1 \upto
      x_k,y_1^{\pm 1} \upto y_r^{\pm 1}] \xlongrightarrow[y_j \mapsto
      1]{x_i \mapsto 0} (R_c)^G$. So
      \[
      \ker(\pi) = (u_1 \upto u_k,s_1 - 1 \upto s_r - 1).
      \]
    \end{itemize}
  \end{description}
  \begin{enumerate}
    \renewcommand{\theenumi}{\arabic{enumi}}
  \item \label{5aSolvable1} Apply \aref{4aUnipotent}. Let $c \in R$ be
    the resulting semi-invariant,~$b_i$ the image of the~$a_i$ under
    the map $\map{\pi}{R_c}{R_c^U}$, and rename the elements $s_1
    \upto s_k$ from \aref{4aUnipotent} to $u_1 \upto u_k$.  Set $b :=
    c$ and $r := 0$. For $j = 1 \upto m$ repeat
    steps~\ref{5aSolvable2}--\ref{5aSolvable4}.
  \item \label{5aSolvable2} If none of the $\Phi(b_i)$
    involves~$t_j$, skip steps~\ref{5aSolvable3}--\ref{5aSolvable4}
    and proceed with the next~$j$.
  \item \label{5aSolvable3} Apply \aref{5aSlice} to $T :=
    \Spec\bigl(K[t_j^{\pm 1} \upto z_m^{\pm 1}]\bigr)$ acting on
    $\tilde{R} := K[b^{-1},b_1 \upto b_n] \subseteq R_c$ by~$\Phi$,
    extended to $R_c$. Let $\tilde{s}$ be the resulting local slice of
    degree~$d$ with denominator~$\tilde{c}$.
  \item \label{5aSolvable4} Choose a semi-invariant $c' \in R$ such
    that $(R_c)_{\tilde{c}} = R_{c'}$ (see step~\ref{4aUnipotent4} in
    \aref{4aUnipotent}). For $i = 1 \upto n$, obtain~$b'_i$ by
    substituting $t_j = \sqrt[d]{\tilde{s}}$ and $t_{j+1} = \cdots =
    t_m = 1$ in $\Phi(b_i)$, which lies in $R_{c'}[t_j^{\pm
      d},t_{j+1}^{\pm 1} \upto t_m^{\pm 1}]$. If $c' = c^e \tilde{c}
    \in R_{c'}$ (such an~$e$ exists by the choice of~$c'$),
    obtain~$b'$ by doing the same with $b^e \tilde{c}$ instead
    of~$b_i$. Set $c := c'$, $b_i := b'_i$, $b := b'$, $s_{r+1} :=
    \tilde{s}$, and $r := r + 1$.
  \end{enumerate}
\end{alg}

\begin{proof}[Proof of correctness of \aref{5aSolvable}]
  By induction on~$j$ assume that at the beginning of
  step~\ref{5aSolvable2} the elements $c$, $b_i$, $b$, $u_i$,
  and~$s_i$ are as claimed for the output of the algorithm, but with
  $G$ replaced by the subgroup $G'_{j-1}$ given by the ideal $(t_j - 1
  \upto t_m - 1)$. In particular, $\tilde{R} = (R_c)^{G'_{j-1}}$. Also
  assume that $b \in R_c$ is invertible. By step~\ref{5aSolvable1},
  this is true for $j = 1$. We will show that after
  step~\ref{5aSolvable4}, the same holds with~$j$ replaced by~$j+1$.

  If none of the $\Phi(b_i)$ involves $t_j$, then $(R_c)^{G'_{j-1}} =
  \tilde{R} = (R_c)^{G'_j}$, so step~\ref{5aSolvable2} is
  correct. In step~\ref{5aSolvable3}, \aref{5aSlice} can be applied
  since the restriction of $\Phi$ to $\tilde{R}$ defines an action of
  $T$ on $\Spec(\tilde{R})$. After step~\ref{5aSolvable3}, the map
  $(\tilde{R}_{\tilde{c}})^{G'_j}[y_{r+1}^{\pm 1}] \to
  \tilde{R}_{\tilde{c}}$, $y_{r+1} \mapsto \tilde{s}$ is an
  isomorphism by \tref{5tSlice}\eqref{5tSliceA}. We have $\tilde{c}
  \in \tilde{R} = (R_c)^{G'_{j-1}}$, so
  \[
  \tilde{R}_{\tilde{c}} = \bigl((R_c)^{G'_{j-1}}\bigr)_{\tilde{c}} =
  \bigl((R_c)_{\tilde{c}}\bigr)^{G'_{j-1}} = (R_{c'})^{G'_{j-1}}
  \]
  with~$c'$ from step~\ref{5aSolvable4}. So the above isomorphism is a
  map $(R_{c'})^{G'_j}[y_{r+1}^{\pm 1}]
  \stackrel{\sim}{\longrightarrow} (R_{c'})^{G'_{j-1}}$. It follows that
  also the composition
  \[
  (R_{c'})^{G'_j}[x_1 \upto x_k,y_1^{\pm 1} \upto y_{r+1}^{\pm 1}]
  \xlongrightarrow{y_{r+1} \mapsto \tilde{s} = s_{r+1}}
  (R_{c'})^{G'_{j-1}}[x_1 \upto x_k,y_1^{\pm 1} \upto y_r^{\pm 1}]
  \xlongrightarrow[y_i \mapsto s_i]{x_i \mapsto u_i} R_{c'}
  \]
  is an isomorphism.

  Moreover, by \tref{5tSlice}\eqref{5tSliceB}, the composition
  \[
  \tilde{\pi}\mbox{:}\ (R_{c'})^{G'_{j-1}}
  \stackrel{\sim}{\longrightarrow} (R_{c'})^{G'_j}[y_{r+1}^{\pm
    1}] \xlongrightarrow{y_{r+1} \mapsto 1} (R_{c'})^{G'_j}
  \]
  is given by applying~$\Phi$ and then substituting $t_j =
  \sqrt[d]{\tilde{s}}$ and $t_{j+1} = \cdots = t_m = 1$. So in
  step~\ref{5aSolvable4}, $b'_i = \tilde{\pi}(b_i) =
  \tilde{\pi}\bigl(\pi(a_i)\bigr)$ and $b' = \tilde{\pi}(b^e
  \tilde{c}) = \tilde{\pi}\bigl(\pi(c)^e \pi(\tilde{c})\bigr) =
  \tilde{\pi}\bigl(\pi(c')\bigr)$, where the second equality holds
  since $\tilde{c} \in (R_{c'})^{G'_{j-1}}$. From this we also see
  that $b'$ is invertible in $R_{c'}$ since~$b$, $\tilde{c}$
  and~$\tilde{s}$ are invertible and they are semi-invariants, so
  applying~$\tilde{\pi}$ means multiplying by a power
  of~$\tilde{s}$. It remains to show that $\tilde{\pi} \circ \pi$ is
  equal to the composition $R_{c'} \stackrel{\sim}{\longrightarrow}
  (R_{c'})^{G'_j}[x_1 \upto x_k,y_1^{\pm 1} \upto y_{r+1}^{\pm 1}]
  \xlongrightarrow[y_i \mapsto 1]{x_i \mapsto 0} (R_{c'})^{G'_j}$.
  But this follows from the commutative diagram \vspace{-2mm}%
  \DIAGV{60}%
  {R_{c'}} \n{\Earv{\sim}{30}} \n{} \n{}
  \n{(R_{c'})^{G'_{j-1}}[\underline{x},y_1^{\pm 1} \upto y_r^{\pm 1}]}
  \n{} \n{} \n{\Earv{\sim}{22}} \n{} \n{}
  \n{(R_{c'})^{G'_j}[\underline{x},y_1^{\pm 1} \upto y_{r+1}^{\pm 1}]}
  \nn%
  {} \n{} \n{\eseaR{\sm{\pi}}} \n{} \n{\Sar{\sm{\begin{array}{r} x_i
          \mapsto 0 \\ y_i \mapsto 1 \end{array}}}} \n{}
  \n{\eseaR{\sm{\id \otimes \tilde{\pi}}}} \n{} \n{}
  \n{\saR{\sm{y_{r+1} \mapsto 1}}} \nn%
  {} \n{} \n{} \n{} \n{\,\,\,\,\,\,\,\,(R_{c'})^{G'_{j-1}}} \n{} \n{}
  \n{} \n{} \n{} \n{(R_{c'})^{G'_j}[\underline{x},y_1^{\pm 1} \upto
    y_r^{\pm 1}]} \nn%
  {} \n{} \n{} \n{} \n{} \n{} \n{\eseaR{\sm{\tilde{\pi}}}} \n{} \n{}
  \n{\saR{\sm{\begin{array}{r} x_i \mapsto 0 \\ y_i \mapsto
          1 \end{array}}}} \nn%
  {} \n{} \n{} \n{} \n{} \n{} \n{} \n{} \n{} \n{(R_{c'})^{G'_j}}%
  \diag%
  This completes the proof.
\end{proof}

\aref{5aSolvable} has been implemented in the computer algebra system
MAGMA~[\citenumber{magma}]. The implementation is limited to the case
that $R$ is a polynomial ring over a field of characteristic~$0$.

We finish this section by giving a summary of the results obtained
about quotients by solvable groups.

\begin{theorem} \label{5tMain}%
  Let $G$ be a group scheme over a ring $K$ in standard solvable form
  acting on a nonempty affine $K$-scheme $\Spec(R)$. (Recall that
  every connected solvable linear algebraic group over an
  algebraically closed field can be brought into standard solvable
  form.)
  \begin{enumerate}
  \item \label{5tMainA} There exists a nonzero semi-invariant
    $c \in R$ such that the morphism
    $X := \Spec(R_c) \to Y := \Spec\bigl((R_c)^G\bigr)$ is an
    excellent quotient by $G$ with fibers
    $F = \linebreak \Spec(K[x_1 \upto x_k,y_1^{\pm 1} \upto y_r^{\pm
      1}])$.
    If $G$ is unipotent, then $r = 0$. In particular, $X \to Y$ is a
    universal geometric quotient, and it is a faithfully flat
    morphism.
  \item \label{5tMainB} Assume $R_c \ne \{0\}$. The
    $(R_c)^G$-homomorphism $\map{\pi}{R_c}{(R_c)^G}$ induced by the
    cross section $Y \to X$ (see
    \rref{2rExcellent}\eqref{2rExcellentC}) has a kernel that is
    generated by~$k + r$ elements. Moreover,
    $\dim(R_c^G) = \dim(R_c) - (k + r)$. So if $K$ is a field and $R$
    or $R_c$ is a complete intersection, then also $(R_c)^G$ is a
    complete intersection.
  \item \label{5tMainC} If $R$ is finitely generated over $K$ and if
    it is possible to carry out the multiplication and addition of
    elements of $R$ and zero testing of elements of $R_{c'}$ for
    $c' \in R$, then \aref{5aSolvable} computes $(R_c)^G$ and a map
    $(R_c)^G[x_1 \upto x_k,y_1^{\pm 1} \upto y_r^{\pm 1}] \to R_c$
    defining the isomorphism
    $X \stackrel{\sim}{\longrightarrow} F \times Y$ that comes with
    the excellent quotient. The algorithm also computes the above
    homomorphism $\map{\pi}{R_c}{(R_c)^G}$ of $(R_c)^G$-algebras and
    its kernel. So $(R_c)^G$ requires at most as many generators as
    $R_c$. \aref{5aSolvable} does not require any Gr\"obner basis
    computations, unless they are necessary for the operations in $R$.
  \item \label{5tMainD} If $R$ is an integral domain, then $\Quot(R)^G
    = \Quot\bigl((R_c)^G\bigr)$. So \aref{5aSolvable} also computes
    the invariant field $K(X)^G = \Quot(R)^G$.
  \end{enumerate}
\end{theorem}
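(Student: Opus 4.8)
The plan for \eqref{5tMainA} is to iterate \tref{2tSubgroup} along the subgroup chain coming from the standard solvable form. Writing $1 = N_0 \subseteq N_1 \subseteq \cdots \subseteq N_p = G$ for the chain whose first $l$ steps are the $G_i$ of \dref{4dSolvable} (with factors $\Ga$) and whose last $m$ steps are the preimages in $G$ of a chain of subtori of $T$ (with factors $\Gm$), each $N_j$ is normal in $G$ — the torus-level pieces because $G/U$ is abelian — and each $N_j/N_{j-1}$ is $\Ga$ or $\Gm$. First I would run \aref{5aSolvable}, or invoke Remarks~\ref{4rSlice} and~\ref{5rSlice} when $R$ is not finitely generated, to produce a single nonzero semi-invariant $c$; its being a semi-invariant is exactly what guarantees that at each stage the factor group acts on the intermediate invariant ring. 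Inductively, given that $X = \Spec(R_c) \to Y_{j-1} = \Spec\bigl(R_c^{N_{j-1}}\bigr)$ is already an excellent quotient by $N_{j-1}$, the induced $N_j$-action on $Y_{j-1}$ factors through $N_j/N_{j-1}$, and \tref{1tSlice} with \pref{2pExcellent} (fiber $\Spec(K[x])$), or \tref{5tSlice} with \pref{2pExcellent} (fiber $\Spec(K[y^{\pm 1}])$), shows $Y_{j-1} \to Y_j$ is an excellent quotient by $N_j$. Then \tref{2tSubgroup} upgrades this to an excellent quotient $X \to Y_j$ by $N_j$ with product fibers. After $p$ steps this gives the excellent quotient by $G$ with fibers $F = \Spec\bigl(K[x_1 \upto x_k, y_1^{\pm 1} \upto y_r^{\pm 1}]\bigr)$, where $k$ and $r$ count the factors acting nontrivially; if $G$ is unipotent there are no $\Gm$-factors, so $r = 0$. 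The remaining assertions of \eqref{5tMainA} are then immediate from \tref{2tGeo}.

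For \eqref{5tMainB} I would first identify the map $\pi$ induced by the cross section with the retraction produced by \aref{5aSolvable}: the cross section corresponds to the $S$-valued point $x_i \mapsto 0$, $y_j \mapsto 1$ of $F$, so on rings it is $R_c \xlongrightarrow{\sim} (R_c)^G[x_1 \upto x_k, y_1^{\pm 1} \upto y_r^{\pm 1}] \to (R_c)^G$, whose kernel is $(u_1 \upto u_k, s_1 - 1 \upto s_r - 1)$; this exhibits $k+r$ generators. The dimension formula then follows from the isomorphism $R_c \cong (R_c)^G[x_1 \upto x_k, y_1^{\pm 1} \upto y_r^{\pm 1}]$, since adjoining $k$ polynomial and $r$ Laurent variables to a Noetherian ring raises the Krull dimension by $k+r$; here $(R_c)^G$ is Noetherian because it is a retract of the finitely generated $K$-algebra $R_c$ via $\pi$.

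The complete-intersection statement is where the real work lies. Localization preserves the (locally) complete intersection property, so if $R$ is a complete intersection then so is $R_c$, and it suffices to descend the property along $A := (R_c)^G \hookrightarrow R_c$. The morphism $\Spec(R_c) \to \Spec(A)$ is smooth — indeed $R_c \cong A[x_1 \upto x_k, y_1^{\pm 1} \upto y_r^{\pm 1}]$ is a localization of a polynomial ring over $A$ — hence faithfully flat with regular, and in particular complete intersection, fibers. The plan is then to invoke the theorem that for a flat local homomorphism of Noetherian local rings the source is a complete intersection if and only if the target and the closed fiber are; applying this at each prime of $A$ (lifting primes by faithful flatness), the complete intersection property of $R_c$ descends to $A = (R_c)^G$. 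I expect this descent along smooth fibers to be the main obstacle, as it is the only step that is not a formal consequence of the earlier sections.

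Finally, part \eqref{5tMainC} is a restatement of the already-established correctness of \aref{5aSolvable} together with \eqref{5tMainA} and \eqref{5tMainB}, the absence of Gr\"obner basis computations being visible by inspection of the algorithm. For \eqref{5tMainD}, assume $R$ is a domain; then $c \ne 0$ forces $R_c$ to be a domain with $\Quot(R_c) = \Quot(R)$, so $X = \Spec(R_c)$ is integral, $K(X) = \Quot(R)$, and the $G$-action extends the one on $R$. By \eqref{5tMainA} the quotient $X \to Y$ is a geometric quotient, so \lref{2lField} gives $K(X)^G = K(Y) = \Quot\bigl((R_c)^G\bigr)$, that is, $\Quot(R)^G = \Quot\bigl((R_c)^G\bigr)$. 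Since \aref{5aSolvable} computes $(R_c)^G = K[b^{-1}, b_1 \upto b_n]$, it computes generators of the invariant field as well.
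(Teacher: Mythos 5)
Your handling of part~\eqref{5tMainA}, of the kernel and dimension statements in~\eqref{5tMainB}, and of parts~\eqref{5tMainC} and~\eqref{5tMainD} matches the paper's proof: induction along the normal subgroup chain using Remarks~\ref{4rSlice} and~\ref{5rSlice}, Theorems~\ref{1tSlice} and~\ref{5tSlice} together with \pref{2pExcellent} for the $\Ga$- and $\Gm$-steps, \tref{2tSubgroup} to stack the quotients, \tref{2tGeo} for the remaining properties, and \lref{2lField} for the invariant field. Your identification of~$\pi$ with the retraction $x_i \mapsto 0$, $y_j \mapsto 1$ is also exactly the paper's.

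The genuine gap is in the complete intersection step, which is the one place you depart from the paper. Descent along the faithfully flat map $(R_c)^G \to R_c$ via the Avramov-type theorem for flat local homomorphisms yields only that $(R_c)^G$ is a \emph{local} complete intersection at every prime. But the assertion of \tref{5tMain}\eqref{5tMainB} is the global one -- that $(R_c)^G$ admits a presentation $K[X_1 \upto X_N]/(f_1 \upto f_t)$ with $t = N - \dim\bigl((R_c)^G\bigr)$ -- and a finitely generated $K$-algebra that is locally a complete intersection need not be a complete intersection in this sense (for instance, any smooth affine variety whose canonical bundle is nontrivial is lci but not CI, since a complete intersection has free canonical module). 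Your parenthetical ``localization preserves the (locally) complete intersection property'' in the passage from $R$ to $R_c$ signals the same mismatch; the paper instead writes $R_c \cong R[x]/(cx-1)$ and uses equidimensionality of complete intersections to get $\dim(R_c) = \dim(R)$, so that the presentation of $R_c$ gains one variable and one equation. Moreover, the heavy descent machinery is unnecessary, because you had already proved exactly what is needed: since $\pi$ is surjective with $\ker(\pi) = (u_1 \upto u_k, s_1 - 1 \upto s_r - 1)$ and $\dim\bigl((R_c)^G\bigr) = \dim(R_c) - (k+r)$, appending these $k+r$ elements to a complete intersection presentation of $R_c$ gives a presentation of $(R_c)^G$ by exactly height-many equations. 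This elementary counting is the paper's one-line conclusion that ``the statement on complete intersections is a consequence of the other statements'' of part~\eqref{5tMainB}.
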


\begin{proof}
  \begin{enumerate}
  \item[\eqref{5tMainA}] The existence of~$c$ follows by induction on
    $l + m$ (the number of factors of type $\Ga$ or $\Gm$ in $G$),
    using Remarks~\ref{4rSlice} and~\ref{5rSlice} for the existence of
    local slices with semi-invariant denominators,
    Theorems~\ref{1tSlice} and~\ref{5tSlice} to show that the quotient
    by the first normal subgroup of type $\Ga$ or $\Gm$ is excellent,
    and \tref{2tSubgroup} to set the induction in motion. The other
    properties of the quotient follow from \tref{2tGeo}.
  \item[\eqref{5tMainB}] By definition,~$\pi$ is the composition of
    the isomorphism
    \[
    R_c \xlongrightarrow{\sim} K[x_1 \upto x_k,t_1^{\pm 1} \upto
    t_r^{\pm 1}] \otimes (R_c)^G = (R_c)^G[x_1 \upto x_k,t_1^{\pm 1}
    \upto t_m^{\pm 1}]
    \]
    with the homomorphism $(R_c)^G[x_1 \upto x_k,t_1^{\pm 1} \upto
    t_r^{\pm 1}] \to (R_c)^G$ sending~$x_i$ to~$0$ and~$t_j$
    to~$1$. The latter map has kernel $(x_1 \upto x_k,t_1 - 1 \upto
    t_r - 1)$. It follows that also the kernel of~$\pi$ is generated
    by~$k + r$ elements. The statement on dimension follows from the
    above isomorphism. If $R$ is a complete intersection, then so is
    $R_c$ since $R_c \cong R[x]/(c x - 1)$ and $\dim(R_c) = \dim(R)$,
    the equality following from the fact that complete intersections
    are equidimensional. Now the statement on complete intersections
    is a consequence of the other statements from~\eqref{5tMainB}.
  \item[\eqref{5tMainC}] See the proof of correctness of
    \aref{5aSolvable}.
  \item[\eqref{5tMainD}] This follows from~\eqref{5tMainA} and
    \lref{2lField}. \qed
  \end{enumerate}
  \renewcommand{\qed}{}
\end{proof}

\exref{2exSO2} shows that the hypothesis that $G$ be in standard
solvable form cannot be dropped from \tref{5tMain}: If $K$ is not an
algebraically closed field, it does not suffice that $G$ is connected
and solvable.

\begin{rem} \label{5rQuasiaffine}%
  The existence of an excellent quotient extends to quasi-affine
  schemes. In fact, let $X$ be a Noetherian scheme and $U \subseteq X$
  an open subscheme. Then $U$ is isomorphic to a (schematically) dense
  open subscheme of $\tilde{X} := \Spec\bigl(\gs{U}\bigr)$ (see
  \mycite[Proposition~13.80]{Goertz:Wedhorn:2010}%
  ) and we may assume $U \subseteq \tilde{X}$. Moreover, let $G$ be a
  group scheme acting on $U$. By the definition of $\tilde{X}$, the
  action extends to it. Observe that $\tilde{X}$ need not be of finite
  type even if $X$ is, but that is not an obstacle to the validity of
  \tref{5tMain}\eqref{5tMainA}. So if $G$ is in standard solvable
  form, there exists a nonzero semi-invariant $c \in \gs{U} =: R$ and
  an excellent quotient $\Spec(R_c) \to Y$. If $U$ is reduced then
  $\Spec(R_c)$ is nonempty, and the same is true for
  $U_c := U \cap \Spec(R_c)$. In any case, $U_c$ is a $G$-stable open
  subscheme of $U$ and of $\tilde{X}$. By
  \rref{2rExcellent}\eqref{2rExcellentG}, $U_c$ admits an excellent
  quotient by $G$ with fibers
  $F = \Spec(K[x_1 \upto x_k,y_1^{\pm 1} \upto y_r^{\pm 1}])$.

  Moreover, when we are working over an algebraically closed field,
  the following is true: If $X$ is an irreducible algebraic variety
  with an action of a connected solvable group $G$, then by
  \mycite[Theorem~2]{Popov:2016} there exists an affine $G$-variety
  $Y$ that is isomorphic (as a $G$-variety) to a dense open subset
  $U \subseteq X$. So as above $X$ has a $G$-stable dense open subset
  that admits an excellent quotient. Thus we recover a result of
  \mycite[Theorem~3]{Popov:2016}.
\end{rem}

\section{A converse} \label{6sConverse}

In this section we ask whether the assertions of \tref{5tMain} are
limited to actions of connected solvable groups. It is fairly clear
(and stated in \tref{6tConverse}\eqref{6tConverseA}) that most parts
of \tref{5tMain} extend to the case in which the $G$-orbits are in
fact orbits of a connected solvable subgroup. Extended in this way,
\tref{5tMain} actually has a converse, which is stated in
\tref{6tConverse}\eqref{6tConverseB}.

Since the proof of the converse requires a result of
\mycite{Borel:1985}, which is only proved over algebraically closed
fields, we assume for the rest of this paper that $K$ is an
algebraically closed field. We also assume that varieties and
algebraic groups are reduced. We need the following terminology to
state our result.

\begin{defi} \label{6dEssential}%
  A morphic action $G \times X \to X$ of a linear algebraic group on
  an affine variety is said to be \df{essentially solvable} if there
  exists a nonzero $d \in K[X]$ such that the open subset $X_d$
  where~$d$ does not vanish is $G$-stable, and for every $x \in X_d$
  the $G$-orbit of~$x$ coincides with the $R(G)$-orbit of~$x$, where
  $R(G)$ is the radical. If we can replace $R(G)$ by the unipotent
  radical $R_u(G)$, then the action is said to be \df{essentially
    unipotent}.
\end{defi}

\begin{theorem} \label{6tConverse}%
  Let $G \times X \to X$ be a morphic action of a linear algebraic
  group on an affine variety.
  \begin{enumerate}
  \item \label{6tConverseA} If the action is essentially solvable,
    then the assertions of \tref{5tMain}\eqref{5tMainA},
    \eqref{5tMainB} and~\eqref{5tMainD} hold, except that the element
    $c \in K[X]$ need not be a semi-invariant, but has the property
    that $X_c$ is $G$-stable. Moreover, we have $(K[X]_c)^G =
    (K[X]_c)^{R(G)}$.
  \item \label{6tConverseB} If there is a nonzero $c \in K[X]$ with
    $X_c$ $G$-stable such that all $G$-orbits in $X_c$ are isomorphic
    (as varieties) to a product $\AA^n_K \times T$ with $T$ a torus,
    then the action is essentially solvable. If $T$ is trivial for all
    orbits, the action is essentially unipotent.
  \end{enumerate}
\end{theorem}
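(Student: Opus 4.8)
The plan is to treat the two parts separately: I would derive \eqref{6tConverseA} from \tref{5tMain} applied to the radical, and establish \eqref{6tConverseB} by analysing the semisimple quotient $G/R(G)$.

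For \eqref{6tConverseA} I would first put the connected solvable group $R(G)$ into standard solvable form (possible since $K$ is algebraically closed) and apply \tref{5tMain}\eqref{5tMainA} to its action, obtaining a nonzero $R(G)$-semi-invariant $c_0 \in K[X]$ for which $\Spec(K[X]_{c_0}) \to \Spec\bigl((K[X]_{c_0})^{R(G)}\bigr)$ is an excellent quotient by $R(G)$. Setting $c := c_0 d$ with $d$ from \dref{6dEssential}, the open set $X_c = X_{c_0}\cap X_d$ is $R(G)$-stable because $c_0$ is an $R(G)$-semi-invariant, and hence $G$-stable, since on $X_d$ the $R(G)$-orbits coincide with the $G$-orbits. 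As $K[X]_c = (K[X]_{c_0})_d$, \rref{2rExcellent}\eqref{2rExcellentG} shows the quotient by $R(G)$ survives the localization, though $c$ need not be a $G$-semi-invariant. The quotient morphism $q\colon X_c \to Y := \Spec\bigl((K[X]_c)^{R(G)}\bigr)$ is a geometric quotient whose fibers are the $R(G)$-orbits, i.e.\ the $G$-orbits; hence $q$ is constant on $G$-orbits, and, all schemes being reduced, $q \circ \act = q \circ \pr_2$, so $q$ is $G$-invariant. This gives $(K[X]_c)^{R(G)} = q^*(K[Y]) \subseteq (K[X]_c)^G$, while the reverse inclusion is trivial. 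With $(K[X]_c)^G = (K[X]_c)^{R(G)}$ in hand, one checks the three conditions of \dref{2dExcellent} for $G$ (each following from the $R(G)$-version together with $R(G)\subseteq G$), so the same morphism is an excellent quotient by $G$; the assertions \tref{5tMain}\eqref{5tMainA} and \eqref{5tMainB} then transfer verbatim, and \eqref{5tMainD} follows from \lref{2lField}.

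For \eqref{6tConverseB}, after shrinking I may assume every $G$-orbit $O = G\cdot x$ in $X_c$ is isomorphic to $\AA^n_K \times T$. Writing $S := G/R(G)$ (semisimple) and $\pi\colon G \to S$, the normality of $R(G)$ makes $O$ a union of equidimensional $R(G)$-orbits permuted transitively by $G$, so the natural $R(G)$-quotient is the $S$-homogeneous space $B := O/R(G) \cong S/\pi(G_x)$; thus $G\cdot x = R(G)\cdot x$ if and only if $B$ is a single point. The quotient map yields a surjection $O \cong \AA^n_K \times T \twoheadrightarrow B$ whose fibers are the solvable-group orbits $R(G)\cdot x$, which are themselves of the form $\AA^k_K \times T'$ by the fiber description in \tref{5tMain}\eqref{5tMainA}. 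Since this applies to every orbit in $X_c$, once $B$ is shown to be trivial I obtain $G\cdot x = R(G)\cdot x$ throughout $X_c$, i.e.\ that the action is essentially solvable.

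The crux is to show that such a $B$ must be a single point, and this is the step that goes back to Hanspeter Kraft and requires the rigidity provided by \mycite{Borel:1985}. The idea is to compare the unit-group and Picard invariants of $\AA^n_K \times T$ with those forced on a homogeneous space of the semisimple group $S$: one has $X(S) = 0$, the units of $\AA^n_K \times T$ modulo $K^*$ form a free abelian group of rank $\dim T$, and its Picard group vanishes; pulling these back along the fibration and comparing with the corresponding data of the fiber $\AA^k_K \times T'$ (via \lref{5lKraft}) forces $B$ to be a point. The main obstacle is exactly this rigidity statement---ruling out a positive-dimensional semisimple homogeneous space as the base of a fibration dominated by $\AA^n_K \times T$---for which I would invoke Borel's theorem rather than a hands-on argument. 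Finally, for the unipotent refinement, suppose $T$ is trivial for all orbits, so $O \cong \AA^n_K$ and $K[O]^* = K^*$. Then a nontrivial torus in $R(G)$ would contribute a torus factor to its orbit and hence a nonconstant unit on $O$ (again via \lref{5lKraft}), which is impossible; so the maximal torus of $R(G)$ acts trivially on each orbit, giving $R(G)\cdot x = R_u(G)\cdot x$ and thus an essentially unipotent action.
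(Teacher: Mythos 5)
Part~\eqref{6tConverseA} of your proposal is essentially the paper's argument: apply \tref{5tMain} to the $R(G)$-action, observe that $X_c$ is $G$-stable because $G$- and $R(G)$-orbits agree on $X_d$, deduce $(K[X]_c)^G = (K[X]_c)^{R(G)}$ by reducedness, and check the conditions of \dref{2dExcellent} for $G$ using the $R(G)$-versions. One small slip: you apply \tref{5tMain} to the action on all of $X$ and then set $c := c_0 d$. If $X$ is reducible (the paper only assumes varieties reduced), the semi-invariant $c_0$ may vanish identically on $X_d$, and then $c_0 d = 0$ in the reduced ring $K[X]$, so you have not produced a nonzero $c$. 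The paper avoids this by applying \tref{5tMain} to the $R(G)$-action on $X_d$ itself, obtaining a nonzero $c \in K[X]_d$, and only then taking a numerator and multiplying by $d$; that way the product is automatically nonzero. With that change, your \eqref{6tConverseA} is fine.

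Part~\eqref{6tConverseB} has a genuine gap at its crux. The entire content of \eqref{6tConverseB} is the statement of \lref{5lKraft} (that a group acting transitively on $\AA^n_K \times T$ has transitive radical, resp.\ unipotent radical), and your reduction to ``$B := O/R(G)$ is a single point'' is only a reformulation of it; the proposed proof of that reformulation does not work. First, units and Picard groups cannot force $B$ to be a point: an affine homogeneous space of a semisimple group such as $\SL_2$ itself has only constant units and trivial $\operatorname{Pic}$, so the comparison along the fibration yields no contradiction even if one grants injectivity of pullback on $\operatorname{Pic}$ (itself unjustified, since the $R(G)$-fibration $O \to B$ need not be Zariski-locally trivial). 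Second, the theorem of \mycite{Borel:1985} is not a rigidity statement about bases of fibrations dominated by $\AA^n_K \times T$; it says that a connected group acting transitively on $\AA^n_K$ has transitive unipotent radical, and it cannot be applied to $B$. (Citing \lref{5lKraft} along the way, e.g.\ for the structure of the fibers, is moreover circular in a blind proof, since that lemma is exactly what is to be shown.) The paper's actual argument is genuinely different: from transitivity on $\AA^n_K \times T$ one shows every morphism $\AA^n_K \to T$ is constant, so the action induces maps $T \to T$; a density-of-torsion-points argument shows these are translations, yielding a surjective homomorphism $G \to T$, $g \mapsto g(1)$; then \mycite[Corollary~14.11]{Borel} gives that $R(G)$ still surjects onto $T$, and Borel's 1985 theorem applied to the kernel, which acts transitively on the fiber $\AA^n_K \times \{1\}$, finishes the proof. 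Your unipotent refinement is also flawed: a nontrivial torus orbit inside $O \cong \AA^n_K$ need not produce a nonconstant unit, because such orbits need not be closed ($\Gm$ scaling $\AA^1_K$ gives no unit on $\AA^1_K$), and the asserted conclusion that the maximal torus of $R(G)$ acts trivially on each orbit is false (the Borel subgroup of $\SL_2$ acting on $\AA^1_K$); the correct step is simply to apply Borel's 1985 theorem to the transitive action of $R(G)$ on $O \cong \AA^n_K$.
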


\begin{rem*}
  \begin{enumerate}
  \item Notice that the assertion of \tref{5tMain}\eqref{5tMainA}
    imply the hypothesis of part~\eqref{6tConverseB},
    so~\eqref{6tConverseB} contains the converse
    of~\eqref{6tConverseA}.
  \item If $G$ is connected and $X$ is irreducible, then every $d \in
    K[X]$ such that $X_d$ is $G$-stable has to be a semi-invariant
    (see \mycite[Theorem~3.1]{PV}). So in this case the elements~$c$
    and~$d$ from \dref{6dEssential} and \tref{6tConverse} are
    semi-invariants. \remend
  \end{enumerate}
  \renewcommand{\remend}{}
\end{rem*}

\begin{proof}[Proof of \tref{6tConverse}\eqref{6tConverseA}]
  Applying \tref{5tMain} to the action of $R(G)$ on $X_d$ yields a
  nonzero $R(G)$-semi-invariant $c \in K[X_d]$. For every $x \in X_d$
  and $\sigma \in G$ there exists $\tau \in R(G)$ with $\sigma(x) =
  \tau(x)$, so if $c(x) \ne 0$, then
  \[
  c\bigl(\sigma(x)\bigr) = c\bigl(\tau(x)\bigr) =
  \bigl(\tau^{-1}(c)\bigr)(x) = \chi(\tau)^{-1} c(x) \ne 0,
  \]
  with~$\chi$ the character belonging to~$c$. This shows that
  $(X_d)_c$ is $G$-stable. Choose a numerator of $c \in K[X]_d$,
  multiply it by~$d$ and replace~$c$ by the product. Then $(X_d)_c$ is
  replaced by $X_c$ and $K[X_d]_c$ is replaced by $K[X]_c$. So
  \tref{5tMain}\eqref{5tMainA} tells us that $X_c \to Y :=
  \Spec\bigl((K[X]_c)^{R(G)}\bigr)$ is an excellent quotient by $R(G)$
  with fibers $\Spec(K[x_1 \pto x_k,y_1^{\pm 1} \upto y_r^{\pm
    1}]$. Since the orbits of $G$ and $R(G)$ coincide on $X_c$, we
  have $(K[X]_c)^G = (K[X]_c)^{R(G)}$, and it is easy to check that
  the quotient is also an excellent quotient by $G$. Now also
  \tref{5tMain}\eqref{5tMainD} with $R(G)$ replaced by $G$ follows,
  and so does \tref{5tMain}\eqref{5tMainB}, which only makes
  statements about the ring and the invariant ring.
\end{proof}

Part~\eqref{6tConverseB} of \tref{6tConverse} follows directly from
the following lemma. The main ideas of the proof were shown to me by
Hanspeter Kraft. The last statement is due to \mycite{Borel:1985}.

\begin{lemma} \label{5lKraft}%
  Let $G$ be a linear algebraic group over an algebraically closed
  field $K$, acting transitively on the affine variety $F = \AA^n_K
  \times T$, where $T$ is a torus (regarded as a variety). Then also
  the radical $R(G)$ acts transitively on $F$. If $T$ is trivial, then
  the unipotent radical $R_u(G)$ acts transitively on $F$.
\end{lemma}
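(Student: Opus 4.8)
The plan is to deduce the first (torus) assertion from the last one, and to invoke \mycite{Borel:1985} for the last assertion itself. First I would reduce to the case that $G$ is connected: since $F=\AA^n_K\times T$ is irreducible and $G$ is transitive, the finitely many $G^\circ$-orbits all have dimension $\dim F$, so each is open, and by irreducibility there can be only one; thus $G^\circ$ is already transitive. As $R(G)=R(G^\circ)$ and $R_u(G)=R_u(G^\circ)$, I may assume $G$ connected.

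Next I would strip off the torus factor using units. The units of $K[F]=K[x_1\upto x_n,t_1^{\pm1}\upto t_r^{\pm1}]$ modulo $K^\ast$ form the free abelian group $X^\ast(T)$, on which the connected group $G$ must act trivially; hence each $t_j$ is a semi-invariant, $g\cdot t_j=\lambda_j(g)\,t_j$ with $\lambda_j\in X^\ast(G)$. The resulting homomorphism $\Lambda=(\lambda_1\upto\lambda_r)\colon G\to T$ makes the projection $\pr_T\colon F\to T$ equivariant for the translation action on $T$ through $\Lambda$. Transitivity of $G$ on $F$ then forces $\Lambda$ to be surjective and the stabilizer $G_1:=\ker\Lambda$ of $1\in T$ to act transitively on the fibre $\pr_T^{-1}(1)\cong\AA^n_K$; passing to $G_1^\circ$, which is transitive by the argument of the first step, I obtain a connected group acting transitively on affine space.

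Now I would apply the last statement of the lemma, i.e. \mycite{Borel:1985}, to conclude that $R_u(G_1^\circ)$ already acts transitively on $\AA^n_K=\pr_T^{-1}(1)$. Since $G_1^\circ$ is normal in $G$, the characteristic subgroup $R_u(G_1^\circ)$ is a connected unipotent normal subgroup of $G$, hence $R_u(G_1^\circ)\subseteq R_u(G)\subseteq R(G)$. Separately, $R(G)$ surjects onto $T$: the group $G/\bigl(R(G)\cdot G_1\bigr)$ is at once a quotient of the abelian group $G/G_1\cong T$ and of the perfect group $G/R(G)$, hence trivial, so $\Lambda(R(G))=T$. Combining the two facts, given a target $(x,t)\in F$ I pick $\rho\in R(G)$ with $\Lambda(\rho)$ carrying $1$ to $t$, which moves the base point into the fibre over $t$; the conjugate $\rho\,R_u(G_1^\circ)\,\rho^{-1}\subseteq R(G)$ then acts transitively on that fibre and reaches $(x,t)$. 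Thus $R(G)$ is transitive on $F$. The last statement is recovered by taking $T$ trivial, so that $\Lambda$ and $G_1$ disappear and the claim is exactly \mycite{Borel:1985}.

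The main obstacle is precisely the affine-space case: that a transitive action on $\AA^n_K$ forces transitivity of the unipotent radical. This is genuinely hard and characteristic-independent, which is why I would cite \mycite{Borel:1985} rather than attempt a proof. Everything else is the bookkeeping above, whose only delicate points are the equivariance of $\pr_T$ and the perfect-versus-abelian dichotomy giving $\Lambda(R(G))=T$, both valid in arbitrary characteristic.
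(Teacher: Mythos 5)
Your proof is correct, and its skeleton coincides with the paper's: reduce to connected $G$ via the open-orbit argument, show the action descends to $T$ as translations through a homomorphism $G\to T$, prove that the radical surjects onto $T$, apply \mycite{Borel:1985} to the kernel acting transitively on the fibre $\AA^n_K$, and compose. Where you diverge is in two local steps. First, you obtain the homomorphism $\Lambda$ structurally, from the fact that the unit group of $K[F]$ modulo $K^\ast$ is the lattice $X^\ast(T)$ on which a connected group acts trivially, so the $t_j$ are semi-invariants; the paper instead argues by hand, showing $\pr_2\bigl(g(v,t)\bigr)$ is independent of $v$ (morphisms $\AA^n_K\to T$ are constant), that this defines a $G$-action on $T$, and then using a density argument with roots of unity plus connectedness to force $g(t)=g(1)t$. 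Your route is slicker but silently invokes Rosenlicht-type facts — that the induced action of $G$ on the discrete lattice $\mathcal{O}(F)^\ast/K^\ast$ is trivial, and that the resulting multipliers $\lambda_j\colon G\to K^\ast$ are algebraic characters — which deserve at least a citation (they are standard, e.g. in Popov--Vinberg), whereas the paper's roots-of-unity computation is self-contained and proves exactly these facts in the case at hand. Second, for $\Lambda(R(G))=T$ you give a self-contained perfect-versus-abelian argument (using that $G/R(G)$ is connected semisimple, hence perfect), where the paper simply cites \mycite[Corollary~14.11]{Borel}; both are fine, and yours trades a citation for three lines of group theory. Everything else — the transitivity of $\ker\Lambda$ on the fibre, the passage to its identity component, the normality chain giving $R_u(G_1^\circ)\subseteq R_u(G)\subseteq R(G)$, and the final translation-then-fibre composition — matches the paper's proof step for step, including the observation that the case of trivial $T$ is exactly the statement of \mycite{Borel:1985}.
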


\begin{proof}
  A short argument shows that since $F$ is irreducible, the identity
  component $G^0$ acts transitively on $F$, so we may assume $G$ to be
  connected.

  Choose $v \in \AA_K^n$ and
  define $\map{\sigma}{G \times T}{T}$ as the composition
  \[
  G \times T \xlongrightarrow{(g,t) \mapsto (g,v,t)} G \times \AA_K^n
  \times T \xlongrightarrow{\act} \AA_K^n \times T
  \xlongrightarrow{\pr_2} T.
  \]
  This does not depend on the choice of~$v$, since for fixed $g \in G$
  and $t \in T$, mapping $v \in \AA_K^n$ to $\pr_2\bigl(g(v,t)\bigr)$
  yields a morphism $\AA_K^n \to T$. But this must be constant, since
  the induced homomorphism $K[T] = K[x_1,x_1^{-1} \upto x_l,x_l^{-1}]
  \to K[\AA^n_K] = K[y_1 \upto y_n]$ maps invertible elements to
  invertible elements, so it maps the $x_i$ to constants. The
  map~$\sigma$ defines a $G$-action on $T$ since for $g,h \in G$ and
  $t \in T$ we have
  \[
  \sigma(g h,t) = \pr_2\bigl(g(h(v,t))\bigr) =
  \pr_2\bigl(g(v',t')\bigr)
  \]
  with $(v',t') := h(v,t)$, and
  \[
  \sigma\bigl(g,\sigma(h,t)\bigr) = \sigma(g,t') =
  \pr_2\bigl(g(v,t')\bigr),
  \]
  so the independence of the choice of~$v$ implies $\sigma(g h,t) =
  \sigma\bigl(g,\sigma(h,t)\bigr)$. The transitivity of the action on
  $F$ implies the transitivity of the action on $T$.

  Let $\map{\phi}{T}{T}$ be an automorphism. Then for $t = (t_1 \upto
  t_l) \in T$ with $0 \ne t_i \in K$ we have
  \[
  \phi(t) = \Bigl(a_1 \prod_{j=1}^l t_j^{e_{1,j}} \upto a_l
    \prod_{j=1}^l t_j^{e_{l,j}}\Bigr)
  \]
  with $0 \ne a_i \in K$ and $e_{i,j} \in \ZZ$. Fix a~$t$ whose
  components~$t_i$ are $m$th roots of unity. Then the same follows for
  the components of $\phi(t) \phi(1)^{-1}$ (with component-wise
  multiplication and $1 := (1 \upto 1)$), and therefore also for
  $\phi(t) \phi(1)^{-1} t^{-1}$. This leaves only finitely many
  possibilities for $\phi(t) \phi(1)^{-1} t^{-1}$. So since $G$ is
  connected, the morphism $G \to T$, $g \mapsto g(t) g(1)^{-1} t^{-1}$
  is constant, so $g(t) g(1)^{-1} t^{-1} = 1$ for all $g \in G$. This
  holds for all $t \in T$ whose components are $m$th roots of unity
  for some~$m$. But since these~$t$ form a dense subset of $T$, it
  extends to all $t \in T$. So $g(t) = g(1) t$ for $g \in G$, $t \in
  T$. This implies that the morphism $\mapl{\pi}{G}{T}{g}{g(1)}$ is a
  homomorphism of algebraic groups. Since $G$ acts transitively on
  $T$, it is surjective. So by \mycite[Corollary~14.11]{Borel}, also
  the restriction $\pi|_{_{R(G)}}$ is surjective.

  It is straightforward to check that for $H := \ker(\pi) \subseteq G$
  the composition
  \[
  H \times \AA^n_K \xlongrightarrow{(h,v) \mapsto (h,v,1)} H \times
  \AA^n_K \times T \xlongrightarrow{\act} \AA^n_K \times T
  \xlongrightarrow{\pr_1} \AA^n_K
  \]
  defines an action. This action is transitive since for
  $v,v' \in \AA^n_K$ there is $g \in G$ with $g(v,1) = (v',1)$, so
  $g \in H$. Therefore also $H^0$ acts transitively on $\AA^n_K$, and
  from this it follows by \mycite{Borel:1985} that the same is true
  for $R_u(H)$. Now let $(v,t) \in \AA^n_K \times T$. By the above
  there exists $g \in R(G)$ such that $\pi(g) = t$, which implies
  $g^{-1}(t) = 1$ and $g^{-1}(v,t) = (v',1)$ with $v' \in \AA^n_K$. If
  $T$ is trivial we may choose~$g$ to be the identity. There also
  exists $h \in R_u(H)$ such that $h(v',1) = (0,1)$, so
  $(h g^{-1}) (v,t) = (0,1)$. Since
  $R_u(H) \subseteq R_u(G) \subseteq R(G)$, the claim follows.
\end{proof}

\newcommand{\SortNoop}[1]{}

\end{document}